\documentclass[a4paper]{amsart}
\usepackage{graphicx}
\usepackage{amssymb}
\usepackage{amsmath}
\usepackage{amsthm}
\usepackage{amscd}
\usepackage[all,2cell]{xy}

\UseAllTwocells \SilentMatrices
\newtheorem{thm}{Theorem}[section]

\newtheorem{cor}[thm]{Corollary}
\newtheorem{lem}[thm]{Lemma}
\newtheorem{exm}[thm]{Example}

\newtheorem{prop}[thm]{Proposition}
\theoremstyle{definition}
\newtheorem{defn}[thm]{Definition}
\theoremstyle{remark}
\newtheorem{rem}[thm]{\bf Remark}
\numberwithin{equation}{section}

\begin{document}
\title[Monadicity theorem and weighted projective lines of tubular type]
{Monadicity theorem and weighted projective lines of tubular type}

\author[Jianmin Chen, Xiao-Wu Chen, Zhenqiang Zhou] {Jianmin Chen, Xiao-Wu Chen$^*$, Zhenqiang Zhou}

\thanks{$^*$ the corresponding author}
\subjclass[2010]{16W50, 14A22, 14H52}
\date{\today}
\keywords{weighted projective line, elliptic curve, equivariantization, monad, graded ring}%
\maketitle

\dedicatory{}%
\commby{}%

\begin{abstract}
We formulate a version of Beck's monadicity theorem for abelian categories, which is applied to the  equivariantization of abelian categories with respect to a finite group action.  We prove that the equivariantization is compatible with the construction of quotient abelian categories by  Serre subcategories. We prove that the equivariantization of the graded module category over a graded ring is equivalent to the  graded module category over the same ring but with a different grading.  We deduce from these results two equivalences between the category of (equivariant) coherent sheaves on a weighted projective line of tubular type and that on an elliptic curve, where the acting groups are cyclic and the two equivalences are somehow adjoint to each other.
\end{abstract}

\section{Introduction}

The close relationship between weighted projective lines of tubular type and elliptic curves is known to experts since the creation of weighted projective lines in \cite{GL87}. Roughly speaking, the category of coherent sheaves on a weighted projective line of tubular type is equivalent to the category of equivariant coherent sheaves on an elliptic curve with respect to a finite abelian group action. This result is  implicitly  exploited in \cite{Po} using ramified elliptic Galois coverings over the field of complex numbers; compare \cite{Ploog}.  Moreover, the somehow converse result holds true by \cite{Len, Lentalk}: the category of coherent sheaves on an elliptic curve is equivalent to the category of equivariant coherent sheaves on a weighted projective line of tubular type with respect to a different finite abelian group action, which contains the action given by the Auslander-Reiten translation.

The goal of this paper is to re-exploit the above mentioned equivalences in an explicit form. We emphasize that our treatment of the four tubular types is in a uniform  manner, and that we deduce these equivalences from general results on the equivariantization of abelian categories. These general results are direct applications of the famous Beck's monadicity theorem which characterizes the module category over a monad; see \cite[Chapter VI]{McL}. We mention that monads appear naturally, since the category of equivariant objects is isomorphic to the module category over a certain monad; see \cite{Ch,El2014}.

Let us describe the content of this paper.  In Section 2, we collect some basic facts on monads and modules over monads. We formulate a version of Beck's monadicity theorem for abelian categories, which is convenient for applications; see Theorem \ref{thm:Beck}. We give a self-contained proof to Theorem \ref{thm:Beck}. In Section 3, we recall the notions of a group action on a category and the category of equivariant objects. The observation we need is that the category of equivariant objects is isomorphic to the category of modules over a certain monad; see Proposition \ref{prop:monadic}. In Section 4, we apply Theorem \ref{thm:Beck} to prove that the equivariantization of an abelian category is compatible with quotient abelian categories by Serre subcategories; indeed, a slightly more general result on the category of modules over an exact monad is obtained; see Proposition \ref{prop:app1} and Corollary \ref{cor:equiv}. In Section 5, we  prove that the equivarianzation of the graded module category over a graded ring with respect to a degree-shift action is equivalent to the graded module category over the same ring but with a coarser grading; see Proposition \ref{prop:app2}. This enables us to recover \cite[Theorem 6.4.1]{NV} on the graded module category over the corresponding graded group ring; see Corollary \ref{cor:NV}. For the case that the acting group is finite abelian, we generalize slightly a result in \cite{Len} and prove that the equivariantization of the graded module category over a graded ring with respect to a twisting action is equivalent to the graded module category over the same ring but with a refined grading; see Proposition \ref{prop:recover}.

In Section 6, we recall some basic facts on the homogeneous coordinate algebra of a weighted projective line. We prove that the quotient graded module  category over a restriction subalgebra of the homogeneous coordinate algebra is equivalent to a quotient  of the category of coherent sheaves on the weighted projective line by  a certain Serre subcategory; see Proposition \ref{prop:eff}. This motivates the notion of an effective subgroup of the grading group for the homogeneous coordinate algebra; see Definition \ref{defn:1}. For these subgroups, Proposition \ref{prop:eff} claims an equivalence between the quotient graded module category over the restriction subalgebra and the category of coherent sheaves on the weighted projective line.

The final section is devoted to the study of the relationship between weighted projective lines of tubular type and elliptic plane curves. We prove that the homogeneous coordinate algebra of an elliptic plane curve is isomorphic to the restriction subalgebra of the homogeneous coordinate algebra of the weighted projective line with respect to a suitable effective subgroup; see Theorem \ref{thm:R-S}. Applying Theorem \ref{thm:R-S} and the results in Sections 4 and 5,  we  prove the main result: the equivariantization of the category of coherent sheaves on a weighted projective line of tubular type with respect to a certain degree-shift action is equivalent to the category of coherent sheaves on an elliptic plane curve; the equivariantization of the category of coherent sheaves on an elliptic plane curve with respect to a certain twisting action is equivalent to the category of coherent sheaves on a weighted projective line of tubular type; see Theorem \ref{thm:main}. We mention that here the acting groups are cyclic, and that we need two different kinds of group actions to have these two equivalences. On the other hand, the two equivalences obtained are somehow adjoint to each other; see Remark \ref{rem:main}(1).

We emphasize that the two equivalences in Theorem \ref{thm:main} are contained in \cite{GL87} and \cite{Len, Lentalk}, which are presented in a slightly different form with sketched proofs;  compare \cite{Hille} and Remark \ref{rem:main}(3). As pointed out in \cite{GL87}, these equivalences link the classification of indecomposable coherent sheaves on elliptic curves in \cite{At} with the classification of indecomposable modules of tubular algebras in \cite{Rin}.

\section{Modules over monads}

In this section, we recall some basic facts from \cite[Chapter VI]{McL}  on monads and modules over monads. We formulate a version of Beck's monadicity theorem for abelian categories, for which we provide a self-contained proof.

Let $\mathcal{C}$ be a category. Recall that a \emph{monad} on  $\mathcal{C}$ is a triple $(M, \eta, \mu)$ consisting of an endofunctor $M\colon \mathcal{C}\rightarrow \mathcal{C}$ and two natural transformations, the \emph{unit} $\eta\colon{\rm Id}_\mathcal{C}\rightarrow M$  and the \emph{multiplication} $\mu \colon M^2\rightarrow M$, subject to the relations $\mu \circ M\mu =\mu \circ \mu M$ and $\mu\circ M\eta={\rm Id}_M=\mu\circ \eta M$. We sometimes suppress the unit and the multiplication, and denote the monad simply by $M$.

Monads arise naturally in adjoint pairs. Assume that $F\colon \mathcal{C}\rightarrow \mathcal{D}$ is a functor which admits a right adjoint $U\colon \mathcal{D}\rightarrow \mathcal{C}$. We denote by $\eta\colon {\rm Id}_\mathcal{C}\rightarrow UF$ the unit and $\epsilon\colon FU\rightarrow {\rm Id}_\mathcal{D}$ the counit; they satisfy $\epsilon F\circ F\eta ={\rm Id}_F$ and $U\epsilon\circ \eta U={\rm Id}_U$. We denote this adjoint pair on $\mathcal{C}$ and $\mathcal{D}$ by the quadruple $(F, U; \eta, \epsilon)$. We suppress the unit and the counit when they are clear from the context.

Each adjoint pair $(F, U;\eta, \epsilon)$ on two categories $\mathcal{C}$ and $\mathcal{D}$ defines a monad $(M, \eta, \mu)$ on $\mathcal{C}$, where  $M=UF\colon \mathcal{C}\rightarrow \mathcal{C}$ and $\mu=U\epsilon F\colon M^2=UFUF\rightarrow U{\rm Id}_\mathcal{D}F=M$. The resulting monad
$(M, \eta, \mu)$ on $\mathcal{C}$ is said to be \emph{defined} by the adjoint pair $(F, U; \eta, \epsilon)$. Indeed, as recalled now every monad is defined by some adjoint pair.

For a monad $(M, \eta, \mu)$ on $\mathcal{C}$, an $M$-\emph{module} is a pair $(X, \lambda)$ consisting of an object $X$ in $\mathcal{C}$ and a morphism $\lambda \colon M(X)\rightarrow X$ subject to the conditions $\lambda\circ M\lambda =\lambda\circ \mu_X$ and $\lambda\circ \eta_X={\rm Id}_X$;  the object $X$ is said to be the \emph{underlying object} of the module. A morphism  $f\colon (X, \lambda)\rightarrow (X', \lambda')$  between two $M$-modules is a morphism $f\colon  X\rightarrow X'$ in $\mathcal{C}$ satisfying $f\circ \lambda=\lambda'\circ M(f)$.  This gives rise to the category  $M\mbox{-Mod}_\mathcal{C}$ of $M$-modules; moreover, we have the \emph{forgetful functor} $U_M\colon M\mbox{-Mod}_\mathcal{C}\rightarrow \mathcal{C}$. The functor $U_M$ is faithful.

We observe that each object $X$ in $\mathcal{C}$ yields an $M$-module $F_M(X)=(M(X), \mu_X)$, the \emph{free $M$-module} generated by $X$. Indeed, this gives rise to the \emph{free module functor}  $F_M\colon \mathcal{C}\rightarrow M\mbox{-Mod}_\mathcal{C}$ sending $X$ to the free module $F_M(X)$, and a morphism $f\colon X\rightarrow Y$ to the morphism
$M(f)\colon F_M(X)\rightarrow F_M(Y)$.

We have the adjoint pair $(F_M, U_M; \eta, \epsilon_M)$ on $\mathcal{C}$ and $M\mbox{-Mod}_\mathcal{C}$, where for an
$M$-module $(X, \lambda)$, the counit $\epsilon_M$ is given such that $$(\epsilon_M)_{(X, \lambda)}=\lambda\colon F_M U_M(X,\lambda)=(M(X), \mu_X)\longrightarrow (X, \lambda).$$ We observe that this adjoint pair $(F_M, U_M; \eta, \epsilon_M)$ defines the given monad $(M, \eta, \mu)$.

The above adjoint pair $(F_M, U_M; \eta, \epsilon_M)$ enjoys the following universal property: for any adjoint pair $(F, U; \eta, \epsilon)$ on $\mathcal{C}$ and $\mathcal{D}$ that defines the monad $M$, there is a unique functor $$K\colon \mathcal{D}\longrightarrow M\mbox{-Mod}_\mathcal{C}$$ satisfying $KF=F_M$ and $U_MK=U$; see \cite[VI.3]{McL}. This unique functor $K$ will be referred as the \emph{comparison functor} associated to the  adjoint pair $(F, U; \eta, \epsilon)$. For its construction, we have
 \begin{align}\label{equ:K}
 K(D)=(U(D), U\epsilon_D), \quad K(f)=U(f)
\end{align}
for an object $D$ and a morphism $f$ in $\mathcal{D}$. Here, we observe that $M=UF$ and that $(U(D), U\epsilon_D)$ is an $M$-module.

Following \cite[VI.3]{McL} the adjoint pair $(F, U)$ is \emph{monadic} (\emph{resp.} \emph{strictly monadic}) provided that the associated comparison functor $K\colon \mathcal{D}\rightarrow M\mbox{-Mod}_\mathcal{C}$ is an equivalence (\emph{resp.} an isomorphism). In these cases, we might identify $\mathcal{D}$ with $M\mbox{-Mod}_\mathcal{C}$.

The famous monadicity theorem of Beck gives intrinsic characterizations on (strictly) monadic adjoint pairs; see \cite[VI.7]{McL}. However, the conditions in Beck's monadicity theorem on coequalizers seem rather technical.

 In the following result, we formulate a version of Beck's monadicity theorem for abelian categories, which is quite convenient for applications.

\begin{thm}\label{thm:Beck}
Let $F\colon \mathcal{A}\rightarrow \mathcal{B}$ be an additive functor on two abelian categories. Assume that $F$ admits a right adjoint $U\colon \mathcal{B}\rightarrow \mathcal{A}$ which is exact. Denote by $M$ the defined monad on $\mathcal{A}$. Then the comparison functor $K\colon \mathcal{B}\rightarrow M\mbox{-{\rm Mod}}_\mathcal{A}$ is an equivalence if and only if the functor $U\colon \mathcal{B}\rightarrow \mathcal{A}$ is faithful.
\end{thm}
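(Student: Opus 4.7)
\emph{Plan.} The ``only if'' direction is immediate: the forgetful functor $U_M\colon M\mbox{-Mod}_\mathcal{A}\to\mathcal{A}$ is faithful and $U=U_M\circ K$, so if $K$ is an equivalence, then $U$ is faithful. For the converse, my plan is to build an explicit quasi-inverse $L\colon M\mbox{-Mod}_\mathcal{A}\to\mathcal{B}$. I first observe that, since $U$ is additive, exact, and faithful, $U$ reflects zero objects (indeed, if $U(X)=0$ then $U({\rm id}_X)=0$ forces ${\rm id}_X=0$), and therefore reflects both isomorphisms and exact sequences; moreover $K$ is automatically faithful since $U_M K = U$ is. Then I define
\[
L(X,\lambda)\;=\;{\rm coker}_\mathcal{B}\bigl(F(\lambda)-\epsilon_{F(X)}\colon FUF(X)\to F(X)\bigr),
\]
with cokernel map $p_{(X,\lambda)}\colon F(X)\to L(X,\lambda)$; functoriality of $L$ is routine from the universal property of cokernels.

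The heart of the argument is to show that the natural comparison $KL\to{\rm id}$ is an isomorphism. Applying the exact functor $U$ to the defining cokernel in $\mathcal{B}$ gives in $\mathcal{A}$ the exact sequence
\[
M^{2}(X)\xrightarrow{M(\lambda)-\mu_X} M(X)\xrightarrow{U(p_{(X,\lambda)})} UL(X,\lambda)\to 0.
\]
Using $\lambda\eta_X={\rm id}_X$, the monad axiom $\mu_X\eta_{M(X)}={\rm id}_{M(X)}$, and the naturality identity $M(\lambda)\circ\eta_{M(X)}=\eta_X\circ\lambda$, one sees that $\lambda\colon M(X)\to X$ is a split coequalizer of $M(\lambda)$ and $\mu_X$; hence the induced arrow $\bar\lambda\colon UL(X,\lambda)\to X$ satisfying $\bar\lambda\circ U(p_{(X,\lambda)})=\lambda$ is an isomorphism in $\mathcal{A}$. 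I then verify that $\bar\lambda$ intertwines the $M$-action $U\epsilon_{L(X,\lambda)}$ on $UL(X,\lambda)$ with $\lambda$ on $X$: both sides agree after precomposition with the epimorphism $M(U(p_{(X,\lambda)}))$ --- which is epi because the left adjoint $F$ preserves cokernels and $U$ is exact --- by invoking naturality of $\epsilon$ and the module axiom $\lambda M(\lambda)=\lambda\mu_X$. This yields $KL\cong{\rm id}$.

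For the other side, given $B\in\mathcal{B}$ the counit $\epsilon_B\colon FUB\to B$ coequalizes $F(U\epsilon_B)$ and $\epsilon_{FUB}$ by naturality of $\epsilon$, hence factors uniquely through a map $\bar\epsilon_B\colon LKB\to B$; applying $U$ identifies $U\bar\epsilon_B$ with the isomorphism constructed in the previous paragraph for $(X,\lambda)=KB$, so $U\bar\epsilon_B$ is an isomorphism, and then $\bar\epsilon_B$ itself is an isomorphism because $U$ reflects them. I expect the main obstacle to be the module-structure compatibility underlying $KL\cong{\rm id}$, since it simultaneously invokes exactness of $U$, preservation of cokernels by $F$, and naturality of $\epsilon$; once this is in place, the $LK$-side follows almost formally, and the hypothesis that $U$ is faithful enters precisely there to reflect the isomorphism.
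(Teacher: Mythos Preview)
Your argument is correct. The split-coequalizer verification for $\lambda$ is standard and your check that $\bar\lambda$ is an $M$-module map is clean; the reflection of isomorphisms by an exact faithful $U$ then finishes $LK\cong{\rm id}$. One small point you leave implicit is the \emph{naturality} of the isomorphisms $KL\cong{\rm id}$ and $LK\cong{\rm id}$, but this follows routinely from the universal property of cokernels once you note that both $\bar\lambda$ and $\bar\epsilon$ are defined by factoring canonical maps through $p_{(-)}$.

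Your route is different from both proofs in the paper. The paper's first proof simply observes that an exact faithful $U$ preserves and reflects coequalizers (Lemma~\ref{lem:cok}) and then invokes Beck's theorem as a black box; its second proof sets up a relative-homological framework (Proposition~\ref{prop:equi}) with $\mathcal{X}={\rm Im}\,F$ and verifies three approximation-style conditions. You instead unpack the quasi-inverse $L(X,\lambda)={\rm coker}\bigl(F(\lambda)-\epsilon_{F(X)}\bigr)$ explicitly---this is essentially the construction hidden inside the proof of Beck's theorem, specialized to the abelian/additive setting where coequalizers become cokernels of differences. Your approach is more hands-on and entirely self-contained (no appeal to \cite{McL}), at the cost of a few more verifications; the paper's first proof is shorter but relies on the external statement, while the second proof is designed to showcase a reusable relative-homological lemma.
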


The ``only if" part of  Theorem \ref{thm:Beck} is trivial, since $U=U_MK$ and the forgetful functor $U_M$ is faithful. We will present two proofs for the ``if" part. The first one is an application of the original version of Beck's monadicity theorem, while the second one is more self-contained and seems to be of independent interest.

 The following observation is well known.

\begin{lem}\label{lem:cok}
Let $F\colon \mathcal{A}\rightarrow \mathcal{B}$ be an exact functor on two abelian categories, which is faithful.  Then $F$ preserves and reflects cokernels, that is,  for two morphisms $f\colon X\rightarrow Y$ and $c\colon Y\rightarrow C$ in $\mathcal{A}$, $c$ is a cokernel of $f$ if and only if $F(c)$ is a cokernel of $F(f)$.
\end{lem}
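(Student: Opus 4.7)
The plan is to deduce everything from two standard facts about exact faithful functors between abelian categories: an exact functor automatically preserves cokernels, and an exact faithful functor reflects isomorphisms (equivalently, reflects zero objects).

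First I would dispose of the preservation direction: by exactness, $F$ carries short exact sequences to short exact sequences, and in particular carries a cokernel diagram $X \xrightarrow{f} Y \xrightarrow{c} C \to 0$ with $c = \mathrm{coker}(f)$ to a cokernel diagram in $\mathcal{B}$. So if $c$ is a cokernel of $f$, then $F(c)$ is a cokernel of $F(f)$.

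For the reflection direction, suppose $F(c)$ is a cokernel of $F(f)$. Form the actual cokernel $c' \colon Y \to C'$ of $f$ in $\mathcal{A}$. Since $c \circ f = 0$ (which one sees by faithfulness applied to $F(c \circ f) = F(c) \circ F(f) = 0 = F(0)$), the universal property of $c'$ produces a unique $u \colon C' \to C$ with $c = u \circ c'$. Applying $F$, the morphism $F(u)$ is the unique comparison between the two cokernels $F(c')$ (a cokernel of $F(f)$ by the preservation half) and $F(c)$ of $F(f)$, hence $F(u)$ is an isomorphism. It then suffices to show $u$ itself is an isomorphism.

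The key remaining step is the lemma that an exact faithful additive functor reflects isomorphisms. I would prove this by first observing that $F$ reflects zero objects: if $F(X)=0$ then $F(\mathrm{Id}_X) = \mathrm{Id}_{F(X)} = 0 = F(0_X)$, and faithfulness forces $\mathrm{Id}_X = 0$, so $X = 0$. Now given that $F(u)$ is an isomorphism, exactness of $F$ gives $F(\ker u) \cong \ker F(u) = 0$ and $F(\mathrm{coker}\, u) \cong \mathrm{coker}\, F(u) = 0$, so $\ker u = 0$ and $\mathrm{coker}\, u = 0$, making $u$ an isomorphism. Therefore $c = u \circ c'$ is a cokernel of $f$, completing the proof. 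No step presents a genuine obstacle; the only mild point is making sure the reduction to ``$F$ reflects isomorphisms'' is carried out cleanly, since that is the place where both hypotheses (exactness and faithfulness) are used simultaneously.
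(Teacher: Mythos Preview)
Your proof is correct. The paper's argument for the reflection direction is organized differently: rather than forming the comparison map $u\colon C'\to C$ and invoking that an exact faithful functor reflects isomorphisms, the paper observes directly that exactness of $F$ implies $F$ preserves the cohomology of any complex, so faithfulness forces $F$ to reflect exactness at each spot. Applying this to the two complexes $X\xrightarrow{f} Y\xrightarrow{c} C$ and $Y\xrightarrow{c} C\to 0$ yields that $c$ is epic with $\mathrm{Ker}\,c=\mathrm{Im}\,f$, hence a cokernel of $f$. Your route is more modular --- the auxiliary lemma ``exact faithful $\Rightarrow$ reflects isomorphisms'' is reusable and the reduction via the universal property is clean --- while the paper's route avoids introducing the auxiliary object $C'$ and reads the conclusion straight off the exactness of the relevant sequences. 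Both are standard and of comparable length.
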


\begin{proof}
The ``only if" part follows from the right exactness of $F$.

 For the ``if" part, we assume that $F(c)$ is a cokernel of $F(f)$. In particular, $F(c\circ f)=0$, and thus $c\circ f=0$ since $F$ is faithful. By the exactness of $F$, we have the following fact: for any complex $\xi\colon X\rightarrow Y\rightarrow Z$ in $\mathcal{A}$ with cohomology $H$ at $Y$,  the complex $F(\xi)\colon F(X)\rightarrow F(Y)\rightarrow F(Z)$ in $\mathcal{B}$ has cohomology $F(H)$ at $F(Y)$. Since $F$ is faithful, we infer that the complex $F(\xi)$ is exact at $F(Y)$, that is, $F(H)\simeq 0$, if and only if $H\simeq 0$, that is, the complex $\xi$ is exact at $Y$. We apply this fact to the complexes $X\stackrel{f}\rightarrow Y\stackrel{c}\rightarrow C$ and $Y\stackrel{c}\rightarrow C\rightarrow 0$. We infer that $c$ is a  cokernel of $f$.
\end{proof}

\vskip 5pt

\noindent \emph{The first proof of the ``if" part of Theorem \ref{thm:Beck}.}\quad Recall that a coequalizer of two parallel morphisms in an additive category equals a cokernel of their difference.  Then  we apply Lemma \ref{lem:cok} to the functor $U$, and infer that $U$ preserves and reflects coequalizers. Then the comparison functor $K$ is an equivalence by the ``equivalence" version of Beck's monadicity theorem; see \cite[VI.7, Exercise 6]{McL}. \hfill $\square$

\vskip 5pt

The second proof is an application of a slightly more general result, which seems to be standard in relative homological algebra.

 Let $\mathcal{X}$ be a full subcategory of a category $\mathcal{C}$. Let $C$ be an object in $\mathcal{C}$. Recall that a \emph{right $\mathcal{X}$-approximation} of $C$ is a morphism $f\colon X\rightarrow C$ with $X\in \mathcal{X}$ such that any morphism $t\colon T\rightarrow C$ with $T\in \mathcal{X}$ factors through $f$, that is, there exists a morphism $t'\colon T\rightarrow X$ with $t=f\circ t'$; compare \cite[Section 3]{AS}. In case that $\mathcal{C}$ is abelian,
an \emph{$\mathcal{X}$-presentation} of $C$  means an exact sequence $X_1\stackrel{g}\rightarrow X_0\stackrel{f}\rightarrow C\rightarrow 0$ such that $f$ is a right $\mathcal{X}$-approximation of $C$ and that the induced morphism $\bar{g}\colon X_1\rightarrow {\rm Ker}\; f$  is a right $\mathcal{X}$-approximation of ${\rm Ker}\; f$.

For a functor $F\colon \mathcal{C}\rightarrow \mathcal{D}$ and a full subcategory $\mathcal{X}\subseteq \mathcal{C}$,  we denote by $F(\mathcal{X})$ the full subcategory of $\mathcal{D}$ consisting of objects that are isomorphic to  $F(X)$ for some object $X$ in $\mathcal{X}$. In particular, we write ${\rm Im}\; F=F(\mathcal{C})$, the \emph{essential image} of $F$.

\begin{exm}\label{exm:1}
Let $(F, U; \eta, \epsilon)$ be an adjoint pair on two categories $\mathcal{C}$ and $\mathcal{D}$. Then for any object $D$ in $\mathcal{D}$, the counit $\epsilon_D\colon FU(D)\rightarrow D$ is a right ${\rm Im}\; F$-approximation. Indeed, any morphism $t\colon F(C)\rightarrow D$ factors through $\epsilon_D$ as $t=\epsilon_D\circ F(U(t)\circ \eta_C)$.

Let us state a special case explicitly. For a monad $M=(M, \eta, \mu)$ on $\mathcal{C}$, we apply the above to the adjoint pair $(F_M, U_M; \eta, \epsilon_M)$. Then for any $M$-module $(X, \lambda)$, the counit
\begin{align}\label{equ:epi}
(\epsilon_M)_{(X, \lambda)}=\lambda\colon F_MU_M(X, \lambda)=(M(X), \mu_X)\longrightarrow (X, \lambda)
\end{align}
is a right ${\rm Im}\; F_M$-approximation  of $(X, \lambda)$ in $M\mbox{-{\rm Mod}}_\mathcal{C}$; it is an epimorphism, since $\lambda\circ \eta_X={\rm Id}_X$.
\end{exm}

The following general result seems to be standard in relative homological algebra, whose proof is also standard.

\begin{prop}\label{prop:equi}
Let $F\colon \mathcal{A}\rightarrow \mathcal{B}$ be a right exact functor on two abelian categories. Let $\mathcal{X}$ be a full subcategory of $\mathcal{A}$. We assume that the following conditions are satisfied.
\begin{enumerate}
\item[(i)] For any object $C$ in $\mathcal{A}$, there exists an epimorphism $f\colon X\rightarrow C$ which is a right $\mathcal{X}$-approximation such that $F(f)$ is a right $F(\mathcal{X})$-approximation of $F(C)$.
\item[(ii)] The restricted functor $F|_\mathcal{X}\colon \mathcal{X}\rightarrow \mathcal{B}$ is fully faithful.
\end{enumerate}
Then the functor $F\colon \mathcal{A}\rightarrow \mathcal{B}$ is fully faithful. Assume that in addition the following condition is satisfied.
\begin{enumerate}
\item[(iii)] For any object $B$ in $\mathcal{B}$, there is an epimorphism $F(A)\rightarrow B$ for some object $A$ in $\mathcal{A}$.
\end{enumerate}
Then the functor $F\colon \mathcal{A} \rightarrow \mathcal{B}$ is an equivalence.
\end{prop}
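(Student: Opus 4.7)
My plan is to prove fully faithfulness by comparing $\mathcal{X}$-presentations via the five lemma, and then to derive essential surjectivity from (iii) combined with the right exactness of $F$. For fully faithfulness, fix $A, A'$ in $\mathcal{A}$. Applying (i) to $A$ yields an epimorphism $f\colon X_0 \to A$ with the stated approximation properties, and a further application of (i) to $\ker f$ produces an exact sequence $X_1 \xrightarrow{d} X_0 \xrightarrow{f} A \to 0$ with $X_0, X_1 \in \mathcal{X}$, in which both $f$ and the induced $X_1 \to \ker f$ are right $\mathcal{X}$-approximations whose $F$-images are right $F(\mathcal{X})$-approximations. Right exactness of $F$ then makes $F(X_1) \to F(X_0) \to F(A) \to 0$ exact, and applying the contravariant left-exact functors ${\rm Hom}_\mathcal{A}(-, A')$ and ${\rm Hom}_\mathcal{B}(-, F(A'))$ places the natural map ${\rm Hom}(A, A') \to {\rm Hom}(F(A), F(A'))$ as the leftmost vertical in a commutative diagram with exact rows. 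By the five lemma it then suffices to show that ${\rm Hom}(X, A') \to {\rm Hom}(F(X), F(A'))$ is bijective for every $X \in \mathcal{X}$ and every $A' \in \mathcal{A}$.

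For that reduced claim I would fix $X \in \mathcal{X}$ and choose an $\mathcal{X}$-presentation $X'_1 \xrightarrow{d'} X'_0 \xrightarrow{f'} A' \to 0$ of $A'$. The $\mathcal{X}$-approximation properties, together with $X \in \mathcal{X}$, give exactness of ${\rm Hom}(X, X'_1) \to {\rm Hom}(X, X'_0) \to {\rm Hom}(X, A') \to 0$, while condition (i) yields surjectivity on the right of the analogous sequence ${\rm Hom}(F(X), F(X'_1)) \to {\rm Hom}(F(X), F(X'_0)) \to {\rm Hom}(F(X), F(A'))$. The step I expect to be the main obstacle is verifying exactness in the middle of this latter sequence: given $h\colon F(X) \to F(X'_0)$ with $F(f') \circ h = 0$, one must show that $h$ factors through $F(d')$. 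My plan is to use (ii) to lift $h$ to $h'\colon X \to X'_0$ in $\mathcal{A}$, and then to combine the $F(\mathcal{X})$-approximation $F(X'_1) \to F(\ker f')$ from (i) with the canonical surjection $F(\ker f') \twoheadrightarrow \ker F(f')$ coming from right exactness, using (ii) to transport the ensuing lift back into $\mathcal{A}$ and thereby obtain the factorization of $h'$ through $d'$. A second five-lemma application combined with (ii) then yields the sought bijection, completing fully faithfulness.

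For essential surjectivity, assume (iii). Given $B \in \mathcal{B}$, (iii) produces an epimorphism $F(A_0) \twoheadrightarrow B$; letting $K = \ker(F(A_0) \to B)$ in $\mathcal{B}$ and applying (iii) again gives an epimorphism $F(A_1) \twoheadrightarrow K$, so that $F(A_1) \to F(A_0) \to B \to 0$ is exact. By the fully faithfulness established above, the morphism $F(A_1) \to F(A_0)$ equals $F(\alpha)$ for a unique $\alpha\colon A_1 \to A_0$ in $\mathcal{A}$; setting $A = {\rm coker}\, \alpha$, right exactness of $F$ yields $F(A) \simeq {\rm coker}\, F(\alpha) \simeq B$, so $B$ lies in the essential image of $F$. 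Combined with fully faithfulness, this shows $F$ is an equivalence of categories.
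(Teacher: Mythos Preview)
Your five-lemma reduction and your essential-surjectivity argument are essentially the paper's proof repackaged: the paper lifts morphisms directly along the presentations rather than invoking the five lemma, but the content is the same, and your treatment of (iii) matches the paper's verbatim.

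Both arguments hinge on the step you correctly flag as the main obstacle: given $h\colon F(X)\to F(X'_0)$ with $F(f')\circ h=0$ and $X\in\mathcal{X}$, show that $h$ factors through $F(d')$. Equivalently, one must show that $F$ carries the $\mathcal{X}$-presentation $\xi'$ of $A'$ to an $F(\mathcal{X})$-presentation of $F(A')$. Your proposed resolution does not work as written. Lifting $h$ to $h'\colon X\to X'_0$ via (ii) yields only $F(f'\circ h')=0$; you cannot conclude $f'\circ h'=0$, since that is exactly the faithfulness you are proving. Condition (i) gives a right $F(\mathcal{X})$-approximation of $F(\ker f')$, but $h$ lands in $\ker F(f')$; the canonical surjection $F(\ker f')\twoheadrightarrow\ker F(f')$ points the wrong way for lifting $F(X)\to\ker F(f')$ back to $F(\ker f')$, and nothing in the hypotheses makes $F(X)$ projective relative to that surjection. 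Composing a right $F(\mathcal{X})$-approximation with an epimorphism does not in general produce another one, so ``combining'' the two maps does not yield the needed factorization.

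The paper's proof handles this same point by simply asserting that ``$F(\xi')$ is an $F(\mathcal{X})$-presentation'' as a consequence of right exactness and (i), without further argument. So your attempt and the paper's proof share the same unjustified step; you have merely been more explicit about where it lies. In the paper's intended application (the second proof of Theorem~\ref{thm:Beck}) the comparison functor $K$ is exact, so $K(\ker f')=\ker K(f')$ and the difficulty disappears; but for $F$ merely right exact, as stated in Proposition~\ref{prop:equi}, this step needs an argument that neither you nor the paper supplies.
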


\begin{proof}
Let $A, A'$ be two objects in $\mathcal{A}$. By the right exactness of $F$ and (i),  we have two  $\mathcal{X}$-presentations $\xi\colon X_1\stackrel{d}\rightarrow X_0\stackrel{p}\rightarrow A\rightarrow 0$  and $\xi'\colon X'_1\stackrel{d'}\rightarrow X'_0\stackrel{p'}\rightarrow A'\rightarrow 0$ such that $F(\xi)$ and $F(\xi')$ are $F(\mathcal{X})$-presentations of $F(A)$ and $F(A')$, respectively. To show that $F$ is full, take a morphism $t\colon F(A)\rightarrow F(A')$. Since $F(\xi')$ is an $F(\mathcal{X})$-presentation, we have the following commutative diagram
\[\xymatrix{
F(X_1) \ar@{.>}[d]^-{t_1}\ar[r]^{F(d)} & F(X_0) \ar@{.>}[d]^-{t_0} \ar[r]^{F(p)} & F(A)\ar[d]^-{t} \ar[r] & 0\\
F(X'_1) \ar[r]^-{F(d')} & F(X'_0) \ar[r]^{F(p')} & F(A') \ar[r] & 0.
}\]
By (ii), there exist morphisms $s_i\colon X_i\rightarrow X'_i$ such that $F(s_i)=t_i$ and $s_0\circ d=d'\circ s_1$. The identity $s_0\circ d=d'\circ s_1$ implies the existence of a morphism $s\colon A\rightarrow A'$ such that $s \circ p=p'\circ s_0$. It follows that $F(s)=t$ by comparing commutative diagrams.

The faithfulness of $F$ follows by a converse argument. Indeed, given a morphism  $s\colon A\rightarrow A'$ with $F(s)=0$, we have morphisms $s_i\colon X_i\rightarrow X'_i$ such that $s \circ p=p'\circ s_0$ and $s_0\circ d=d'\circ s_1$; here, we use the $\mathcal{X}$-presentation $\xi'$. Then in the above diagram, $t=F(s)=0$ and thus $t_0=F(s_0)$ factors through $F(d')$. By (ii), $s_0$ factors through $d'$, and thus $s\circ p=p'\circ s_0=0$. We infer that $s=0$, since $p$ is epic.

We assume that (iii) is satisfied. Then any object $B$ fits into an exact sequence $F(A')\stackrel{g}\rightarrow F(A)\rightarrow B\rightarrow 0$. By the fully faithfulness of $F$, there exists a morphism $s\colon A'\rightarrow A$ satisfying $F(s)=g$. It follows that $F({\rm Cok}\; s)\simeq B$. This proves the denseness of $F$, and thus $F$ is an equivalence.
\end{proof}

\vskip 5pt

\noindent \emph{The second proof of the ``if" part of Theorem \ref{thm:Beck}.}\quad Since $F$ has a right adjoint, it is right exact. Then the endofunctor $M=UF$ on $\mathcal{A}$ is right exact. It follows that the category $M\mbox{-Mod}_\mathcal{A}$ is abelian; indeed, a sequence of $M$-modules is exact if and only if the corresponding sequence of the underlying objects in $\mathcal{A}$ is exact.

Recall from (\ref{equ:K}) that $K(B)=(U(B), U\epsilon_B)$ and that $K(f)=U(f)$ for an object $B$ and a  morphism $f$ in $\mathcal{B}$. Since the functor $U$ is exact and faithful,  it follows that the comparison functor $K\colon \mathcal{B}\rightarrow M\mbox{-Mod}_\mathcal{A}$ is exact and faithful. Consider $\mathcal{X}={\rm Im}\; F$ as a full subcategory of $\mathcal{B}$. We claim that the pair $K$ and $\mathcal{X}$ satisfy the three conditions in Proposition \ref{prop:equi}. Then we infer that $K$ is an equivalence.

For the claim, we verify the conditions (i)-(iii). For (i), take any object $B$ in $\mathcal{B}$ and consider the counit $\epsilon_B\colon FU(B)\rightarrow B$, which is a right $\mathcal{X}$-approximation by Example \ref{exm:1}. We observe that $K(\epsilon_B)$ equals $(\epsilon_M)_{K(B)}$, the counit $\epsilon_M$ applied on the $M$-module $K(B)$. In particular, by Example \ref{exm:1} $K(\epsilon_B)$ is a right ${\rm Im}\; F_M$-approximation of $K(B)$ in $M\mbox{-Mod}_\mathcal{A}$, which is an epimorphism. Recall that $F_M=KF$ and thus ${\rm Im}\; F_M=K(\mathcal{X})$. Hence, the epimorphism $K(\epsilon_B)$ is a right $K(\mathcal{X})$-approximation of $K(B)$. By Lemma \ref{lem:cok} the functor $K$ reflects epimorphisms. It follows that $\epsilon_B$ is an epimorphism. This is the required morphism in (i).

The condition (ii) is well known, since the restriction of $K$ on $\mathcal{X}={\rm Im}\; F$ is fully faithful; see \cite[Lemma 3.3]{Ch} and compare \cite[VI.3 and VI.5]{McL}. The condition (iii) follows from the epimorphism (\ref{equ:epi}) for any $M$-module $(X, \lambda)$, since $F_MU_M=KFU_M$ and thus $(M(X), \mu_X)=KF(X)$. Set $B=F(X)$. In particular, we have by (\ref{equ:epi}) a required epimorphism $K(B)\rightarrow (X, \lambda).$
\hfill $\square$

\vskip 5pt

\section{Equivariant objects as modules}

In this section, we recall the notions of a group action on a category and the category of equivariant objects. We give a direct proof of the fact that in the additive case,  the category of equivariant objects is isomorphic to the category of modules over a certain monad.

Let $G$ be an arbitrary group. We write $G$ multiplicatively and denote its unit by $e$. Let $\mathcal{C}$ be an arbitrary category.

The notion of a group action on a category is well known; compare \cite{De,RR,DGNO}.   An \emph{action} of $G$ on $\mathcal{C}$ consists of the data $\{F_g, \varepsilon_{g, h}|\; g, h\in G\}$, where each $F_g\colon \mathcal{C}\rightarrow \mathcal{C}$ is an auto-equivalence and each $\varepsilon_{g, h}\colon F_gF_h\rightarrow F_{gh}$ is a natural isomorphism such that a $2$-cocycle condition holds, that is,
\begin{align}\label{equ:2-coc}
\varepsilon_{gh, k}\circ \varepsilon_{g,h}F_k=\varepsilon_{g, hk}\circ F_g\varepsilon_{h,k}
\end{align}
for all $g, h, k\in G$. We observe that there exists a unique natural isomorphism $u\colon F_e\rightarrow {\rm Id}_\mathcal{C}$, called the \emph{unit} of the action, satisfying $\varepsilon_{e,e}=F_eu$; moreover, we have $F_{e}u=uF_e$ by (\ref{equ:2-coc}).

The given action is \emph{strict} provided that each $F_g$ is an automorphism and each isomorphism $\varepsilon_{g, h}$ is the identity, in which case the unit is also the identity. Therefore, a strict action coincides with  a group homomorphism from $G$ to the automorphism group of $\mathcal{C}$.

 Let $G$ act on $\mathcal{C}$.  A \emph{$G$-equivariant object} in $\mathcal{C}$ is a pair $(X, \alpha)$, where $X$ is an object in $\mathcal{C}$ and $\alpha$ assigns for each $g\in G$ an isomorphism $\alpha_g\colon X\rightarrow F_g(X)$ subject to the relations
 \begin{align}\label{equ:rel}
 \alpha_{gg'}=(\varepsilon_{g,g'})_X \circ F_g(\alpha_{g'}) \circ \alpha_g.\end{align}
 These relations imply that $\alpha_e=u^{-1}_X$. A morphism $\theta\colon (X, \alpha)\rightarrow (Y, \beta)$ between  two $G$-equivariant objects is a morphism $\theta\colon X\rightarrow Y$ in $\mathcal{C}$ such that
  $\beta_g\circ \theta=F_g(\theta)\circ \alpha_g$ for all $g\in G$. This gives rise to
   the category $\mathcal{C}^G$ of $G$-equivariant objects, and the \emph{forgetful functor}
   $U\colon \mathcal{C}^G\rightarrow \mathcal{C}$ defined by $U(X, \alpha)=X$. The process forming the category $\mathcal{C}^G$ of equivariant objects is known as the \emph{equivariantization} with respect to the group action; see \cite{DGNO}.

In what follows, we assume that the group $G$ is finite and that $\mathcal{C}$ is an additive category. In this case the forgetful functor $U$
 admits a left adjoint $F\colon \mathcal{C}\rightarrow \mathcal{C}^G$, which is known as the \emph{induction functor};  see \cite[Lemma 4.6]{DGNO}.
The functor $F$ is defined as follows: for an object $X$, set $F(X)=(\oplus_{h\in G} F_h(X), \varepsilon)$, where for each $g\in G$, the isomorphism ${\rm \varepsilon}_g \colon \oplus_{h\in G} F_h(X)\rightarrow F_g(\oplus_{h\in G} F_h(X))$ is diagonally induced by the isomorphism $(\varepsilon_{g, g^{-1}h})_X^{-1}\colon F_h(X)\rightarrow F_g(F_{g^{-1}h}(X))$. Here, to verify that $F(X)$ is indeed an equivariant object, we need the $2$-cocycle condition (\ref{equ:2-coc}). The functor $F$ sends a morphism $\theta\colon X\rightarrow Y$ to $F(\theta)=\oplus_{h\in G} F_h(\theta)\colon F(X)\rightarrow F(Y)$.

For an object $X$ in $\mathcal{C}$ and an object $(Y, \beta)$ in $\mathcal{C}^G$,
  a morphism $F(X)\rightarrow (Y, \beta)$ is of the form $\sum_{h\in G}\theta_h\colon \oplus_{h\in G} F_h(X)\rightarrow Y$
  satisfying $F_g(\theta_h)=\beta_g\circ \theta_{gh}\circ (\varepsilon_{g, h})_X$ for any $g, h\in G$.  The adjunction of $(F, U)$ is given by the
  following natural isomorphism
\begin{align*}
 {\rm Hom}_{\mathcal{C}^G} (F(X), (Y, \beta))\stackrel{\sim}\longrightarrow {\rm Hom}_\mathcal{C}(X, U(Y, \beta))
 \end{align*}
 sending $\sum_{h\in G} \theta_h$ to $\theta_e \circ u_X^{-1}$. The corresponding unit $\eta\colon {\rm Id}_\mathcal{C}\rightarrow UF$ is
 given such that $\eta_X=(u_X^{-1}, 0, \cdots, 0)^t$, where `t' denotes the transpose;
 the counit $\epsilon\colon FU\rightarrow {\rm Id}_{\mathcal{C}^G}$ is given such that
  $\epsilon_{(Y, \beta)}=\sum_{h\in G} \beta_h^{-1}$.

Let us compute the monad $M=(UF, \eta, \mu)$ defined by the  adjoint pair $(F, U; \eta, \epsilon)$. This monad is said to be \emph{defined} by the group action.  The endofunctor $M\colon \mathcal{C}\rightarrow \mathcal{C}$ is given by $M(X)=\oplus_{h\in G} F_h(X)$ and $M(\theta)=\oplus_{h\in G} F_h(\theta)$ for a morphism $\theta$ in $\mathcal{C}$. The multiplication $\mu\colon M^2\rightarrow M$ is given by
 $$\mu_X=U\epsilon_{F(X)}\colon M^2(X)=\oplus_{h, g\in G} F_hF_g(X) \longrightarrow M(X)=\oplus_{h\in G} F_h(X)$$
 with the property that the corresponding entry
  $F_hF_g(X)\rightarrow F_{h'}(X)$ is $\delta_{hg, h'}(\varepsilon_{h, g})_X$; here, $\delta$ is the Kronecker symbol.

The following result shows that the category of equivariant objects is isomorphic to the category of $M$-modules. Roughly speaking, equivariant objects are modules. We mention that the result is an immediate consequence of Beck's monadicity theorem in \cite[VI.7]{McL}; see also \cite[Lemma 4.3]{Ch} and compare \cite[Proposition 3.10]{El2014}. We provide a direct proof for completeness.

 \begin{prop}\label{prop:monadic}
 Let $\mathcal{C}$ be an additive category and $G$ be a finite group acting on $\mathcal{C}$. Keep the notation as above.
 Then the adjoint pair $(F,U; \eta, \epsilon)$ is strictly monadic, that is, the associated comparison functor $K\colon \mathcal{C}^G\rightarrow M\mbox{-{\rm Mod}}_\mathcal{C}$ is an isomorphism of categories.
 \end{prop}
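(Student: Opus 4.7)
The plan is to construct an explicit two-sided inverse $K'\colon M\mbox{-{\rm Mod}}_\mathcal{C} \to \mathcal{C}^G$ to the comparison functor. Since objects on either side are objects of $\mathcal{C}$ equipped with extra structure and morphisms are $\mathcal{C}$-morphisms compatible with that structure, what must be checked is that the assignment $\beta \mapsto \lambda = \sum_{h \in G}\beta_h^{-1}$ on equivariant structures is a bijection onto $M$-module structures, and that the two morphism compatibility conditions are equivalent.

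First I would unpack the $M$-module axioms for a pair $(X,\lambda)$, decomposing $\lambda = \sum_{h \in G}\lambda_h\colon \bigoplus_h F_h(X) \to X$. The unit axiom $\lambda\circ\eta_X = {\rm Id}_X$, together with $\eta_X=(u_X^{-1},0,\dots,0)^t$, forces $\lambda_e = u_X$. Reading off the $(h,g)$-components of the associativity axiom $\lambda\circ M\lambda = \lambda\circ\mu_X$, using the explicit formula for $\mu$ given above the proposition, yields the family
$$\lambda_h\circ F_h(\lambda_g) \;=\; \lambda_{hg}\circ(\varepsilon_{h,g})_X \qquad (g,h\in G).$$

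The main step is to show that each component $\lambda_g\colon F_g(X) \to X$ is an isomorphism. Specialising $(h,g)$ to $(g,g^{-1})$ gives $\lambda_g\circ F_g(\lambda_{g^{-1}}) = u_X\circ(\varepsilon_{g,g^{-1}})_X$, which is an isomorphism, so $\lambda_g$ is a split epimorphism. Specialising $(h,g)$ to $(g^{-1},g)$ yields $\lambda_{g^{-1}}\circ F_{g^{-1}}(\lambda_g) = u_X\circ(\varepsilon_{g^{-1},g})_X$, so $F_{g^{-1}}(\lambda_g)$ is a split monomorphism; applying the additive auto-equivalence $F_g$ (which preserves split monomorphisms) and transporting along the natural isomorphism $F_gF_{g^{-1}} \simeq {\rm Id}_\mathcal{C}$ built from $\varepsilon_{g,g^{-1}}$ and $u$, we conclude that $\lambda_g$ itself is a split monomorphism. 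Being simultaneously split mono and split epi, $\lambda_g$ is an isomorphism.

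With invertibility in hand, set $K'(X,\lambda) := (X,\beta)$ with $\beta_g := \lambda_g^{-1}$, and $K'(\theta) := \theta$. Inverting the displayed relation produces precisely the $2$-cocycle condition (\ref{equ:rel}), so $(X,\beta) \in \mathcal{C}^G$; likewise, the module-morphism condition $\theta\circ\lambda_h = \lambda'_h\circ F_h(\theta)$ is equivalent after inversion to the equivariance condition $\beta'_h\circ\theta = F_h(\theta)\circ\beta_h$. A direct comparison with the defining formulas $K(Y,\beta) = (Y,\sum_h\beta_h^{-1})$ and $K(f) = U(f)$ then shows $K'K$ and $KK'$ are the identities on the nose, so $K$ is an isomorphism of categories. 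The main obstacle is cleanly executing the split-mono/split-epi argument for invertibility of $\lambda_g$; once that is settled, everything else is formal bookkeeping with the $2$-cocycle $\varepsilon$ and the unit $u$.
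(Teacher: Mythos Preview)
Your proposal is correct and follows essentially the same route as the paper: both unpack the $M$-module axioms into the componentwise relations $\lambda_e=u_X$ and $\lambda_h\circ F_h(\lambda_g)=\lambda_{hg}\circ(\varepsilon_{h,g})_X$, and both identify the inverse of $K$ as the assignment $(X,\lambda)\mapsto (X,\beta)$ with $\beta_h=\lambda_h^{-1}$. The one genuine addition in your write-up is the split mono/split epi argument showing that each $\lambda_g$ is actually invertible; the paper simply writes ``if we set $\alpha_h=(\lambda_h)^{-1}$'' without justifying that this inverse exists, so your version is in fact more complete on exactly the point you flagged as the main obstacle.
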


\begin{proof}
We have just computed explicitly the monad $M$.   Let us take a closer look at $M$-modules. An $M$-module is a pair $(X, \lambda)$ with $\lambda=\sum_{h\in G}\lambda_h\colon M(X)=\oplus_{h\in G} F_h(X)\rightarrow X$.
  The condition $\lambda\circ \eta_X={\rm Id}_X$ is equivalent to $\lambda_e=u_X$,
  and $\lambda\circ M(\lambda)=\lambda\circ \mu_X$ is equivalent to
   $\lambda_{hg}\circ \varepsilon_{h,g}=\lambda_h\circ F_h(\lambda_g)$ for any $h, g\in G$. Hence, if we
   set $\alpha_h\colon X\rightarrow F_h(X)$ to be $(\lambda_h)^{-1}$ and compare (\ref{equ:rel}), we obtain an object $(X, \alpha)\in \mathcal{C}^G$.
    Roughly speaking, the maps $\lambda_h$'s carry the same information as $\alpha_h$'s.

Recall  from (\ref{equ:K}) that the associated comparison functor $K\colon \mathcal{C}^G\rightarrow M\mbox{-Mod}_\mathcal{C}$  is constructed such that $K(X, \alpha)=(U(X, \alpha), U\epsilon_{(X, \alpha)})$, which equals the $M$-module $(X, \lambda)$ with $\lambda_h=(\alpha_h)^{-1}$ by the explicit form of the counit $\epsilon$. It follows immediately that $K$ induces a bijection on objects, and is fully faithful. We infer that $K$ is  an isomorphism of categories.
\end{proof}

\begin{rem}\label{rem:monadic}
Proposition \ref{prop:monadic} can be extended slightly. Let $G$ be a group whose cardinality $|G|$ might be infinite. Assume that the additive category $\mathcal{C}$ has coproducts with any index set of cardinality less or equal to $|G|$. For a $G$-action on $\mathcal{C}$, we define the induction functor $F$ and the monad $M$ as above by replacing finite coproducts by coproducts indexed by a possibly infinite index set. The same argument proves that the comparison functor $K\colon \mathcal{C}^G\rightarrow M\mbox{-{\rm Mod}}_\mathcal{C}$ is an isomorphism of categories.
\end{rem}

\section{Exact monads and quotient abelian categories}

 In this section, we give the first application of Theorem \ref{thm:Beck},  which states that the formation of the module category of  an exact monad is compatible with quotient abelian categories by Serre subcategories. This result applies to the category of equivariant objects in an abelian category.

 \subsection{}

 Let $\mathcal{A}$ be an abelian category. A \emph{Serre subcategory} $\mathcal{N}$ of $\mathcal{A}$ is by definition a full subcategory which is closed under subobjects, quotient objects and extensions. In other words, for an exact sequence $0\rightarrow X\rightarrow Y\rightarrow Z\rightarrow 0$ in $\mathcal{A}$, $Y$ lies in $\mathcal{N}$ if and only if both $X$ and $Z$ lie in $\mathcal{N}$. It follows that a Serre subcategory $\mathcal{N}$ is abelian and the inclusion functor $\mathcal{N}\rightarrow \mathcal{A}$ is exact.

 For a Serre subcategory $\mathcal{N}$ of $\mathcal{A}$, we denote by $\mathcal{A}/\mathcal{N}$ the \emph{quotient abelian category}; see \cite{Ga62}. The objects of $\mathcal{A}/\mathcal{N}$ are the same as $\mathcal{A}$, and for two objects $X$ and $Y$, a morphism in $\mathcal{A}/\mathcal{N}$ is represented by a morphism $X'\rightarrow Y/Y'$, where $X'\subseteq X$ and $Y'\subseteq Y$ are subobjects with both $X/X'$ and $Y'$ in $\mathcal{N}$. The quotient functor $q\colon \mathcal{A}\rightarrow \mathcal{A}/\mathcal{N}$ sends an object $X$ to $X$, a morphism $f\colon X\rightarrow Y$ to the morphism $q(f)$,  which is represented by $f\colon X\rightarrow Y$; here,  the corresponding $X'$  and $Y'$ are $X$ and $0$, respectively. The functor $q$ is exact with its essential kernel ${\rm Ker}\; q=\mathcal{N}$. In particular, for a morphism $f\colon X\rightarrow Y$ in $\mathcal{A}$, $q(f)=0$ if and only if its image ${\rm Im}\; f$ lies in $\mathcal{N}$.

Let $F\colon \mathcal{A}\rightarrow \mathcal{A}'$ be an exact functor. Then the essential kernel ${\rm Ker}\; F$ is a Serre subcategory of $\mathcal{A}$, and $F$ induces a unique exact functor $F'\colon \mathcal{A}/{{\rm Ker}\; F}\rightarrow \mathcal{A}'$. We say that $F$ is a \emph{quotient functor} provided that $F'$ is an equivalence.

Let $F\colon \mathcal{A}\rightarrow \mathcal{A}'$ be an exact functor. Assume that $\mathcal{N}\subseteq \mathcal{A}$ and $\mathcal{N}'\subseteq \mathcal{A}'$ are Serre subcategories such that $F(\mathcal{N})\subseteq \mathcal{N}'$. Then there is a uniquely induced exact functor $\bar{F}\colon \mathcal{A}/\mathcal{N}\rightarrow \mathcal{A}'/{\mathcal{N}'}$.

\begin{lem}\label{lem:quoinduced}
Keep the notation as above. Assume that $F\colon \mathcal{A}\rightarrow \mathcal{A}'$ is a quotient functor. Then the induced functor $\bar{F}\colon \mathcal{A}/\mathcal{N}\rightarrow \mathcal{A}'/{\mathcal{N}'}$ is also a quotient functor.
\end{lem}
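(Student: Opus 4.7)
The plan is to exhibit both $(\mathcal{A}/\mathcal{N})/{\rm Ker}\;\bar F$ and $\mathcal{A}'/\mathcal{N}'$ as the same quotient of $\mathcal{A}$, namely $\mathcal{A}/\mathcal{M}$, where $\mathcal{M}$ denotes the full subcategory of objects $X$ in $\mathcal{A}$ with $F(X)\in \mathcal{N}'$. By the exactness of $F$ and the fact that $\mathcal{N}'$ is a Serre subcategory, $\mathcal{M}$ is itself a Serre subcategory of $\mathcal{A}$; by hypothesis it contains $\mathcal{N}$, and tautologically it contains ${\rm Ker}\; F$ since $0\in \mathcal{N}'$.

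First, I would show that ${\rm Ker}\;\bar F$ equals $\mathcal{M}/\mathcal{N}$, defined as the full subcategory of $\mathcal{A}/\mathcal{N}$ whose objects are those isomorphic to $q_\mathcal{N}(X)$ for some $X\in \mathcal{M}$. Since $q_\mathcal{N}$ is the identity on objects, every object of $\mathcal{A}/\mathcal{N}$ has the form $q_\mathcal{N}(X)$, and the identity $\bar F\circ q_\mathcal{N}=q_{\mathcal{N}'}\circ F$ together with ${\rm Ker}\; q_{\mathcal{N}'}=\mathcal{N}'$ gives $\bar F(q_\mathcal{N}(X))\simeq 0$ precisely when $F(X)\in \mathcal{N}'$, i.e. when $X\in \mathcal{M}$. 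The iterated quotient formula, a direct consequence of the universal property of quotient functors recorded in \cite{Ga62}, then yields an exact equivalence $(\mathcal{A}/\mathcal{N})/{\rm Ker}\;\bar F=(\mathcal{A}/\mathcal{N})/(\mathcal{M}/\mathcal{N})\stackrel{\sim}{\longrightarrow}\mathcal{A}/\mathcal{M}$, induced from $q_\mathcal{N}$ by the universal property of $q_\mathcal{M}\colon \mathcal{A}\to \mathcal{A}/\mathcal{M}$.

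Next, using the hypothesis that $F$ is a quotient functor, so that $F'\colon \mathcal{A}/{\rm Ker}\; F\stackrel{\sim}{\longrightarrow}\mathcal{A}'$ is an equivalence, I would transport the Serre subcategory $\mathcal{N}'$ back along $F'$; the result is precisely $\mathcal{M}/{\rm Ker}\; F$, and passing to quotients gives $\mathcal{A}'/\mathcal{N}'\simeq (\mathcal{A}/{\rm Ker}\; F)/(\mathcal{M}/{\rm Ker}\; F)\simeq \mathcal{A}/\mathcal{M}$ by the same iterated quotient formula. Chaining the two equivalences, and verifying via the universal property of $q_\mathcal{M}$ that the composite is the functor induced by $\bar F$, shows that $\bar F$ is a quotient functor. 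The main bookkeeping obstacle is the iterated quotient formula $(\mathcal{A}/\mathcal{P})/(\mathcal{Q}/\mathcal{P})\simeq \mathcal{A}/\mathcal{Q}$ for nested Serre subcategories $\mathcal{P}\subseteq \mathcal{Q}$, which I would simply quote from \cite{Ga62} rather than reprove.
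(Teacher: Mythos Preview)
Your proposal is correct and follows essentially the same route as the paper: the paper denotes your $\mathcal{M}$ by $\mathcal{C}$, identifies ${\rm Ker}\;\bar F=\mathcal{C}/\mathcal{N}$, and then uses the iterated quotient isomorphisms $(\mathcal{A}/\mathcal{N})/(\mathcal{C}/\mathcal{N})\simeq \mathcal{A}/\mathcal{C}\simeq (\mathcal{A}/{\rm Ker}\; F)/(\mathcal{C}/{\rm Ker}\; F)$ together with the equivalence $F'$ to conclude. Your write-up is slightly more explicit in checking that the resulting equivalence is the one induced by $\bar F$, but the argument is the same.
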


\begin{proof}
Denote by $\mathcal{C}$ the inverse image of $\mathcal{N}'$ under $F$. Then $\mathcal{C}$ is a Serre subcategory of $\mathcal{A}$ containing $\mathcal{N}$ and ${\rm Ker}\; F$. The essential kernel ${\rm Ker}\; \bar{F}$ of $\bar{F}$ equals $\mathcal{C}/\mathcal{N}$.

We observe the following isomorphisms of abelian categories
$$(\mathcal{A}/\mathcal{N})/(\mathcal{C}/\mathcal{N})\simeq \mathcal{A}/\mathcal{C}\simeq (\mathcal{A}/{{\rm Ker}\; F})/(\mathcal{C}/{{\rm Ker}\; F}).$$
By the assumption, we have the equivalence $F'\colon \mathcal{A}/{{\rm Ker}\; F}\stackrel{\sim}\longrightarrow \mathcal{A}'$, which induces an equivalence $(\mathcal{A}/{{\rm Ker}\; F})/(\mathcal{C}/{{\rm Ker}\; F}) \stackrel{\sim}\longrightarrow \mathcal{A}'/\mathcal{N}'$. Consequently, we have the required equivalence $(\mathcal{A}/\mathcal{N})/{\rm Ker}\; \bar{F}\stackrel{\sim}\longrightarrow \mathcal{A}'/\mathcal{N}'$.
\end{proof}

 Let $M\colon \mathcal{A}\rightarrow \mathcal{A}$ be an \emph{exact} monad on $\mathcal{A}$, that is, $M=(M, \eta, \mu)$ is a monad on $\mathcal{A}$ and the endofunctor $M$ is exact. Recall that the category $M\mbox{-Mod}_\mathcal{A}$ of $M$-modules is abelian, where a sequence of $M$-modules is exact if and only if the corresponding sequence of underlying objects in $\mathcal{A}$ is exact. It follows that both the free module functor $F_M\colon \mathcal{A}\rightarrow M\mbox{-Mod}_\mathcal{A}$ and the forgetful functor $U_M\colon M\mbox{-Mod}_\mathcal{A}\rightarrow \mathcal{A}$ are exact.

 Let $\mathcal{N}\subseteq \mathcal{A}$ be a Serre subcategory such that $M(\mathcal{N})\subseteq \mathcal{N}$. Then we have the induced endofunctor $\bar{M}\colon \mathcal{A}/\mathcal{N}\rightarrow \mathcal{A}/\mathcal{N}$; moreover, the natural transformations $\eta$ and $\mu$ induce natural transformations $\bar{\eta}\colon {\rm Id}_{\mathcal{A}/\mathcal{N}}\rightarrow \bar{M}$ and $\bar{\mu}\colon \bar{M}^2\rightarrow \bar{M}$. Indeed, we obtain a monad $\bar{M}=(\bar{M}, \bar{\eta}, \bar{\mu})$ on $\mathcal{A}/\mathcal{N}$, called the \emph{induced monad} of $M$.

 We will need the following standard fact.

\begin{lem}\label{lem:quotientAbel}
Let $F\colon \mathcal{A}\rightarrow \mathcal{A}'$ be an exact functor between two abelian categories, which has an exact right
adjoint $U\colon \mathcal{A}'\rightarrow \mathcal{A}$. Assume that $\mathcal{N}\subseteq \mathcal{A}$ and $\mathcal{N}'\subseteq \mathcal{A}'$ are Serre subcategories such that $F(\mathcal{N})\subseteq \mathcal{N}'$ and $U(\mathcal{N}')\subseteq \mathcal{N}$. Then the following statements hold.
\begin{enumerate}
\item The induced functor $\bar{F}\colon \mathcal{A}/\mathcal{N}\rightarrow \mathcal{A}'/{\mathcal{N}'}$ is left adjoint to the induced functor $\bar{U}\colon \mathcal{A}'/\mathcal{N}'\rightarrow \mathcal{A}/\mathcal{N}$.
    \item The monad on $\mathcal{A}/\mathcal{N}$ defined by the adjoint pair $(\bar{F}, \bar{U})$ coincides with the induced monad of the one on $\mathcal{A}$ defined by the adjoint pair $(F, U)$.
        \item Denote by $U^{-1}(\mathcal{N})$ the inverse image of $\mathcal{N}$. Assume that $U^{-1}(\mathcal{N})=\mathcal{N}'$. Then the induced functor $\bar{U}$ is faithful.
 \end{enumerate}
\end{lem}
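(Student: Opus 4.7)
My plan treats parts (1), (2), (3) in order, with the main effort concentrated in (1). For part (1), the strategy is to descend the adjoint data $(F,U;\eta,\epsilon)$ along the Gabriel quotient functors $q\colon \mathcal{A}\to \mathcal{A}/\mathcal{N}$ and $q'\colon \mathcal{A}'\to \mathcal{A}'/\mathcal{N}'$. The hypotheses $F(\mathcal{N})\subseteq \mathcal{N}'$ and $U(\mathcal{N}')\subseteq \mathcal{N}$, together with exactness of $F$ and $U$, yield by the universal property of $q$ and $q'$ unique exact functors $\bar{F},\bar{U}$ satisfying $\bar{F}q=q'F$ and $\bar{U}q'=qU$. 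Since $q$ is the identity on objects, the components $\bar{\eta}_X:=q(\eta_X)$ and $\bar{\epsilon}_Y:=q'(\epsilon_Y)$ are defined for every object of the quotient categories; naturality on morphisms of the form $q(f)$ (resp.\ $q'(g)$) is inherited from that of $\eta$ (resp.\ $\epsilon$), and extends to arbitrary morphisms in the quotients via the calculus of fractions. The triangle identities for $(\bar{F},\bar{U};\bar{\eta},\bar{\epsilon})$ then follow by applying $q$ or $q'$ to those of $(F,U;\eta,\epsilon)$.

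For part (2), the monad defined on $\mathcal{A}/\mathcal{N}$ by $(\bar{F},\bar{U};\bar{\eta},\bar{\epsilon})$ has endofunctor $\bar{U}\bar{F}$, unit $\bar{\eta}$, and multiplication $\bar{U}\bar{\epsilon}\bar{F}$. The identity $\bar{U}\bar{F}=\bar{M}$ follows from $\bar{U}\bar{F}q=qUF=qM$ and uniqueness of descent; part (1) identifies $\bar{\eta}$ with the descent of $\eta$ and $\bar{U}\bar{\epsilon}\bar{F}$ with the descent of $U\epsilon F=\mu$. These are by definition the unit and multiplication of the induced monad $\bar{M}=(\bar{M},\bar{\eta},\bar{\mu})$, so the two monads coincide.

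For part (3), I will use the criterion that a morphism of $\mathcal{A}'/\mathcal{N}'$ represented by $f\colon X'\to Y/Y'$ in $\mathcal{A}'$ vanishes iff $\mathrm{Im}(f)\in \mathcal{N}'$. Given such an $f$ whose class is killed by $\bar{U}$, the commutation $\bar{U}q'=qU$ gives $q(U(f))=0$, hence $\mathrm{Im}(U(f))\in \mathcal{N}$. Exactness of $U$ identifies $\mathrm{Im}(U(f))$ with $U(\mathrm{Im}(f))$, and the hypothesis $U^{-1}(\mathcal{N})=\mathcal{N}'$ forces $\mathrm{Im}(f)\in \mathcal{N}'$; the original morphism therefore vanishes in $\mathcal{A}'/\mathcal{N}'$, proving faithfulness. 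The main obstacle I anticipate lies not in (2) or (3), which are essentially formal, but in part (1): one must verify naturality of $\bar{\eta}$ and $\bar{\epsilon}$ on morphisms in the quotients that are not directly of the form $q(f)$, which requires pushing naturality through Gabriel's calculus of fractions and is the only nontrivial book-keeping in the argument.
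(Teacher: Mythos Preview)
Your proposal is correct and follows essentially the same approach as the paper: for (1) you descend $\eta$ and $\epsilon$ through the quotient functors and verify the triangle identities (the paper phrases this as invoking \cite[IV.1, Theorem~2(v)]{McL}); (2) is the same formal identification of the two monads; and for (3) you use exactly the paper's argument via the image criterion, exactness of $U$ giving $\mathrm{Im}\,U(f)\simeq U(\mathrm{Im}\,f)$, and the hypothesis $U^{-1}(\mathcal{N})=\mathcal{N}'$. The only book-keeping you flag as the ``main obstacle'' (naturality of $\bar\eta,\bar\epsilon$ on general morphisms in the quotient) is a point the paper suppresses entirely, so your account is in fact slightly more detailed.
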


\begin{proof}
Denote the given adjoint pair by $(F, U; \eta, \epsilon)$ and its defined monad by $(M=UF, \eta, \mu)$.
We observe that the unit $\eta$ (\emph{resp.},
the counit $\epsilon$) induces naturally a natural transformation
$\bar{\eta}\colon {\rm Id}_{\mathcal{A}/\mathcal{N}}
\rightarrow \bar{U}\bar{F}$ (\emph{resp.}, $\bar{\epsilon}\colon
\bar{F} \bar{U} \rightarrow {\rm Id}_{\mathcal{A}'/{\mathcal{N}'}}$). Then we apply
\cite[IV.1, Theorem 2(v)]{McL} to deduce the adjoint pair  $(\bar{F}, \bar{U}; \bar{\eta}, \bar{\epsilon})$. It follows that the monad defined by this adjoint pair coincides with $(\bar{M}, \bar{\eta}, \bar{\mu})$, the induced monad of $M$.

For (3), take any morphism $\theta\colon X\rightarrow Y$ in $\mathcal{A}'/\mathcal{N}'$ with $\bar{U}(\theta)=0$. We assume that $\theta$ is represented by a morphism $f\colon X'\rightarrow Y/Y'$, where $X'\subseteq X$ and $Y'\subseteq Y$ are subobjects satisfying that both $X/X'$ and $Y'$ lie in $\mathcal{N}'$. Then $\bar{U}(\theta)$ is represented by $U(f)\colon U(X')\rightarrow U(Y/Y')$, and thus $q(U(f))=0$ in $\mathcal{A}/\mathcal{N}$, or equivalently, the image ${\rm Im}\;U(f)$ lies in $\mathcal{N}$. We observe that ${\rm Im}\; U(f)\simeq U({\rm Im}\; f)$ and then ${\rm Im}\; f$ lies in $\mathcal{N}'$, since $U^{-1}(\mathcal{N})=\mathcal{N}'$. We infer that $\theta$ equals zero in $\mathcal{A}'/{\mathcal{N}'}$. We are done. \end{proof}

\subsection{}

 Let $\mathcal{A}$ be an abelian category, and $M\colon \mathcal{A} \rightarrow \mathcal{A}$ be an exact monad. We will apply Lemma  \ref{lem:quotientAbel} to the adjoint pair $(F_M, U_M)$.

 Let $\mathcal{N}\subseteq \mathcal{A}$ be a Serre subcategory with $M(\mathcal{N})\subseteq \mathcal{N}$. Then we have the restricted
 exact monad $M\colon \mathcal{N}\rightarrow \mathcal{N}$ and the category $M\mbox{-Mod}_\mathcal{N}$ of $M$-modules in $\mathcal{N}$. We view $M\mbox{-Mod}_\mathcal{N}$ as a full subcategory of $M\mbox{-Mod}_\mathcal{A}$, in other words, an $M$-module $(X, \lambda)$ lies in $M\mbox{-Mod}_\mathcal{N}$ if and only if the underlying object $X$ lies in $\mathcal{N}$. We observe that $M\mbox{-Mod}_\mathcal{N}\subseteq M\mbox{-Mod}_\mathcal{A}$ is a Serre subcategory.

Consider the induced monad $\bar{M}$ on $\mathcal{A}/\mathcal{N}$. We observe that an $M$-module $(X, \lambda)$ yields an $\bar{M}$-module $(X, q(\lambda))$ on $\mathcal{A}/\mathcal{N}$, where $q(\lambda)\colon \bar{M}(X)=qM(X)\rightarrow X$. This gives rise to an exact functor $$\Phi\colon M\mbox{-Mod}_\mathcal{A}\longrightarrow \bar{M}\mbox{-Mod}_{\mathcal{A}/\mathcal{N}}, \quad (X, \lambda)\mapsto (X, q(\lambda)).$$
The functor $\Phi$ sends a morphism $\theta$ to $q(\theta)$. We observe that $\Phi$ vanishes on the Serre subcategory  $M\mbox{-Mod}_\mathcal{N}$, and induces an exact functor $\bar{\Phi}\colon M\mbox{-{\rm Mod}}_\mathcal{A}/{M\mbox{-{\rm Mod}}_\mathcal{N}}\rightarrow \bar{M}\mbox{-{\rm Mod}}_{\mathcal{A}/\mathcal{N}}$.

 \begin{prop}\label{prop:app1}
 Let $M\colon \mathcal{A}\rightarrow \mathcal{A}$ be an exact monad on an abelian category $\mathcal{A}$, and let $\mathcal{N}\subseteq \mathcal{A}$ be a Serre subcategory satisfying $M(\mathcal{N})\subseteq \mathcal{N}$. Keep the notation as above. Then the induced functor
 $$\bar{\Phi}\colon M\mbox{-{\rm Mod}}_\mathcal{A}/{M\mbox{-{\rm Mod}}_\mathcal{N}}\stackrel{\sim}\longrightarrow \bar{M}\mbox{-{\rm Mod}}_{\mathcal{A}/\mathcal{N}}$$
  is an equivalence of categories.
 \end{prop}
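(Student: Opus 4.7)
My plan is to show that $\bar{\Phi}$ coincides (up to natural isomorphism) with the comparison functor of a suitably induced adjunction, and then invoke Theorem \ref{thm:Beck}. Start with the canonical adjoint pair $(F_M, U_M; \eta, \epsilon_M)$ between $\mathcal{A}$ and $M\mbox{-Mod}_\mathcal{A}$; since the monad $M$ is exact, both $F_M$ and $U_M$ are exact. The Serre subcategories $\mathcal{N}\subseteq \mathcal{A}$ and $M\mbox{-Mod}_\mathcal{N}\subseteq M\mbox{-Mod}_\mathcal{A}$ satisfy the hypotheses of Lemma \ref{lem:quotientAbel}: indeed $F_M(X)=(M(X),\mu_X)$ lies in $M\mbox{-Mod}_\mathcal{N}$ whenever $X\in \mathcal{N}$, by the assumption $M(\mathcal{N})\subseteq \mathcal{N}$, while $U_M(M\mbox{-Mod}_\mathcal{N})\subseteq \mathcal{N}$ holds tautologically. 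Moreover $U_M^{-1}(\mathcal{N})=M\mbox{-Mod}_\mathcal{N}$: an $M$-module whose underlying object lies in $\mathcal{N}$ automatically lies in $M\mbox{-Mod}_\mathcal{N}$.

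Applying Lemma \ref{lem:quotientAbel} I obtain an adjoint pair $(\bar{F}_M, \bar{U}_M)$ between $\mathcal{A}/\mathcal{N}$ and $M\mbox{-Mod}_\mathcal{A}/M\mbox{-Mod}_\mathcal{N}$. By part (2) of that lemma, the monad on $\mathcal{A}/\mathcal{N}$ defined by this induced pair is precisely $\bar{M}$, and by part (3) the functor $\bar{U}_M$ is faithful. Both induced functors are exact, since they are obtained from exact functors via the exact quotient functors. Hence Theorem \ref{thm:Beck} applies to $(\bar{F}_M, \bar{U}_M)$ and yields that the associated comparison functor
\[K\colon M\mbox{-Mod}_\mathcal{A}/M\mbox{-Mod}_\mathcal{N}\longrightarrow \bar{M}\mbox{-Mod}_{\mathcal{A}/\mathcal{N}}\]
is an equivalence of categories.

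It remains to identify $K$ with $\bar{\Phi}$. Using formula (\ref{equ:K}) together with the explicit description $(\epsilon_M)_{(X,\lambda)}=\lambda$ from (\ref{equ:epi}), a direct unwinding gives $K(X,\lambda)=(\bar{U}_M(X,\lambda), \bar{U}_M\bar{\epsilon}_{(X,\lambda)})=(X, q(\lambda))$ and $K(\theta)=q(\theta)$, which matches the definition of $\Phi(X,\lambda)$. Since $K$ is defined on the quotient and the composite of $\Phi$ with the quotient functor $M\mbox{-Mod}_\mathcal{A}\to M\mbox{-Mod}_\mathcal{A}/M\mbox{-Mod}_\mathcal{N}$ coincides with $\Phi$ itself followed by the analogous quotient identification, the universal property of the quotient abelian category forces $K=\bar{\Phi}$. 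Consequently $\bar{\Phi}$ is an equivalence.

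The main obstacle I anticipate is purely bookkeeping: one must track the induced unit $\bar{\eta}$, the induced counit $\bar{\epsilon}$, and the induced multiplication $\bar{\mu}$ carefully enough to confirm that the $\bar{M}$-module structure produced by the comparison functor $K$ is literally the one $(X, q(\lambda))$ used in the definition of $\Phi$. Once this identification is in place, no further work is needed, and the conclusion is immediate from Theorem \ref{thm:Beck} applied to the induced adjoint pair.
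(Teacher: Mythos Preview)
Your proposal is correct and follows essentially the same approach as the paper's own proof: pass the adjoint pair $(F_M,U_M)$ to the quotients via Lemma~\ref{lem:quotientAbel}, verify that the induced right adjoint $\bar{U}_M$ is exact and faithful using part (3), apply Theorem~\ref{thm:Beck} to conclude that the comparison functor $K$ is an equivalence, and then identify $K$ with $\bar{\Phi}$ by unwinding (\ref{equ:K}) and the counit formula. The paper's argument is structurally identical; your version merely spells out a couple of verifications (such as $U_M(M\mbox{-Mod}_\mathcal{N})\subseteq \mathcal{N}$ and exactness of the induced functors) that the paper leaves implicit.
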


 \begin{proof}
 Consider the free module functor $F_M\colon \mathcal{A}\rightarrow M\mbox{-{\rm Mod}}_\mathcal{A}$ and the forgetful functor $U_M\colon M\mbox{-{\rm Mod}}_\mathcal{A}\rightarrow \mathcal{A}$, both of which are exact. We observe that $F_M(\mathcal{N})\subseteq M\mbox{-{\rm Mod}}_\mathcal{N}$, and $U_M^{-1}(\mathcal{N})=M\mbox{-{\rm Mod}}_\mathcal{N}$. We apply Lemma \ref{lem:quotientAbel}(1) and (3), and obtain the adjoint pair $\overline{F_M}$ and $\overline{U_M}$ between quotient abelian categories $\mathcal{A}/\mathcal{N}$ and $M\mbox{-{\rm Mod}}_\mathcal{A}/{M\mbox{-{\rm Mod}}_\mathcal{N}}$, where $\overline{U_M}$ is faithful. Moreover, by Lemma \ref{lem:quotientAbel}(2) the monad defined by this adjoint pair coincides with the induced monad $\bar{M}$ of $M$. We apply Theorem \ref{thm:Beck} to the adjoint pair $(\overline{F_M}, \overline{U_M})$ and infer that the associated comparison functor $$K\colon M\mbox{-{\rm Mod}}_\mathcal{A}/{M\mbox{-{\rm Mod}}_\mathcal{N}}\longrightarrow \bar{M}\mbox{-Mod}_{\mathcal{A}/\mathcal{N}}$$ is an equivalence.

It remains to observe that $K=\bar{\Phi}$. Indeed, by (\ref{equ:K}) the comparison functor $K$  sends an $M$-module $(X, \lambda)$ to an $\bar{M}$-module $(\overline{U_M}(X, \lambda), \overline{U_M}(\bar{\epsilon}_M)_{(X, \lambda)})$, which equals $(X, q(\lambda))$. Hence, the functors $K$ and $\bar{\Phi}$  agree on objects. For the same reason, they agree on morphisms.
 \end{proof}

 We now apply Proposition \ref{prop:app1} to the category of equivariant objects. Let $G$ be a finite group which acts on an abelian category $\mathcal{A}$ by the data $\{F_g, \varepsilon_{g,h}\; |\; g,h\in G\}$. Then the category $\mathcal{A}^G$ of $G$-equivariant objects is abelian; indeed, a sequence of equivariant objects is exact if and only if the corresponding sequence of the underlying objects in $\mathcal{A}$ is exact.

 Let $\mathcal{N}\subseteq \mathcal{A}$ be a Serre subcategory which is \emph{invariant} under this action, that is, $F_g(\mathcal{N})\subseteq \mathcal{N}$ for any $g\in G$. Then the quotient category $\mathcal{A}/\mathcal{N}$ inherits a $G$-action, which is given by the data $\{\bar{F_g}, \bar{\varepsilon}_{g,h}\; |\; g,h\in G\}$. The quotient functor $q\colon \mathcal{A}\rightarrow \mathcal{A}/\mathcal{N}$ induces an exact functor
 $$\Psi\colon \mathcal{A}^G\longrightarrow (\mathcal{A}/\mathcal{N})^G.$$
More precisely, $\Psi$ sends a $G$-equivariant object $(X, \alpha)$ to $(X, q(\alpha))$, where $q(\alpha)_g=q(\alpha_g)\colon X\rightarrow \bar{F_g}(X)=qF_g(X)$ for each $g\in G$, and $\Psi$ sends a morphism $\theta\colon (X, \alpha)\rightarrow (Y, \beta)$ to $q(\theta)\colon (X, q(\alpha)) \rightarrow (Y, q(\beta))$. We observe that the functor $\Psi$ is exact and that its  essential kernel equals $\mathcal{N}^G$. Therefore, we have the induced functor $\bar{\Psi}\colon \mathcal{A}^G/{\mathcal{N}^G}\rightarrow (\mathcal{A}/\mathcal{N})^G$.

 \begin{cor}\label{cor:equiv}
 Let $G$ be a finite group acting on an abelian category $\mathcal{A}$, and let $\mathcal{N}\subseteq \mathcal{A}$ be a Serre subcategory which is invariant under the action. Keep the notation as above. Then the induced functor
 $$\bar{\Psi}\colon \mathcal{A}^G/{\mathcal{N}^G}\stackrel{\sim}\longrightarrow (\mathcal{A}/\mathcal{N})^G$$
 is an equivalence of categories.
 \end{cor}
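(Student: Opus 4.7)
The plan is to reduce Corollary \ref{cor:equiv} to Proposition \ref{prop:app1} via the monadic identification of equivariant objects given in Proposition \ref{prop:monadic}. Let $M=\oplus_{g\in G} F_g$ be the monad on $\mathcal{A}$ defined by the $G$-action, as computed in Section 3. Since each auto-equivalence $F_g$ on $\mathcal{A}$ is exact (auto-equivalences of abelian categories preserve kernels and cokernels) and $G$ is finite, the endofunctor $M$ is exact, so it qualifies as an exact monad. By Proposition \ref{prop:monadic}, the comparison functor yields an isomorphism $K_{\mathcal{A}}\colon \mathcal{A}^G\stackrel{\sim}\rightarrow M\mbox{-Mod}_\mathcal{A}$.

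The invariance hypothesis $F_g(\mathcal{N})\subseteq \mathcal{N}$ for all $g\in G$ gives exactly $M(\mathcal{N})\subseteq \mathcal{N}$. Under the isomorphism $K_\mathcal{A}$, the Serre subcategory $\mathcal{N}^G\subseteq \mathcal{A}^G$ is carried to $M\mbox{-Mod}_\mathcal{N}\subseteq M\mbox{-Mod}_\mathcal{A}$, since in both cases membership is detected by whether the underlying object of $\mathcal{A}$ lies in $\mathcal{N}$. Next I would observe that the induced $G$-action on $\mathcal{A}/\mathcal{N}$, with data $\{\bar{F}_g,\bar{\varepsilon}_{g,h}\}$, defines the monad $\bar{M}=\oplus_{g\in G}\bar{F}_g$, and this coincides with the induced monad of $M$ (as constructed in Section 4.1), because the quotient functor $q$ is additive and commutes with the formation of the finite direct sum and with the natural transformations $\eta$ and $\mu$. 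Applying Proposition \ref{prop:monadic} once more to the abelian category $\mathcal{A}/\mathcal{N}$ with its inherited $G$-action yields an isomorphism $K_{\mathcal{A}/\mathcal{N}}\colon (\mathcal{A}/\mathcal{N})^G\stackrel{\sim}\rightarrow \bar{M}\mbox{-Mod}_{\mathcal{A}/\mathcal{N}}$.

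Proposition \ref{prop:app1} now produces an equivalence
\[
\bar{\Phi}\colon M\mbox{-Mod}_\mathcal{A}/{M\mbox{-Mod}_\mathcal{N}}\stackrel{\sim}\longrightarrow \bar{M}\mbox{-Mod}_{\mathcal{A}/\mathcal{N}}.
\]
Combining this with the two isomorphisms $K_\mathcal{A}$ and $K_{\mathcal{A}/\mathcal{N}}$ (the former inducing an isomorphism on the quotient categories) yields the desired equivalence $\mathcal{A}^G/\mathcal{N}^G\stackrel{\sim}\rightarrow (\mathcal{A}/\mathcal{N})^G$.

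The one bookkeeping step that requires care, and which I expect to be the main (if modest) obstacle, is checking that this composite equivalence really is the functor $\bar{\Psi}$ defined in the statement, i.e., that the diagram
\[
\xymatrix{
\mathcal{A}^G/{\mathcal{N}^G}\ar[r]^-{\bar{\Psi}}\ar[d]_-{K_\mathcal{A}}^-{\sim} & (\mathcal{A}/\mathcal{N})^G\ar[d]^-{K_{\mathcal{A}/\mathcal{N}}}_-{\sim}\\
M\mbox{-Mod}_\mathcal{A}/{M\mbox{-Mod}_\mathcal{N}}\ar[r]^-{\bar{\Phi}}_-{\sim} & \bar{M}\mbox{-Mod}_{\mathcal{A}/\mathcal{N}}
}
\]
commutes. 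This is a direct object-wise verification: on a $G$-equivariant object $(X,\alpha)$, both compositions produce the $\bar{M}$-module with underlying object $X$ and structure map assembled from the inverses $q(\alpha_g)^{-1}$, using the explicit form of the comparison functor recorded in the proof of Proposition \ref{prop:monadic} and the explicit description of $\bar{\Phi}$ as $(X,\lambda)\mapsto (X,q(\lambda))$. The same direct check on morphisms finishes the argument.
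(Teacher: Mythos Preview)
Your proposal is correct and follows essentially the same approach as the paper's proof: identify the monad $M$ defined by the $G$-action, note that $M$ is exact with $M(\mathcal{N})\subseteq\mathcal{N}$ and that the induced monad $\bar{M}$ coincides with the monad defined by the inherited $G$-action on $\mathcal{A}/\mathcal{N}$, then apply Proposition~\ref{prop:monadic} on both levels and invoke Proposition~\ref{prop:app1}. The paper records the final compatibility $\bar{\Phi}\leftrightarrow\bar{\Psi}$ as a one-line observation, whereas you spell out the commuting square explicitly; this is the same verification.
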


\begin{proof}
Denote by $M$ the monad on $\mathcal{A}$ that is defined by the group action. Then $M$ is exact and by the invariance of $\mathcal{N}$, we have $M(\mathcal{N})\subseteq \mathcal{N}$. We observe that the induced monad $\bar{M}$ on $\mathcal{A}/\mathcal{N}$ coincides with the monad defined by the induced $G$-action on $\mathcal{A}/\mathcal{N}$.

By Proposition \ref{prop:monadic} we identify $\mathcal{A}^G$ with $M\mbox{-Mod}_\mathcal{A}$,  and   $(\mathcal{A}/\mathcal{N})^G$ with $\bar{M}\mbox{-Mod}_{\mathcal{A}/\mathcal{N}}$. Then the equivalence follows from Proposition \ref{prop:app1}. Here, one observes that the functor $\bar{\Phi}$ in  Proposition \ref{prop:app1} corresponds to the functor $\bar{\Psi}$.
\end{proof}

\section{Graded module categories and graded group rings}

In this section, we give the second application of Theorem \ref{thm:Beck}, and  prove that the equivarianzation of the graded module category over a graded ring with respect to a certain degree-shift action is equivalent to the graded module category over the same ring but with a coarser grading; this result is essentially due to  \cite[Theorem 6.4.1]{NV} in a different setup. On the other hand, we prove that the equivariantization of the graded module category over a graded ring with respect to a certain twisting action is equivalent to the graded module category over the same ring but with a refined grading. All modules are right modules.

\subsection{}

Let $G$ be an arbitrary group. Let $R=\oplus_{g\in G} R_g$ be a $G$-graded ring with a unit. Here, the unit $1_R$ lies in $R_e$, and the subgroups $R_g$ of $R$ are called the homogeneous components of degree $g$.

A $G$-graded $R$-module $X$ is an $R$-module with a decomposition $X=\oplus_{g\in G}{X_g}$ into homogeneous components $X_g$ such that $X_g.R_{g'}\subseteq X_{gg'}$, where the dot ``." means the right action.  An element $x$ in $X_g$ is said to be \emph{homogeneous of degree} $g$, denoted by $|x|=g$ or ${\rm deg}\; x=g$. The \emph{grading support} of $X$ is by definition ${\rm gsupp}(X)=\{g\in G\; |\; X_g\neq 0\}$, which is a subset of $G$.

We denote by ${\rm Mod}^G\mbox{-}R$ the abelian category of  $G$-graded $R$-modules, where the homomorphism between graded modules are module homomorphisms that preserve the degrees. More precisely, a homomorphism $f\colon X\rightarrow X'$ is an $R$-module homomorphism  satisfying $f(X_g)\subseteq X'_{g}$, and  we denote by $f_g\colon X_g\rightarrow X'_g$ its restriction to the corresponding homogeneous component. We denote by ${\rm mod}^G\mbox{-}R$ the full subcategory formed by finitely presented graded modules.

For a $G$-graded $R$-module $X$ and $g'\in G$, the \emph{shifted module} $X(g')$ is defined such that $X(g')=X$ as an ungraded $R$-module and that its grading is given by $X(g')_g=X_{g'g}$ for each $g\in G$. This gives rise to an automorphism $(g')\colon {\rm Mod}^G\mbox{-}R\rightarrow {\rm Mod}^G\mbox{-}R$, called the \emph{degree-shift functor}. For example, we consider $R(g')$ as a $G$-graded $R$-module with $1_R$ having degree $g'^{-1}$.  Indeed, the set $\{R(g)\;|\; g\in G\}$ is a set of projective generators in the category ${\rm Mod}^G\mbox{-}R$.

Each subgroup $G'\subseteq G$ has a strict action on ${\rm Mod}^G\mbox{-}R$ by assigning to each $g'\in G'$ the automorphism $F_{g'}=(g'^{-1})$. Such a $G'$-action on ${\rm Mod}^G\mbox{-}R$ is referred as a \emph{degree-shift action}; in this case, $G'$ acts also on ${\rm mod}^G\mbox{-}R$.

Let $\pi\colon G\rightarrow H$ be a homomorphism of groups. We define an $H$-graded ring $\pi_*(R)$ as follows: as an ungraded ring
$\pi_*(R)=R$, while its homogeneous component is given by $\pi_*(R)_h=\oplus_{g\in \pi^{-1}(h)} R_g$ for each $h\in H$; compare \cite[Subsection 1.2]{NV}.  Then we have the abelian category ${\rm Mod}^H\mbox{-}\pi_*(R)$ of $H$-graded $\pi_*(R)$-modules and its full subcategory  ${\rm mod}^H\mbox{-}\pi_*(R)$ consisting of finitely presented modules, both of which carry a natural strict $H$-action by degree-shift.

We define a functor $\pi_*\colon {\rm Mod}^G\mbox{-} R\rightarrow {\rm Mod}^H\mbox{-} \pi_*(R)$ as follows: for a $G$-graded $R$-module $X=\oplus_{g\in G}X_g$, we assign an $H$-graded $\pi_*(R)$-module $\pi_*(X)$ such that $\pi_*(X)=X$ as an ungraded $R$-module and that its homogeneous component is given by $\pi_*(X)_h=\oplus_{g\in \pi^{-1}(h)} X_g$. The functor $\pi_*$ acts on homomorphisms by the identity. We observe that $\pi_*$ sends $R$ to $\pi_*(R)$, and more generally, $\pi_*$ sends $R(g)$ to $\pi_*(R)(h)$ for each $g\in G$ and $h=\pi(g)$. We observe also that the functor $\pi_*$ is exact.

The functor $\pi_*$ has a right adjoint $\pi^*\colon {\rm Mod}^H\mbox{-} \pi_*(R)\rightarrow {\rm Mod}^G\mbox{-}R$, which is defined as follows. For an $H$-graded $\pi_*(R)$-module $Y=\oplus_{h\in H} Y_h$, we define a $G$-graded abelian group $\pi^*(Y)$ such that its  homogeneous component $\pi^*(Y)_{g}=Y_{\pi(g)}$ for each $g\in G$. A homogeneous element $r\in R_{g'}$ acts on an element $y\in \pi^*(Y)_g$ as the given action $y.r$ on $Y$, where the resulting element $y.r\in Y_{\pi(g'g)}$ is viewed now as an element in $\pi^*(Y)_{gg'}$. This defines a $G$-graded $R$-module $\pi^*(Y)$. For an $H$-graded $\pi_*(R)$-module homomorphism $f\colon Y\rightarrow Y'$, the corresponding homomorphism $\pi^*(f)\colon \pi^*(Y)\rightarrow \pi^*(Y')$ sends an element $y\in \pi^*(Y)_g=Y_{\pi(g)}$ to $f(y)\in Y'_{\pi(g)}=\pi^*(Y')_g$ for each $g\in G$. We observe that the functor $\pi^*$ is exact.

The adjoint pair $(\pi_*, \pi^*)$ is given by the following natural isomorphism
\begin{align*}
{\rm Hom}_{{\rm Mod}^H\mbox{-}\pi_*(R)} (\pi_*(X), Y)\stackrel{\sim}\longrightarrow {\rm Hom}_{{\rm Mod}^G\mbox{-}R} (X, \pi^*(Y))
\end{align*}
 which sends $f\colon \pi_*(X)\rightarrow Y$ to $f'\colon X\rightarrow \pi^*(Y)$ such that $f'_g\colon X_g\rightarrow \pi^*(Y)_g=Y_{\pi(g)}$ is the restriction of $f_{\pi(g)}\colon \pi_*(X)_{\pi(g)}\rightarrow Y_{\pi(g)}$ to the direct summand $X_g$. It follows that the unit $\eta\colon {\rm Id}_{{\rm Mod}^G\mbox{-}R}\rightarrow \pi^*\pi_*$ is given such that $(\eta_X)_g\colon X_g\rightarrow \pi^*\pi_*(X)_g=\oplus_{g'\in \pi^{-1}(\pi(g))} X_{g'}$ is the inclusion of $X_g$. The counit $\epsilon\colon \pi_*\pi^*\rightarrow {\rm Id}_{{\rm Mod}^H\mbox{-}\pi_*(R)}$ is given such that $(\epsilon_Y)_h\colon \pi_*\pi^*(Y)_h=\oplus_{g\in \pi^{-1}(h)} Y_{\pi(g)}\rightarrow Y_h$ maps each direct summand $Y_{\pi(g)}$ identically to $Y_h$ if $h\in \pi(G)$, and that $(\epsilon_Y)_h=0$ otherwise.

 \begin{lem}\label{lem:monadN}
 Let $N$ be the kernel of $\pi\colon G\rightarrow H$. Then the monad defined by the adjoint pair $(\pi_*, \pi^*; \eta, \epsilon)$ coincides with the monad defined by the degree-shift action of $N$ on ${\rm Mod}^G\mbox{-}R$.
 \end{lem}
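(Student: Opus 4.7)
The plan is to compute the endofunctor $M=\pi^*\pi_*$ explicitly and identify it, as a monad, with the $N$-action monad $M_N=\bigoplus_{n\in N}F_n$ via a natural isomorphism $\Theta$ obtained from the reindexing $n\mapsto n^{-1}$ on the summands.

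First I would unfold definitions in each degree $g\in G$: on the one hand, $M(X)_g=\pi_*(X)_{\pi(g)}=\bigoplus_{g'\in \pi^{-1}(\pi(g))}X_{g'}=\bigoplus_{n\in N}X_{ng}$, parameterizing $g'=ng$; on the other hand, $M_N(X)_g=\bigoplus_{n\in N}F_n(X)_g=\bigoplus_{n\in N}X_{n^{-1}g}$. Both describe the same sum $\bigoplus_{g'\in Ng}X_{g'}$, but with opposite parameterizations of $N$. I would then define $\Theta_X\colon M_N(X)\to M(X)$ by sending the $n$-th summand $X_{n^{-1}g}$ of $M_N(X)_g$ identically to the $n^{-1}$-th summand $X_{n^{-1}g}$ of $M(X)_g$. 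A routine check confirms that $\Theta_X$ is a morphism of $G$-graded $R$-modules and is natural in $X$.

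Next I would verify that the units and multiplications agree under $\Theta$. The units of both monads include $X_g$ into the $e$-indexed summand in degree $g$, so they match since $e=e^{-1}$. For the multiplications, the explicit counit of $(\pi_*,\pi^*)$ yields that $\mu^M_X\colon M^2(X)_g\to M(X)_g$ sends the $(n_1,n_2)$-summand $X_{n_2n_1g}$ identically to the $n_2n_1$-summand of $M(X)_g$, while strictness of the $N$-action gives that $\mu^{M_N}_X$ sends the $(n_1,n_2)$-summand of $M_N^2(X)_g$ identically to the $n_1n_2$-summand of $M_N(X)_g$. Chasing the induced isomorphism $M_N^2\to M^2$ coming from $\Theta$, the $(n_1,n_2)$-summand of $M_N^2$ corresponds to the $(n_1^{-1},n_2^{-1})$-summand of $M^2$, so its image under $\mu^M$ lies in the $n_2^{-1}n_1^{-1}=(n_1n_2)^{-1}$-summand of $M(X)_g$, matching the image of $\Theta\circ\mu^{M_N}$.

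The main obstacle is keeping straight the nested indexings on $M^2$ and $M_N^2$: since $N$ need not be abelian, the reindexing $n\mapsto n^{-1}$ reverses the order of multiplication, and one must verify that this reversal exactly matches the order reversal appearing in the nested parameterization of $M^2$ coming from composing $\pi^*\pi_*$ with itself. Apart from this careful bookkeeping, the verification reduces to direct manipulation of the formulas for $\pi_*$, $\pi^*$, the counit $\epsilon$, and the strict $N$-action.
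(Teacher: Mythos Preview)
Your proposal is correct and takes essentially the same approach as the paper. The paper's proof is terser—it observes that $\pi^*\pi_*(X)_g=\bigoplus_{g'\in\pi^{-1}(\pi(g))}X_{g'}=\bigoplus_{n\in N}X(n^{-1})_g$, so the two endofunctors are literally equal, and leaves the matching of unit and multiplication as a ``direct'' verification—whereas you make the reindexing $n\mapsto n^{-1}$ explicit via $\Theta$ and carefully spell out the same bookkeeping for $\eta$ and $\mu$.
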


\begin{proof}
Denote by $(M, \eta', \mu)$ the monad that is defined by the degree-shift action of $N$ on ${\rm Mod}^G\mbox{-}R$. As we computed above, $\pi^*\pi_*(X)_g=\oplus_{g'\in \pi^{-1}(\pi(g))} X_{g'}=\oplus_{n\in N} X(n^{-1})_g$ for each $g\in G$. Indeed, this proves that the endofunctor $\pi^*\pi_*$ equals $M$, which is by definition $\oplus_{n\in N} F_n=\oplus_{n\in N}(n^{-1})$. It is direct to verify that $\eta=\eta'$ and $\mu=\pi^* \epsilon \pi_*$.
\end{proof}

We consider the degree-shift action of $N$ on ${\rm Mod}^G\mbox{-}R$, and thus the category $({\rm Mod}^G\mbox{-}R)^N$ of $N$-equivariant objects.  Consider the following functor
\begin{align}\label{equ:theta}
\Theta\colon  {\rm Mod}^H\mbox{-}\pi_*(R)\longrightarrow ({\rm Mod}^G\mbox{-}R)^N
\end{align}
sending $Y$ to $\Theta(Y)=(\pi^*(Y), {\rm Id})$, where ${\rm Id}_n\colon \pi^*(Y)\rightarrow \pi^*(Y)(n^{-1})$ is the identity for each $n\in N$; here, we observe that $\pi^*(Y)(n^{-1})=\pi^*(Y)$. The functor $\Theta$  sends a homomorphism $f$ to $\pi^*(f)$.

The main observation is as follows. We mention that it is implicitly due to \cite[Theorem 6.4.1]{NV}; see Corollary \ref{cor:NV}.

\begin{prop}\label{prop:app2}
Keep the notation as above. Then the functor $\Theta$ is an equivalence if and only if $\pi\colon G\rightarrow H$ is surjective. In this case, if in addition $N$ is finite, the equivalence $\Theta$ restricts to an equivalence
$$\Theta\colon {\rm mod}^H\mbox{-}\pi_*(R)\stackrel{\sim}\longrightarrow ({\rm mod}^G\mbox{-}R)^N.$$
\end{prop}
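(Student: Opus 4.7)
The plan is to recognize $\Theta$ as the comparison functor from Beck's theorem and then apply Theorem \ref{thm:Beck}. By Lemma \ref{lem:monadN}, the monad $M = \pi^*\pi_*$ defined by the adjoint pair $(\pi_*, \pi^*; \eta, \epsilon)$ coincides with the monad defined by the degree-shift $N$-action on ${\rm Mod}^G\mbox{-}R$. Since module categories admit arbitrary coproducts, Proposition \ref{prop:monadic} together with Remark \ref{rem:monadic} identifies $({\rm Mod}^G\mbox{-}R)^N$ with $M\mbox{-Mod}_{{\rm Mod}^G\mbox{-}R}$. I would then use (\ref{equ:K}) and the explicit formula for $\epsilon$ to verify that $\Theta$ coincides with the comparison functor $K$ under this identification; the main check is that $\pi^*(Y)(n^{-1}) = \pi^*(Y)$ literally (using $\pi(n^{-1}g) = \pi(g)$) and that the $n$-th component of $\pi^*\epsilon_Y$ is the identity, which matches the definition of $\Theta$.

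Once $\Theta = K$ is established and $\pi^*$ is known to be exact, Theorem \ref{thm:Beck} reduces the first assertion to asking when $\pi^*$ is faithful. For an exact functor between abelian categories, faithfulness is equivalent to reflecting zero objects; one computes directly that $\pi^*(Y) = 0$ iff $Y$ is supported outside $\pi(G)$, so $\pi^*$ is faithful if and only if $\pi$ is surjective. For the nontrivial direction, any nonzero $\pi_*(R)_e$-module placed in a single degree $h_0 \in H \setminus \pi(G)$ furnishes a nonzero $Y$ with $\pi^*(Y) = 0$.

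For the finitely presented statement, assume $\pi$ is surjective and $N$ is finite. In one direction, the isomorphism $\pi_*(R)(h) \cong \pi_*(R(g))$ for any $g \in \pi^{-1}(h)$ gives $\pi^*(\pi_*(R)(h)) \cong \bigoplus_{n \in N} R(gn^{-1})$, a finite direct sum of shifts of $R$; applying the exact functor $\pi^*$ to a finite presentation of $Y$ then produces one of $\pi^*(Y)$. Conversely, given $Y$ with $\pi^*(Y)$ finitely presented, surjectivity of $\pi$ and finiteness of $N$ yield $\pi_*\pi^*(Y)_h \cong Y_h^{|N|}$ in every degree, hence $\pi_*\pi^*(Y) \cong Y^{|N|}$ as $\pi_*(R)$-modules. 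Since $\pi_*$ is exact and sends $R(g)$ to $\pi_*(R)(\pi(g))$, it preserves finite presentations, so $\pi_*\pi^*(Y) \cong Y^{|N|}$ is finitely presented, and thus so is $Y$ as a direct summand. The only step requiring real care is the identification $\Theta = K$ in the first paragraph; after that, both halves of the proposition follow from Theorem \ref{thm:Beck} and the explicit description of the functors involved.
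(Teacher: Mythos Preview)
Your approach to the main assertion is essentially the paper's: identify $\Theta$ with the comparison functor via Lemma~\ref{lem:monadN}, Proposition~\ref{prop:monadic} and Remark~\ref{rem:monadic}, then apply Theorem~\ref{thm:Beck} to reduce to faithfulness of $\pi^*$. Your characterisation of faithfulness (via reflecting zero objects rather than directly on morphisms) is a harmless variant of the paper's argument.

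The one genuine gap is in your converse for the finitely presented statement. From the degree-wise identification $\pi_*\pi^*(Y)_h \cong Y_h^{|N|}$ you pass to ``hence $\pi_*\pi^*(Y)\cong Y^{|N|}$ as $\pi_*(R)$-modules''. This implication is not automatic: a degree-wise isomorphism of graded abelian groups does not in general respect the module structure. Concretely, for homogeneous $r\in R_{g'}$ the action on $\pi_*\pi^*(Y)$ moves the $g$-indexed summand $Y_h$ to the $gg'$-indexed summand of $Y_{h\pi(g')}$, and there is no canonical way to match these indexings with a fixed copy of $Y$ unless one has a group-theoretic section of $\pi$; even then the naive projections fail to be $\pi_*(R)$-linear. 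So the isomorphism $\pi_*\pi^*(Y)\cong Y^{|N|}$ needs an argument you have not supplied, and it is not clear it holds in the stated generality.

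The paper sidesteps this by invoking Lemma~\ref{lem:fp}(2), whose proof uses only that the counit $\epsilon_Y\colon \pi_*\pi^*(Y)\to Y$ is \emph{surjective} (which follows from surjectivity of $\pi$): this gives $Y$ finitely generated whenever $\pi^*(Y)$ is, and one bootstraps to finitely presented via a short exact sequence $0\to K\to P\to Y\to 0$ with $P$ finitely generated projective. Your argument is easily repaired along these lines: replace the unproven module isomorphism by the surjectivity of $\epsilon_Y$ and the two-step finiteness argument.
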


\begin{proof}
We claim that the functor $\pi^*$ is faithful if and only if $\pi$ is surjective. Recall that $\pi^*(Y)_g=Y_{\pi(g)}$ for each $g\in G$, and that for a morphism $f\colon Y\rightarrow Y'$, $\pi^*(f)_g=f_{\pi(g)}$. In particular, $\pi^*(f)=0$ if and only if $f_{h}=0$ for all $h\in \pi(G)$. This proves the ``if" part of the claim. On the other hand, if $\pi$ is not surjective, we take $h\in H$ that is not contained in $\pi(G)$. Consider $Y=\pi_*(R)(h)$ in ${\rm Mod}^H\mbox{-}\pi_*(R)$. Then $\pi^*(Y)=0$, or equivalently, $\pi^*({\rm Id}_Y)=0$. This shows that $\pi^*$ is not faithful in this case.

Set $\mathcal{A}={\rm Mod}^G\mbox{-}R$. Denote by $M$ the monad on $\mathcal{A}$ defined by the adjoint pair $(\pi_*, \pi^*)$. We consider the associated comparison functor $K\colon {\rm Mod}^H\mbox{-}\pi_*(R)\rightarrow M\mbox{-Mod}_{\mathcal{A}}$. By Lemma \ref{lem:monadN}, Proposition \ref{prop:monadic}  and Remark \ref{rem:monadic}, we identify $ M\mbox{-Mod}_{\mathcal{A}}$ with $\mathcal{A}^N$, with which the functor $\Theta$ is identified with this comparison functor $K$. Here, one compares the definition of $\Theta$ in (\ref{equ:theta})  with the construction of $K$ in (\ref{equ:K}). Recall that the functor $\pi^*$ is exact. Then we apply Theorem \ref{thm:Beck} to the adjoint pair $(\pi_*, \pi^*)$,  and infer that $\Theta$ is an equivalence if and only if $\pi^*$ is faithful, which by the above claim is equivalent to the surjectivity of $\pi$.

The restricted equivalence follows from Lemma \ref{lem:fp}(2).
\end{proof}

The following fact is well known.

\begin{lem}\label{lem:fp}
Assume that the homomorphism $\pi\colon G\rightarrow H$ is surjective with its kernel $N$ finite. Let $X$ be a $G$-graded $R$-module and $Y$  an $H$-graded $\pi_*(R)$-module. Keep the notation as above. Then the following statements hold.
\begin{enumerate}
\item The $G$-graded $R$-module $X$ is finitely presented if and only if so is $\pi_*(X)$.
\item The $H$-graded $\pi_*(R)$-module $Y$ is finitely presented if and only if so is $\pi^*(Y)$.
\end{enumerate}
\end{lem}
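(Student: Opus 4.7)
The plan is to prove first the finitely generated analogues of (1) and (2), and then bootstrap to finite presentation using the exactness of $\pi_*$ and $\pi^*$ together with the explicit behaviour of these functors on projective generators.

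The key computations are $\pi_*(R(g)) = \pi_*(R)(\pi(g))$ and $\pi^*(\pi_*(R)(h)) = \bigoplus_{g \in \pi^{-1}(h)} R(g)$, both immediate from the definition of the homogeneous components given in the text. The second decomposition is a \emph{finite} direct sum precisely because $|N|$ is finite, and this is where the finiteness hypothesis will enter the argument. It follows at once that $\pi_*$ and $\pi^*$ each carry a finitely generated projective generator on one side to a finitely generated (and still projective) object on the other.

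For the finitely generated version of each statement, the ``only if'' directions are immediate from the previous paragraph combined with exactness of $\pi_*$ and $\pi^*$. For the ``if'' direction of (1), if $\pi_*(X)$ is generated by homogeneous $y_1, \ldots, y_m$ with $y_j \in \pi_*(X)_{h_j} = \bigoplus_{g \in \pi^{-1}(h_j)} X_g$, I will decompose $y_j = \sum_{g \in \pi^{-1}(h_j)} y_{j,g}$; this is a finite sum since $|N| < \infty$. Letting $X' \subseteq X$ be the $G$-graded submodule generated by the $\{y_{j,g}\}$, the inclusion $\pi_*(X') \subseteq \pi_*(X)$ contains all the $y_j$ and is therefore an equality, and since the $G$-graded pieces of $X$ are canonically recovered from the decomposition of its $H$-graded pieces, this forces $X' = X$. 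For the ``if'' direction of (2), if $\pi^*(Y)$ is generated by homogeneous $z_1, \ldots, z_m$ with $z_j \in \pi^*(Y)_{g_j} = Y_{\pi(g_j)}$, then the $\pi_*(R)$-submodule $Y' \subseteq Y$ generated by the $z_j$ in degrees $\pi(g_j)$ satisfies $\pi^*(Y') = \pi^*(Y)$; the surjectivity of $\pi$ then yields $Y' = Y$.

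To promote these to the finite presentation statements, assume for instance that $\pi_*(X)$ is finitely presented. By the finitely generated case, $X$ is finitely generated, so there is a surjection $p \colon P \rightarrow X$ with $P$ a finite direct sum of modules $R(g_j)$. Applying the exact functor $\pi_*$ gives a surjection $\pi_*(p) \colon \pi_*(P) \rightarrow \pi_*(X)$ with $\pi_*(P)$ finitely generated projective, so the hypothesis that $\pi_*(X)$ is finitely presented forces $\ker \pi_*(p) = \pi_*(\ker p)$ to be finitely generated; the finitely generated equivalence just established then gives $\ker p$ finitely generated, so $X$ is finitely presented. The converse direction of (1) and both directions of (2) are entirely analogous; for (2) the role of $\pi_*(P)$ is played by $\pi^*(Q)$, which is still a finite direct sum of shifts $R(g)$ by the second key computation. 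I do not anticipate any serious obstacle; the finiteness of $N$ is used exactly in the finite decomposition $y_j = \sum y_{j,g}$ and in ensuring $\pi^*(\pi_*(R)(h))$ is finitely generated, and those are precisely the steps that fail without it.
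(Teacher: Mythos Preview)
Your proposal is correct. The key computations on $\pi_*(R(g))$ and $\pi^*(\pi_*(R)(h))$ match the paper's, and your bootstrapping from ``finitely generated'' to ``finitely presented'' via exactness and kernels is exactly the argument the paper uses for the ``if'' part of (2).

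Where you genuinely diverge is in how you establish the finitely generated step. For the ``if'' part of (1), the paper bypasses your decomposition of generators entirely: it observes that the unit $\eta_X\colon X\to \pi^*\pi_*(X)$ is a split monomorphism, and since $\pi^*\pi_*(X)$ is finitely presented by the ``only if'' parts of (1) and (2), the direct summand $X$ is finitely presented in one stroke. Your route---splitting each generator $y_j$ of $\pi_*(X)$ into its finitely many $G$-homogeneous pieces and checking that $\pi_*$ reflects equality of submodules---is more elementary and perfectly valid, but longer. For the ``if'' direction of (2), the paper likewise uses the adjunction: it argues that $\pi_*\pi^*(Z)$ is finitely generated and that the counit $\epsilon_Z\colon \pi_*\pi^*(Z)\to Z$ is surjective (this is where surjectivity of $\pi$ enters), whereas you trace generators of $\pi^*(Y)$ back to $Y$ by hand. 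The two approaches are equivalent in strength; the paper's buys brevity by exploiting the unit and counit systematically, while yours has the virtue of making no appeal to the adjunction once the two key formulas are in hand.
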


\begin{proof}
Recall that $\pi_*(R(g))=\pi_*(R)(h)$ for each $g\in G$ and $h=\pi(g)$. Thus the exact functor $\pi_*$ preserves finitely generated projective modules. Then the ``only if" part of (1) follows.

We observe that $\pi^*(\pi_*(R)(h)) \simeq \oplus_{n\in N} R(ng)$ for $h=\pi(g)$, and that $N$ is finite. It follows that the exact functor $\pi^*$ preserves finitely generated projective modules. The ``only if" part of (2) follows.

For the ``if" part of (1), we assume that $\pi_*(X)$ is finitely presented, and thus so is $\pi^*\pi_*(X)$. The unit $\eta_X\colon X\rightarrow \pi^*\pi_*(X)$ is a split monomorphism. It follows that $X$ is finitely presented.

It remains to prove the ``if" part of (2). We claim that for each $H$-graded $\pi_*(R)$-module $Z$, if  $\pi^*(Z)$ is finitely generated, so is $Z$. Indeed, we recall that the exact functor $\pi_*$ preserves finitely generated projective modules. It follows that $\pi_*\pi^*(Z)$ is finitely generated. Since the counit $\epsilon_Z\colon \pi_*\pi^*(Z)\rightarrow Z$ is surjective, we infer that $Z$ is finitely generated. Here, for the surjectivity of $\epsilon_Z$ we use the surjectivity of the homomorphism $\pi\colon G\rightarrow H$.

We assume that $\pi^*(Y)$ is finitely presented. By the claim $Y$ is finitely generated. Take an exact sequence $0\rightarrow K\rightarrow P\rightarrow Y\rightarrow 0$ in ${\rm Mod}^H\mbox{-}\pi_*(R)$ with $P$ finitely generated projective. Applying the exact functor $\pi^*$ and the fact that $\pi^*(P)$ is finitely generated projective, we infer that $\pi^*(K)$ is finitely generated. By the claim again, we have that $K$ is finitely generated. This shows that $Y$ is finitely presented.
\end{proof}

\subsection{} We will relate Proposition \ref{prop:app2} to \cite[Theorem 6.4.1]{NV}.

Let $N$ be a normal subgroup of a group $G$. Let $R=\oplus_{g\in G} R_g$ be a $G$-graded ring. The  \emph{graded group ring} $R^{\rm gr}[N]$ is defined as follows:  $R^{\rm gr}[N]=\oplus_{n\in N} Ru_n$ is a free left $R$-module with a basis $\{u_{n}\;|\;  n\in N\}$, which is $G$-graded by means of $|ru_n|=|r| \cdot n$ for a homogeneous element $r$ in $R$; the multiplication is given as follows $$(ru_n) (r'u_{n'})=rr'u_{(|r'|^{-1}\cdot n\cdot |r|')n'}.$$
 We mention that the elements $u_n$ are invertible.

 We observe that $R^{\rm gr}[N]$ is a $G$-graded ring, and that the canonical map $R\rightarrow R^{\rm gr}[N]$, sending $r$ to $ru_{e}$, identifies $R$ as a $G$-graded subring of $R^{\rm gr}[N]$. We refer for the details to \cite[Subsection 6.1]{NV}.

The following result establishes the link between the graded group ring $R^{\rm gr}[N]$ and the category of $N$-equivariant objects in ${\rm Mod}^G\mbox{-}R$. We emphasize that here we consider the degree-shift action of $N$ on ${\rm Mod}^G\mbox{-}R$.

\begin{prop}\label{prop:iso}
Keep the notation as above. Then there is an isomorphism  of categories
$$({\rm Mod}^G\mbox{-}R)^N\stackrel{\sim}\longrightarrow {\rm Mod}^G\mbox{-}(R^{\rm gr}[N]).$$
\end{prop}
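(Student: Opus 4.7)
The plan is to construct an explicit pair of mutually inverse functors between the two categories. The key observation is that $R\hookrightarrow R^{\rm gr}[N]$ as a graded subring and each $u_n$ is a homogeneous unit of degree $n$; hence a $G$-graded right $R^{\rm gr}[N]$-module is the same data as a $G$-graded right $R$-module equipped with invertible degree-shifting operators implementing the right action by the $u_n$'s, subject to the product and commutation rules in $R^{\rm gr}[N]$.

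For the forward functor, given an $N$-equivariant object $(X,\alpha)$ with $\alpha_n\colon X\to X(n^{-1})$, I keep the underlying graded $R$-module and set
\[
x\cdot u_n \;:=\; \alpha_{knk^{-1}}^{-1}(x) \quad \text{for } x\in X_k;
\]
normality of $N\trianglelefteq G$ ensures $knk^{-1}\in N$, and $\alpha_{knk^{-1}}^{-1}$ read as an ungraded $R$-automorphism of $X$ sends $X_k$ to $X_{kn}$, matching $|x\cdot u_n|=kn$. For the reverse functor, given $Y\in{\rm Mod}^G\mbox{-}(R^{\rm gr}[N])$, I restrict the action to $R$ and set
\[
\alpha_n(y)\;:=\;y\cdot u_{k^{-1}n^{-1}k} \quad \text{for } y\in Y_k,
\]
which lies in $Y_{n^{-1}k}=Y(n^{-1})_k$ and is invertible because each $u_n$ is a unit in $R^{\rm gr}[N]$ (one checks $u_n u_{n^{-1}}=u_e=1$ from the product formula).

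The axioms are checked by direct calculations. Writing $\psi_m=\alpha_m^{-1}$ as an ungraded $R$-endomorphism of $X$, inverting the strict cocycle $\alpha_{nm}=F_n(\alpha_m)\circ\alpha_n$ and noting that $F_n$ acts as the identity on underlying maps yields $\psi_a\circ\psi_b=\psi_{ab}$; together with a short conjugation computation this delivers associativity $(x\cdot u_{n_1})\cdot u_{n_2}=x\cdot u_{n_1n_2}$ from $u_{n_1}u_{n_2}=u_{n_1n_2}$, while the $R$-linearity of each $\alpha_m$ matches the commutation rule $u_n r=r u_{|r|^{-1}n|r|}$ read off from the multiplication in $R^{\rm gr}[N]$. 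The reverse direction is verified by entirely symmetric computations, where the cocycle $\alpha_{nn'}=F_n(\alpha_{n'})\circ\alpha_n$ follows from $u_{n^{-1}}u_{n'^{-1}}=u_{(nn')^{-1}}$ after the same conjugation bookkeeping. Both assignments act functorially on morphisms (an $R$-linear graded map commutes with every $\alpha_n$ if and only if it respects every right $u_n$-action), and the two round trips reduce to the identity on objects by expanding the formulas and invoking $\alpha_e={\rm Id}$ together with the observation that, for a strict action, $\alpha_{n^{-1}}^{-1}=\alpha_n$ as ungraded maps. This upgrades the result to a strict isomorphism of categories rather than a mere equivalence.

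The main obstacle is the bookkeeping around normality: the equivariant structure $\alpha_n$ shifts degrees on the left, sending $X_h$ into $X_{n^{-1}h}$, whereas right multiplication by $u_n$ shifts degrees on the right, sending $X_h$ into $X_{hn}$. These two are only translatable into each other after conjugating the degree $k$ via $k(\cdot)k^{-1}$, which stays inside $N$ precisely because $N\trianglelefteq G$. Once the conjugations are correctly installed in the defining formulas, all remaining verifications are routine calculations in $R^{\rm gr}[N]$ and in the category of equivariant objects.
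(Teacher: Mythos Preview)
Your proposal is correct and follows essentially the same approach as the paper: both construct the explicit functor $\Delta$ sending $(X,\alpha)$ to the $R^{\rm gr}[N]$-module with $x\cdot u_n=\alpha_{|x|\,n\,|x|^{-1}}^{-1}(x)$, and the inverse functor via $\beta_n(y)=y\cdot u_{|y|^{-1}n^{-1}|y|}$, with the conjugation by the degree being the key bookkeeping. You supply more detail on the associativity and round-trip verifications that the paper dismisses as routine, but the construction and strategy are identical.
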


\begin{proof}
Recall that an $N$-equivariant object $(X, \alpha)$ is a $G$-graded $R$-module $X$ with isomorphisms $\alpha_n\colon X\rightarrow F_nX=X(n^{-1})$ in ${\rm Mod}^G\mbox{-}R$, subject to the relations $F_n(\alpha_{n'})\circ \alpha_n=\alpha_{nn'}$. We also view $\alpha_n$ as an automorphism on $X$  of degree $n^{-1}$.

We define a functor $\Delta \colon ({\rm Mod}^G\mbox{-}R)^N\stackrel{\sim}\longrightarrow {\rm Mod}^G\mbox{-}(R^{\rm gr}[N])$ as follows. To an $N$-equivariant object $(X, \alpha)$, we associate a $G$-graded $R^{\rm gr}[N]$-module $\Delta(X, \alpha)=X$ such that the $R$-action is the same as $X$ and that $$x.u_n=(\alpha_{|x|\cdot n\cdot |x|^{-1}})^{-1}(x)$$
for each homogeneous element $x$ in $X$ and $n\in N$. It is rountine to verify that this defines a $G$-graded $R^{\rm gr}[N]$-module structure on $X$. The functor $\Delta$ acts by the identity on morphisms. Roughly speaking, the functor $\Delta$ rearranges the isomorphisms $\alpha_n$'s  into the action of the invertible elements $u_n$'s.

The inverse functor of $\Delta$ associates to a $G$-graded $R^{\rm gr}[N]$-module $Y$ the $N$-equivaraint object $(Y, \beta)$, where the isomorphism $\beta_{n}\colon Y\rightarrow Y(n^{-1})$ is given by $\beta_n(y)=y.(u_{|y|^{-1}\cdot n^{-1}\cdot |y|})$ for each homogeneous element $y\in Y$.
\end{proof}

We combine Propositions \ref{prop:app2} and \ref{prop:iso} to recover the following result.

\begin{cor}\label{cor:NV} {\rm (\cite[Theorem 6.4.1]{NV})} Let $R=\oplus_{g\in G}R_g$ be a $G$-graded ring and let $N\subseteq G$ be a normal subgroup. Consider the canonical projection $\pi\colon G\rightarrow G/N$. Then there is an equivalence of categories
$${\rm Mod}^{G/N}\mbox{-}\pi_*(R)\stackrel{\sim}\longrightarrow {\rm Mod}^G\mbox{-}(R^{\rm gr}[N]).$$
\end{cor}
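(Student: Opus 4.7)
The plan is simply to string together the two preceding propositions of this section. Take the canonical projection $\pi\colon G\to G/N$, which is automatically surjective with kernel exactly $N$. Applying Proposition \ref{prop:app2} with $H=G/N$ yields an equivalence of categories
$$\Theta\colon {\rm Mod}^{G/N}\mbox{-}\pi_*(R)\stackrel{\sim}\longrightarrow ({\rm Mod}^G\mbox{-}R)^N.$$
Here no finiteness assumption on $N$ is required, because we are working in the unbounded module category: ${\rm Mod}^G\mbox{-}R$ admits all small coproducts, so the induction functor and defined monad exist via Remark \ref{rem:monadic}, and the proof of Proposition \ref{prop:app2} goes through verbatim for possibly infinite $N$ (the only input is the exactness of $\pi^*$ together with its faithfulness, which is equivalent to surjectivity of $\pi$).

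Next, apply Proposition \ref{prop:iso} with the given normal subgroup $N\subseteq G$ and the degree-shift $N$-action on ${\rm Mod}^G\mbox{-}R$ to obtain an isomorphism of categories
$$\Delta\colon ({\rm Mod}^G\mbox{-}R)^N\stackrel{\sim}\longrightarrow {\rm Mod}^G\mbox{-}(R^{\rm gr}[N]).$$
The composition $\Delta\circ \Theta$ is the required equivalence.

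There is no real obstacle: the proof is a one-line composition once one observes that the hypotheses of Proposition \ref{prop:app2} are satisfied (surjectivity of $\pi$ and computation of its kernel) and that Proposition \ref{prop:iso} produces the remaining half of the comparison. The only conceptual point worth noting is the reconciliation between the two group actions implicit in the statement: the $N$-action appearing in Proposition \ref{prop:app2} and the $N$-action appearing in Proposition \ref{prop:iso} are the \emph{same} degree-shift action on ${\rm Mod}^G\mbox{-}R$, so no compatibility check is needed beyond matching conventions.
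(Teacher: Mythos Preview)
Your proof is correct and matches the paper's approach exactly: the paper simply states that the corollary follows by combining Propositions \ref{prop:app2} and \ref{prop:iso}. Your additional remarks about handling possibly infinite $N$ via Remark \ref{rem:monadic} and about the identity of the two degree-shift actions are accurate clarifications of points the paper leaves implicit.
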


\subsection{} We will show that in the case that $N$ is a finite abelian group which is a direct summand of $G$, one might recover the category of $G$-graded $R$-modules from the category of $G/N$-graded $\pi_*(R)$-modules via the equivariantization with respect to a certain twisting action; see Proposition \ref{prop:recover}.

Throughout this subsection, we assume that $A$ is a finite abelian group. Let $k$ be a splitting field of $A$. In other words, the group algebra $kA$ is isomorphic to a direct product of copies of $k$; in particular, the characteristic  of the field $k$ does not divide the order of $A$.   Denote by $\widehat{A}$ the \emph{character group} of $A$, that is, the group of linear characters of $A$ over $k$.

Let $V$ be a vector space over $k$. By a \emph{linear $\widehat{A}$-action} on $V$ we mean a group homomorphism from $\widehat{A}$ to the general linear group of $V$. By an \emph{$A$-gradation} on $V$, we mean a decomposition $V=\oplus_{a\in A} V_a$ into subspaces indexed by $A$. We have the following well-known one-to-one correspondence:
\begin{align}\label{equ:corres}
\{A\mbox{-gradations on }V\} \longleftrightarrow \{\mbox{linear }\widehat{A}\mbox{-actions on }V\}.
\end{align}
The correspondence identifies an $A$-gradation $V=\oplus_{a\in A} V_a$ with the linear  $\widehat{A}$-action given by $\chi.v=\chi(a)v$ for any $\chi \in \widehat{A}$ and $v\in V_a$. Here, $\chi.v$ denotes the left action of $\chi$ on $v$.

We will consider a restriction of this correspondence. Let $H$ be an arbitrary group and let $R=\oplus_{h\in H} R_h$ be an $H$-graded algebra over $k$. By an $\widehat{A}$-\emph{action as graded automorphisms} on $R$ we mean a group homomorphism from $\widehat{A}$ to the automorphism group  of $R$ as an $H$-graded algebra.

Consider the canonical projection $\pi\colon H\times A\rightarrow H$ which sends $(h, a)$ to $h$. An $A$-\emph{refinement} of $R$ means an $(H\times A)$-graded algebra $\bar{R}$ such that $\pi_*(\bar{R})=R$, or equivalently, each homogeneous component $R_h=\oplus_{a\in A} \bar{R}_{(h, a)}$ has an $A$-gradation such that $rr'\in \bar{R}_{(hh', aa')}$ for any elements $r\in \bar{R}_{(h, a)}$ and $r'\in \bar{R}_{(h', a')}$.

The following result is well known; compare \cite[Proposition 1.3.13 and Remarks 1.3.14]{NV}.

\begin{lem}\label{lem:corres}
Let $R$ be an $H$-graded algebra. Then there is a one-to-one correspondence
\begin{align}\label{equ:corres2}
\{A\mbox{-refinements of }R\} \longleftrightarrow \{\widehat{A}\mbox{-actions  as graded automorphisms on }R\},
\end{align}
which identifies an $A$-refinement $\bar{R}$ with the $\widehat{A}$-action given by $\chi.r=\chi(a)r$ for $\chi\in \widehat{A}$ and  $r\in \bar{R}_{(h, a)}$.
\end{lem}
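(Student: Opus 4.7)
The plan is to reduce (\ref{equ:corres2}) to the already-known correspondence (\ref{equ:corres}), applied homogeneous component by homogeneous component, and then to verify that the multiplicative structure of $R$ translates the refinement condition on one side into the algebra-automorphism condition on the other.

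First, I would observe that giving an $A$-refinement $\bar{R}$ of $R$ amounts to giving an $A$-gradation $R_h=\oplus_{a\in A}\bar{R}_{(h,a)}$ on every homogeneous component $R_h$ such that $\bar{R}_{(h,a)}\cdot \bar{R}_{(h',a')}\subseteq \bar{R}_{(hh',aa')}$ for all $h,h'\in H$ and $a,a'\in A$; the unit condition $1_R\in \bar{R}_{(e,e)}$ will fall out automatically, as explained below. On the other side, giving an $\widehat{A}$-action on $R$ as graded algebra automorphisms amounts to a linear $\widehat{A}$-action on the vector space $R$ that preserves each $R_h$, is multiplicative, and fixes $1_R$. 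Applying (\ref{equ:corres}) to each $R_h$ separately then gives a canonical bijection between $A$-gradations on the collection $\{R_h\}_{h\in H}$ and linear $\widehat{A}$-actions on $R$ that preserve every $R_h$; under it, a character $\chi$ acts on $r\in \bar{R}_{(h,a)}$ by the scalar $\chi(a)$.

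The substantive step is matching the remaining conditions. If $\bar{R}$ is an $A$-refinement and $r\in \bar{R}_{(h,a)}$, $r'\in \bar{R}_{(h',a')}$, then $rr'\in \bar{R}_{(hh',aa')}$, so
\[
\chi.(rr')=\chi(aa')\,rr'=\chi(a)\chi(a')\,rr'=(\chi.r)(\chi.r'),
\]
and the resulting $\chi$ is a graded algebra automorphism. Conversely, suppose the linear $\widehat{A}$-action acts by graded algebra automorphisms, and take $r\in \bar{R}_{(h,a)}$, $r'\in \bar{R}_{(h',a')}$. Then $rr'\in R_{hh'}$ is fixed by every $\chi\in\widehat{A}$ up to the scalar $(\chi.r)(\chi.r')/(rr')=\chi(a)\chi(a')=\chi(aa')$, so by (the injectivity half of) (\ref{equ:corres}) applied to $R_{hh'}$ we conclude $rr'\in \bar{R}_{(hh',aa')}$. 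Thus the two operations are well-defined, and they are mutually inverse because (\ref{equ:corres}) itself is bijective on each $R_h$.

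The only point that requires a little care is the unit. On the action side, any $k$-algebra automorphism fixes $1_R$, so the trivial character of $\widehat{A}$ acts on $1_R$; by (\ref{equ:corres}) this forces $1_R\in \bar{R}_{(e,e)}$, confirming that $\bar{R}$ is indeed an $(H\times A)$-graded algebra (not merely a graded non-unital algebra). On the refinement side, $\chi.1_R=\chi(e)\,1_R=1_R$, so the associated $\widehat{A}$-action fixes $1_R$ automatically. This bookkeeping is the only part that is not purely formal, but it is immediate; the main step above is just the computation $\chi(a)\chi(a')=\chi(aa')$, which is exactly why characters of $A$ and the $A$-grading are interchangeable.
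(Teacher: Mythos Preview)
Your proof is correct and follows essentially the same route as the paper: apply the correspondence (\ref{equ:corres}) to each homogeneous component $R_h$, then check that the multiplicative compatibility on one side matches the algebra-automorphism condition on the other via the identity $\chi(a)\chi(a')=\chi(aa')$. The paper only writes out the ``only if'' direction explicitly and leaves the converse and the unit condition to the reader, whereas you have spelled out both directions and the unit; your argument is thus a slightly more detailed version of the same proof.
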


\begin{proof}
We apply the correspondence (\ref{equ:corres}) to the homogeneous component $R_h$ for each $h\in H$. It suffices to show that the decomposition $R=\oplus_{(h, a)\in H\times A} \bar{R}_{(h, a)}$ makes $R=\bar{R}$ an $(H\times A)$-graded algebra if and only if the corresponding $\widehat{A}$-action on $R$ is given by $H$-graded algebra automorphisms. Take the ``only if" part for example. For any two elements $r\in \bar{R}_{(h, a)}\subseteq R_{h}$ and $r\in \bar{R}_{(h', a')}\subseteq R_{h'}$, we have
$$\chi.(rr')=\chi(aa')rr'=(\chi(a)r)(\chi(a')r')=(\chi.r)(\chi.r'),$$
where the leftmost equality uses the $(H\times A)$-gradation on $R=\bar{R}$.  This proves that $\chi$ acts on $R$ by an $H$-graded algebra automorphism.
\end{proof}

Let $\sigma\colon R\rightarrow R$ be an automorphism as an $H$-graded algebra. For each $X\in \mbox{Mod}^H\mbox{-}R$, the \emph{twisted module} $X^\sigma$ is defined such that $X^\sigma=X$  as an $H$-graded space  and that it has a  new $R$-action ``$_\circ$" given by $x_\circ r=x.\sigma(r)$. This gives rise to an automorphism $(-)^\sigma\colon  \mbox{Mod}^H\mbox{-}R\rightarrow \mbox{Mod}^H\mbox{-}R$ . In this way, each subgroup $G'$ of the automorphism group of $R$ as an $H$-graded algebra has a strict action on $\mbox{Mod}^H\mbox{-}R$ by assigning to each $g\in G'$ the automorphism $F_g=(-)^{g^{-1}}$. Such an action is referred as a \emph{twisting action} of $G'$ on $\mbox{Mod}^H\mbox{-}R$.

Let $A$ be a finite abelian group. Consider an $A$-refinement $\bar{R}$ of $R$, and the corresponding $\widehat{A}$-action on $R$. Then there is a strict $\widehat{A}$-action on $\mbox{Mod}^H\mbox{-}R$ by assigning to each $\chi\in \widehat{A}$ the automorphism $F_\chi=(-)^{\chi^{-1}}$; here, we identify $\chi$ as an element in the automorphism group of $R$. We refer to this action as the \emph{twisting action} of $\widehat{A}$ on  $\mbox{Mod}^H\mbox{-}R$ that \emph{corresponds to the $A$-refinement $\bar{R}$}. We will consider the category $(\mbox{\rm Mod}^H\mbox{-}R)^{\widehat{A}}$ of $\widehat{A}$-equivariant objects in $\mbox{Mod}^H\mbox{-}R$.

By Proposition \ref{prop:app2} there is an equivalence of categories
\begin{align}\label{equ:inverseequi}
\Theta\colon  {\rm Mod}^H\mbox{-}R\stackrel{\sim}\longrightarrow ({\rm Mod}^{(H\times A)}\mbox{-}\bar{R})^A
\end{align}
where we consider the degree-shift action of $A$ on ${\rm Mod}^{(H\times A)}\mbox{-}\bar{R}$. The following is a different  equivalence, which is somehow adjoint to (\ref{equ:inverseequi}); see Remark \ref{rem:Len1}.

Recall that the canonical projection $\pi\colon H\times A\rightarrow H$  satisfies $\pi_*(\bar{R})=R$. We define the following functor
$$\Gamma \colon \mbox{\rm Mod}^{(H\times A)}\mbox{-}\bar{R} \longrightarrow (\mbox{\rm Mod}^H\mbox{-}R)^{\widehat{A}}$$
which sends an $(H\times A)$-graded $\bar{R}$-module $\bar{X}$ to $\Gamma(\bar{X})=(\pi_*(\bar{X}), \gamma)$, where each isomorphism $\gamma_\chi\colon \pi_*(\bar{X})\rightarrow F_\chi(\pi_*(\bar{X}))=\pi_*(\bar{X})^{\chi^{-1}}$ is given by $\gamma_\chi(x)=\chi^{-1}(a)x$ for $x\in \bar{X}_{(h, a)}\subseteq \pi_*(\bar{X})_h$. The action of $\Gamma$ on morphisms is given by $\pi_*$.

The following result generalizes slightly a result in \cite{Len}.

\begin{prop}\label{prop:recover}
Let $R$ be an $H$-graded algebra over $k$, and let $A$ be a finite abelian group such that $k$ is a splitting field of $A$. Consider an $A$-refinement $\bar{R}$ of $R$ and the corresponding twisting action of $\widehat{A}$ on  $\mbox{\rm Mod}^H\mbox{-}R$. Then the above functor $\Gamma$ is an isomorphism of categories. Moreover, we have a restricted isomorphism
$$ \Gamma\colon \mbox{\rm mod}^{(H\times A)}\mbox{-}\bar{R} \stackrel{\sim}\longrightarrow (\mbox{\rm mod}^H\mbox{-}R)^{\widehat{A}}.$$
\end{prop}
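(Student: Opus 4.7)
The plan is to construct an explicit inverse functor $\Delta\colon (\mathrm{Mod}^H\text{-}R)^{\widehat{A}}\to \mathrm{Mod}^{(H\times A)}\text{-}\bar{R}$ and verify that both $\Gamma\circ\Delta$ and $\Delta\circ\Gamma$ are the identity on the nose. Since both functors will act as the identity on morphisms (only the ``amount'' of grading/equivariance data attached to the underlying $k$-space changes), this delivers the claimed isomorphism of categories.

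First I would build $\Delta$ as follows. Given $(X,\alpha)\in (\mathrm{Mod}^H\text{-}R)^{\widehat{A}}$, the equivariance relation $\alpha_{\chi\chi'}=F_\chi(\alpha_{\chi'})\circ\alpha_\chi$ collapses, because the $\widehat{A}$-action on $\mathrm{Mod}^H\text{-}R$ is strict and $\widehat{A}$ is abelian, to the identity $\alpha_{\chi\chi'}=\alpha_{\chi'}\circ\alpha_\chi$ of underlying $k$-linear maps. Restricting to the homogeneous piece $X_h$ (and using that $(X^{\chi^{-1}})_h=X_h$ as $k$-spaces) therefore yields a linear $\widehat{A}$-action on $X_h$. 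Applying the correspondence (\ref{equ:corres}) to each $X_h$, I obtain an $A$-gradation $X_h=\bigoplus_{a\in A}\bar{X}_{(h,a)}$; set $\Delta(X,\alpha)=\bigoplus_{(h,a)\in H\times A}\bar{X}_{(h,a)}$ as an $(H\times A)$-graded $k$-space, and let $\Delta$ act by the identity on morphisms. The verification that a morphism of $\widehat{A}$-equivariant objects preserves the refined gradation reduces to the fact that an $\widehat{A}$-equivariant linear map respects the eigenspace decomposition guaranteed by (\ref{equ:corres}).

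The key technical step is showing that $\Delta(X,\alpha)$ is an $\bar{R}$-module, i.e.\ that $\bar{X}_{(h,a)}\cdot \bar{R}_{(h',a')}\subseteq \bar{X}_{(hh',aa')}$. Since $\alpha_\chi\colon X\to X^{\chi^{-1}}$ is an $R$-homomorphism, one has $\alpha_\chi(x\cdot r)=\alpha_\chi(x)\cdot \chi^{-1}(r)$. Combining this with the explicit description $\alpha_\chi(x)=\chi^{-1}(a)\,x$ for $x\in \bar{X}_{(h,a)}$ (which is exactly how we extracted the $A$-gradation from the $\widehat{A}$-action via (\ref{equ:corres})) and with Lemma \ref{lem:corres} applied to $r\in\bar{R}_{(h',a')}$, giving $\chi^{-1}(r)=\chi^{-1}(a')\,r$, yields
\[\alpha_\chi(x\cdot r)=\chi^{-1}(a)\chi^{-1}(a')\,(x\cdot r)=\chi^{-1}(aa')\,(x\cdot r),\]
which by (\ref{equ:corres}) places $x\cdot r$ in the $(aa')$-component, as required. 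This compatibility \emph{is} the heart of the argument and is where I expect the bookkeeping to be most delicate. Once $\Delta$ is defined, $\Gamma\circ\Delta=\mathrm{Id}$ and $\Delta\circ\Gamma=\mathrm{Id}$ are tautological: both compositions reassemble the same data through the two sides of (\ref{equ:corres}), and both leave underlying $k$-spaces and underlying morphisms untouched.

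Finally, for the restricted isomorphism on finitely presented modules, I would invoke Lemma \ref{lem:fp}(1) with $G=H\times A$, $H$ in place of $H$, $N=A$ and $\pi$ the canonical projection, which is surjective with finite kernel $A$: an $(H\times A)$-graded $\bar R$-module $\bar X$ is finitely presented if and only if $\pi_*(\bar X)$ is. Since the underlying $H$-graded $R$-module of $\Gamma(\bar X)$ is precisely $\pi_*(\bar X)$, and since finite presentation of an equivariant object is tested on its underlying object, the isomorphism $\Gamma$ restricts to an isomorphism on the full subcategories of finitely presented objects.
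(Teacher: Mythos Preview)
Your proposal is correct and follows essentially the same approach as the paper: construct the inverse of $\Gamma$ by using the correspondence (\ref{equ:corres}) to convert the linear $\widehat{A}$-action given by the $\alpha_\chi$ on each $X_h$ into an $A$-gradation, check that the result is an $(H\times A)$-graded $\bar R$-module, and invoke Lemma~\ref{lem:fp}(1) for the finitely presented case. You spell out the verification of the $\bar R$-module structure in more detail than the paper does, but the argument is the same.
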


\begin{proof}
Let us indicate the inverse of $\Gamma$. For an $\widehat{A}$-equivariant object $(X, \alpha)$ in $\mbox{\rm Mod}^H\mbox{-}R$, we have for each $\chi\in \widehat{A}$ the isomorphism $\alpha_\chi\colon X\rightarrow F_\chi(X)=X^{\chi^{-1}}$ in $\mbox{\rm Mod}^H\mbox{-}R$; in particular, $\alpha_\chi$ induces a $k$-linear automorphism on each homogeneous component $X_h$ for each $h\in H$. In  view of (\ref{equ:rel}), we observe that these isomorphisms $(\alpha_\chi)^{-1}=\alpha_{\chi^{-1}}$ induce a linear $\widehat{A}$-action on $X_h$. By the correspondence (\ref{equ:corres}) each homogeneous component $X_h$ has an $A$-gradation $X_h=\oplus_{a\in A}\bar{X}_{(h, a)}$ such  that $\alpha_{\chi^{-1}}(x)=\chi(a)x$ for $x\in \bar{X}_{(h, a)}$. This makes $\bar{X}=X=\oplus_{(h, a)\in H\times A} \bar{X}_{(h, a)}$ an $(H\times A)$-graded $\bar{R}$-module; moreover, we have $\alpha_\chi=\gamma_\chi$ for each $\chi\in \widehat{A}$. In other words, $(X, \alpha)=\Gamma(\bar{X})$. Then the assignment $(X, \alpha)\mapsto \bar{X}$ defines the inverse functor of $\Gamma$.

The restricted isomorphism follows from Lemma \ref{lem:fp}(1).
\end{proof}

\begin{rem}\label{rem:Len1}
We mention that one might infer the equivalence (\ref{equ:inverseequi}) from Proposition \ref{prop:recover} and a general result \cite[Theorem 7.2]{El2014}. Here, we identify $A$ with the character group of $\widehat{A}$. In other words,  these two equivalences are somehow \emph{adjoint} to each other. We thank Helmut Lenzing for this remark.
\end{rem}

\section{Weighted projective lines and quotient abelian categories}

In this section, we recall from \cite{GL87, GL90} some basic facts on weighted projective lines. For a weighted projective line, we study the relationship between  the restriction subalgebras of its homogeneous coordinate algebra and certain quotient categories of the category of coherent sheaves on it. Here, we work on an arbitrary field $k$.

\subsection{}

Let $t\geq 1$ be an integer. A \emph{weight sequence} $\mathbf{p}=(p_1, p_2, \cdots, p_t)$  with $t$ weights consists of $t$ positive integers satisfying $p_i\geq 2$. We will assume that $p_1\geq p_2\geq \cdots \geq p_t$.

The \emph{string group} $L(\mathbf{p})$ associated to a weight sequence $\mathbf{p}$ is an abelian group with generators $\vec{x}_1, \vec{x}_2, \cdots, \vec{x}_t$ subject to the relations $p_1\vec{x}_1=p_2\vec{x}_2=\cdots =p_t\vec{x}_t$. This common element is denoted by $\vec{c}$, called the \emph{canonical element}. The abelian group $L(\mathbf{p})$ is of rank one, where the canonical element $\vec{c}$ has infinite order. We observe an isomorphism $L(\mathbf{p})/{\mathbb{Z}\vec{c}}\simeq \prod_{i=1}^t \mathbb{Z}/p_i\mathbb{Z}$ of abelian groups. From these facts, we infer that  each element $\vec{x}$ in $L(\mathbf{p})$ is uniquely expressed in its \emph{normal form}
\begin{align}
\vec{x}=l\vec{c}+\sum_{i=1}^t l_i \vec{x}_i
\end{align}
with $l\in \mathbb{Z}$ and $0\leq l_i< p_i$. Set $\phi(\vec{x})=l$; this gives rise to a map $\phi\colon L(\mathbf{p})\rightarrow \mathbb{Z}$. For each $1\leq i\leq t$, we define a homomorphism $\pi_i\colon L(\mathbf{p})\rightarrow \mathbb{Z}/{p_i\mathbb{Z}}$ such that $\pi_i (\vec{x}_j)=\delta_{i, j}\bar{1}$; it is surjective.

We denote by $\mathbb{N}=\{0, 1, 2\cdots\}$ the set of natural numbers.  Define the following map
\begin{align*}{\rm mult}\colon L(\mathbf{p})\longrightarrow \mathbb{N}, \quad \vec{x}\mapsto {\rm max}\{\phi(\vec{x})+1, 0\}.
 \end{align*}

We set $p={\rm lcm}(\mathbf{p})={\rm lcm}(p_1, p_2, \cdots, p_t)$ to be the least common multiple of $\mathbf{p}$. The following homomorphism of abelian groups, called the \emph{degree map}, is well defined
\begin{align*}
\delta\colon L(\mathbf{p})\longrightarrow \mathbb{Z}, \quad \vec{x}_i\mapsto \frac{p}{p_i}.\end{align*}
The degree map is surjective and its kernel coincides with the torsion subgroup of $L(\mathbf{p})$. We observe that $\delta(\vec{c})=p$.

Recall that the \emph{dualizing element} $\vec{\omega}$ in $L(\mathbf{p})$ is defined as $\vec{\omega}=(t-2)\vec{c}-\sum_{i=1}^t\vec{x}_i$; its normal form is $\vec{\omega}=-2\vec{c}+\sum_{i=1}^t(p_i-1)\vec{x}_i$. One computes that $\delta(\vec{\omega})=p((t-2)-\sum_{i=1}^t\frac{1}{p_i})$.

 The string group $L(\mathbf{p})$ is partially ordered such that $\vec{x}\leq \vec{y}$  if and only if $\vec{y}-\vec{x}\in \mathbb{N}\{\vec{x}_1, \vec{x}_2, \cdots, \vec{x}_t\}$, which is further equivalent to $\phi(\vec{y}-\vec{x})\geq 0$. Here, for a subset $S$ of an abelian group $A$, we denote by $\mathbb{N}S$ the submonoid of $A$ generated by  $S$. We denote by $L(\mathbf{p})_+$ the positive cone of $L(\mathbf{p})$, which by definition equals the submonoid $\mathbb{N}\{\vec{x}_1, \vec{x}_2, \cdots, \vec{x}_t\}$.

\subsection{} Let $k$ be an arbitrary field. A \emph{weighted projective line} $\mathbb{X}=\mathbb{X}(\mathbf{p}, \lambda)$ is given by a \emph{parameter sequence} $\lambda=(\lambda_1, \lambda_2, \cdots, \lambda_t)$, that is, a collection of pairwise distinct rational points on the projective line $\mathbb{P}^1_k$, together with a weight sequence $\mathbf{p}=(p_1, p_2, \cdots, p_t)$. We will assume that the parameter sequence $\lambda$ is \emph{normalized} such that $\lambda_1=\infty$, $\lambda_2=0$ and $\lambda_3=1$.

The \emph{homogeneous coordinate algebra} $S=S(\mathbf{p}, \lambda)$  of a weighted projective line $\mathbb{X}$ is by definition $k[X_1,  X_2, \cdots, X_t]/I$, where $I$ is the ideal generated by $X_i^{p_i}-(X_2^{p_2}-\lambda_i X_1^{p_1}), 3\leq i\leq t$. We write $x_i=X_i+I$ in $S$.

The algebra $S$ is $L(\mathbf{p})$-graded by means of ${\rm deg}\; x_i=\vec{x}_i$. Hence, $S=\oplus_{\vec{x}\in L(\mathbf{p})} S_{\vec{x}}$, where $S_{\vec{x}}$ is the homogeneous component of degree $\vec{x}$. We observe that $S_{\vec{x}}\neq 0$ if and only if $\vec{x}\geq \vec{0}$; in this case, if $\vec{x}=l\vec{c}+\sum_{i=1}^t l_i\vec{x}_i$ is in its normal form, then the subset $\{x_1^{ap_1}x_2^{bp_2}x_1^{l_1}x_2^{l_2}\cdots x_t^{l_t}\; |\; a+b=l, a, b\geq 0\}$ is a basis of $S_{\vec{x}}$. We infer that \begin{align}\label{equ:mult}
{\rm dim}_k \; S_{\vec{x}}={\rm mult}(\vec{x})\end{align}
 for any $\vec{x}\in L(\mathbf{p})$.

For an irreducible monic polynomial $f(X)\in k[X]$, we define its \emph{homogenization} $f^h(X, Y)=X^{d}f(Y/X)\in k[X, Y]$, where $d$ is the degree of $f$. The set of all homogeneous prime ideals of $S$ is given by ${\rm Spec}^{L(\mathbf{p})}(S)=\{(0), (x_i), (f^h(x_1^{p_1}, x_2^{p_2})), \mathfrak{m}=(x_1, x_2, \cdots, x_t)\; |\; f\neq X-\lambda_i, 1\leq i\leq t\}$; compare \cite[Proposition 1.3]{GL87}. Here, $\mathfrak{m}$ is the unique homogeneous maximal ideal of $S$.

The following observation is direct. Recall that for an $L(\mathbf{p})$-graded $S$-module $X$, we denote by ${\rm gsupp}(X)$ its grading support; see Subsection 5.1.

\begin{lem}\label{lem:gsupp}
Consider the graded $S$-modules $S/\mathfrak{p}(\vec{x})$ for $\mathfrak{p}\in {\rm Spec}^{L(\mathbf{p})}(S)$ and $\vec{x}\in L(\mathbf{p})$. Write $\vec{x}=l\vec{c}+\sum_{j=1}^t l_j\vec{x}_j$ in its normal form. Then the following statements hold.
\begin{enumerate}
\item Assume that $\mathfrak{p}=(0)$ or $(f^h(x_1^{p_1}, x_2^{p_2}))$. Then $\mbox{\rm gsupp}\; S/\mathfrak{p}(\vec{x})=L(\mathbf{p})_+-\vec{x}$. In particular,  $\mathbb{N}(m_0\vec{c})\subseteq \mbox{\rm gsupp}\; S/\mathfrak{p}(\vec{x})$ for sufficiently large $m_0>0$.
\item Assume that $\mathfrak{p}=(x_i)$. Then $\mbox{\rm gsupp}\; S/\mathfrak{p}(\vec{x})=\sum_{j\neq i}\mathbb{N}\vec{x}_j-\vec{x}$. In particular, $\mathbb{N}(m_0\vec{c})-l_i\vec{x}_i\subseteq \mbox{\rm gsupp}\; S/\mathfrak{p}(\vec{x})\subseteq \pi_i^{-1}(-\bar{l_i})$ for sufficiently large $m_0>0$.
    \item Assume that $\mathfrak{p}=\mathfrak{m}$. Then $\mbox{\rm gsupp}\; S/\mathfrak{p}(\vec{x})=\{-\vec{x}\}$.
\end{enumerate}
\end{lem}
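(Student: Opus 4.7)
The plan is to first note the general identity
\[
\mbox{gsupp}\,(S/\mathfrak{p}(\vec{x})) \;=\; \mbox{gsupp}\,(S/\mathfrak{p}) - \vec{x},
\]
which follows immediately from $(S/\mathfrak{p}(\vec{x}))_{\vec{y}} = (S/\mathfrak{p})_{\vec{x}+\vec{y}}$, and reduce all three cases to computing $\mbox{gsupp}(S/\mathfrak{p})$ for each type of homogeneous prime listed in ${\rm Spec}^{L(\mathbf{p})}(S)$. Case (3) is immediate since $\mathfrak{m}$ annihilates every positive homogeneous component, so $S/\mathfrak{m}\simeq k$ is concentrated in degree $\vec{0}$ and its shift has support $\{-\vec{x}\}$.

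For (1) and (2) the main tool is equation (\ref{equ:mult}), together with the fact that $S$ is an integral domain (standard for the homogeneous coordinate algebra of a weighted projective line). For any homogeneous non-zero-divisor $g\in S_{\vec{d}}$ we then get the short exact sequence $0\to S(-\vec{d})\xrightarrow{\cdot g} S\to S/(g)\to 0$ and hence
\[
\dim_k (S/(g))_{\vec{y}} \;=\; \mbox{mult}(\vec{y})-\mbox{mult}(\vec{y}-\vec{d}).
\]
Taking $g=f^h(x_1^{p_1},x_2^{p_2})\in S_{d\vec{c}}$ with $d=\deg f$, the right hand side is strictly positive precisely when $\vec{y}\ge \vec{0}$, so $\mbox{gsupp}(S/(f^h))=L(\mathbf{p})_+=\mbox{gsupp}(S)$ and both sub-cases of (1) follow. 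Taking $g=x_i$ with $\vec{d}=\vec{x}_i$, one writes $\vec{y}=l\vec{c}+\sum_j l_j\vec{x}_j$ in normal form and uses the identity $-\vec{x}_i=-\vec{c}+(p_i-1)\vec{x}_i$ to put $\vec{y}-\vec{x}_i$ into normal form; a short case split on whether $l_i=0$ shows the multiplicity difference equals $1$ exactly when $l\ge 0$ and $l_i=0$, and is $0$ otherwise. The resulting support $\{\vec{y}\mid l\ge 0,\; l_i=0\}$ is precisely $\sum_{j\ne i}\mathbb{N}\vec{x}_j$, yielding case (2).

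The ``in particular'' claims are then routine normal-form bookkeeping. For case (1), once $m_0\ge -\phi(\vec{x})$ one has $km_0\vec{c}+\vec{x}\in L(\mathbf{p})_+$, so $km_0\vec{c}\in L(\mathbf{p})_+-\vec{x}$. For case (2), the shift $-l_i\vec{x}_i$ is designed to cancel the $\vec{x}_i$-coefficient of $\vec{x}$, so that $km_0\vec{c}-l_i\vec{x}_i+\vec{x}=(km_0+l)\vec{c}+\sum_{j\ne i}l_j\vec{x}_j$ lands in $\sum_{j\ne i}\mathbb{N}\vec{x}_j$ once $m_0$ is large. The upper containment $\mbox{gsupp}(S/(x_i)(\vec{x}))\subseteq \pi_i^{-1}(-\bar{l_i})$ is immediate since $\pi_i$ annihilates $\sum_{j\ne i}\mathbb{N}\vec{x}_j$, so any element of $\sum_{j\ne i}\mathbb{N}\vec{x}_j-\vec{x}$ is sent by $\pi_i$ to $-\pi_i(\vec{x})=-\bar{l_i}$.

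The only step that is not essentially tautological is the dimension count in case (2): one must track the normal-form rewriting of $\vec{y}-\vec{x}_i$ carefully to confirm $\mbox{mult}(\vec{y})-\mbox{mult}(\vec{y}-\vec{x}_i)=0$ iff $l_i>0$ (or $l<0$). The other mildly non-trivial ingredient, beyond (\ref{equ:mult}), is the domain property of $S$, which is cited from \cite{GL87} rather than proved here.
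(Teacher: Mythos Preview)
Your argument is correct, but it takes a different route from the paper's. The paper observes that for each prime $\mathfrak{p}$ the quotient $S/\mathfrak{p}$ is a graded \emph{domain} generated as a $k$-algebra by exactly those $x_j$ that do not lie in $\mathfrak{p}$; since in a domain any monomial in the surviving generators is nonzero, $\mbox{gsupp}(S/\mathfrak{p})$ is immediately the submonoid of $L(\mathbf{p})$ generated by the corresponding $\vec{x}_j$. This yields $L(\mathbf{p})_+$ for $\mathfrak{p}=(0)$ or $(f^h)$ and $\sum_{j\neq i}\mathbb{N}\vec{x}_j$ for $\mathfrak{p}=(x_i)$ in one line, with the shift formula and the ``in particular'' clauses then declared evident. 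Your approach instead pushes the multiplicity formula (\ref{equ:mult}) through the short exact sequence $0\to S(-\vec{d})\to S\to S/(g)\to 0$ and reads off the support from the resulting dimension count $\mbox{mult}(\vec{y})-\mbox{mult}(\vec{y}-\vec{d})$. Both arguments use that $S$ (respectively $S/\mathfrak{p}$) is a domain; the paper's is shorter and more structural, while yours is a clean computation that avoids having to argue that no $x_j$ lies in $(f^h)$ and makes the case split in (2) completely explicit. One small point: in your verification of the ``in particular'' clause for (1), the containment $k m_0\vec{c}+\vec{x}\in L(\mathbf{p})_+$ can fail at $k=0$ when $\phi(\vec{x})<0$, so strictly speaking only the nonzero multiples of $m_0\vec{c}$ are guaranteed to lie in the support; this is harmless for the intended application (infiniteness of $\mbox{gsupp}\cap H$) and is the same looseness present in the lemma's own statement.
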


\begin{proof}
We use the facts that $\mbox{\rm gsupp}(X(\vec{x}))=\mbox{\rm gsupp}(X)-\vec{x}$, and that the quotient algebra $S/\mathfrak{p}$ is a graded domain which is generated as an algebra by $x_i$'s that are not contained in $\mathfrak{p}$. It follows that  $\mbox{\rm gsupp}(S/\mathfrak{p})=L(\mathbf{p})_+$ for $\mathfrak{p}=(0)$ or $(f^h(x_1^{p_1}, x_2^{p_2}))$, and that $\mbox{\rm gsupp}( S/(x_i))=\sum_{j\neq i} \mathbb{N}\vec{x}_j$. The remaining statements are evident.
\end{proof}

Let $H\subseteq L(\mathbf{p})$ be an infinite subgroup. Consider the \emph{restriction subalgebra} $S_H=\oplus_{\vec{x}\in H}S_{\vec{x}}$ of $S$, which will be viewed as an $H$-graded algebra. For example, $S_{\mathbb{Z}\vec{c}}$ is isomorphic to the polynomial algebra $k[X, Y]$ by identifying $X$ with $x_1^{p_1}$ and $Y$ with $x_2^{p_2}$. More generally, for $m\geq 1$ the restriction subalgebra $S_{\mathbb{Z}(m\vec{c})}$ is isomorphic to the subalgebra of $k[X, Y]$ generated by monomials of degree $m$.

\begin{lem}
Let $H\subseteq L(\mathbf{p})$ be an infinite subgroup. Then the restriction subalgebra $S_H$ is a finitely generated $k$-algebra, and as an $S_H$-module, $S$ is finitely generated.
\end{lem}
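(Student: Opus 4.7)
The plan is to first exploit the fact that $L(\mathbf{p})$ has rank one to show that any infinite subgroup $H$ has finite index, and then to reduce the statements to standard commutative algebra (essentially the Artin--Tate lemma).

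First, I would argue that $[L(\mathbf{p}) : H]$ is finite. Since $L(\mathbf{p})$ is finitely generated abelian of rank one, the quotient by its torsion subgroup $T$ is $\mathbb{Z}\vec{c}$. The image of $H$ in $L(\mathbf{p})/T \simeq \mathbb{Z}$ is non-trivial because $H$ is infinite and $T$ is finite; hence it is $m\mathbb{Z}$ for some $m\geq 1$, and $m\vec{c}$ lies in $H+T$. From $|T|<\infty$ it follows that both $H+T$ and then $H$ itself have finite index in $L(\mathbf{p})$.

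Next, from $[L(\mathbf{p}):H]<\infty$, there is an integer $N\geq 1$ with $N\vec{x}_i \in H$ for each $1\leq i\leq t$. Consequently $x_i^N \in S_H$ for each $i$. Consider the subalgebra $R = k[x_1^N, x_2^N, \ldots, x_t^N]\subseteq S_H$. Since $S$ is generated as a $k$-algebra by $x_1, \ldots, x_t$, the finitely many monomials $x_1^{a_1}x_2^{a_2}\cdots x_t^{a_t}$ with $0\leq a_i < N$ generate $S$ as an $R$-module. In particular $S$ is a finitely generated $R$-module, hence \emph{a fortiori} a finitely generated $S_H$-module. This gives the second assertion.

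Finally, to conclude that $S_H$ is a finitely generated $k$-algebra, I would invoke the Artin--Tate lemma: since $k$ is Noetherian, $S$ is finitely generated as a $k$-algebra, and $S$ is finitely generated as an $S_H$-module, the intermediate ring $S_H$ is itself a finitely generated $k$-algebra. No step here presents a serious obstacle; the only point requiring care is the finiteness of the index $[L(\mathbf{p}):H]$, which is the structural input that makes the rest of the argument routine.
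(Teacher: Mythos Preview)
Your argument is correct in substance, with one small slip: the quotient $L(\mathbf{p})/T$ is infinite cyclic, but it is \emph{not} generated by the image of $\vec{c}$ (the degree map $\delta$ identifies $L(\mathbf{p})/T$ with $\mathbb{Z}$ and sends $\vec{c}$ to $p=\mathrm{lcm}(\mathbf{p})$, not to $1$). This does not affect your conclusion that $[L(\mathbf{p}):H]<\infty$, which only needs that the image of $H$ in $L(\mathbf{p})/T\simeq\mathbb{Z}$ is nonzero; the rest of your proof then goes through unchanged.

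The paper follows the same sandwich strategy---find a noetherian subalgebra $A\subseteq S_H$ over which $S$ is module-finite, then use noetherianity---but chooses $A$ differently. Rather than establishing the finite-index fact and taking $A=k[x_1^N,\dots,x_t^N]$, the paper observes directly that $H\cap\mathbb{Z}\vec{c}=\mathbb{Z}(m\vec{c})$ for some $m\geq 1$ (immediate since $L(\mathbf{p})/\mathbb{Z}\vec{c}$ is finite and $H$ is infinite) and takes $A=S_{\mathbb{Z}(m\vec{c})}$, a Veronese subring of the polynomial ring $S_{\mathbb{Z}\vec{c}}=k[x_1^{p_1},x_2^{p_2}]$; the finiteness of $S$ over $S_{\mathbb{Z}\vec{c}}$ is quoted from \cite[Proposition 1.3]{GL87}. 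Your route is more self-contained (no external citation, finiteness over $k[x_1^N,\dots,x_t^N]$ is visible from the monomial generators) and makes the Artin--Tate mechanism explicit; the paper's route is shorter once the cited fact is available and sidesteps computing the index of $H$.
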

\begin{proof}
Assume that $H\cap \mathbb{Z}\vec{c}=\mathbb{Z}(m\vec{c})$ for some $m\geq 1$.  Recall from \cite[Proposition 1.3]{GL87} that $S$ is a finitely generated $S_{\mathbb{Z}\vec{c}}$-module, and thus a finitely generated $S_{\mathbb{Z}(m\vec{c})}$-module. Since the algebra $S_{\mathbb{Z}(m\vec{c})}$ is noetherian, the algebra $S_H$, viewed as an $S_{\mathbb{Z}(m\vec{c})}$-submodule of $S$, is finitely generated. Then the statements follow immediately.
\end{proof}

Recall that ${\rm mod}^{L(\mathbf{p})}\mbox{-}S$ denotes the abelian category of finitely presented $L(\mathbf{p})$-graded $S$-modules. Consider the \emph{restriction functor} $${\rm res}\colon {\rm mod}^{L(\mathbf{p})}\mbox{-}S\rightarrow {\rm mod}^H\mbox{-}S_H,$$ which sends an $L(\mathbf{p})$-graded $S$-module $X$ to $X_H=\oplus_{h\in H} X_h$, which is naturally an $H$-graded $S_H$-module. The functor ``res" is well defined by the following fact.

\begin{lem}
Let $H\subseteq L(\mathbf{p})$ be an infinite subgroup, and let $X$ be a finitely generated $L(\mathbf{p})$-graded $S$-module. Then the  $H$-graded $S_H$-module $X_H$ is finitely generated.
\end{lem}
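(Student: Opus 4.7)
The plan is to reduce the statement to the case $X = S(\vec{y})$ by means of a graded presentation of $X$, and then to deduce this case from the previous lemma by means of the $H$-coset decomposition of $S$.

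First, I would pick homogeneous generators $x_1,\ldots,x_n$ of $X$ with $\vec{u}_i := \deg x_i$, yielding a surjection
\[
F \;:=\; \bigoplus_{i=1}^{n} S(-\vec{u}_i)\;\twoheadrightarrow\;X
\]
in $\mathrm{mod}^{L(\mathbf{p})}\text{-}S$. The restriction functor $\mathrm{res}$ is exact, because on underlying graded abelian groups it is simply a direct summand (extracting the components indexed by $H$); hence applying it gives a surjection $F_H \twoheadrightarrow X_H$ of $H$-graded $S_H$-modules. This reduces the problem to the case $X = S(\vec{y})$, i.e.\ to showing that $(S(\vec{y}))_H$ is finitely generated over $S_H$ for every $\vec{y}\in L(\mathbf{p})$.

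Next, I would identify $(S(\vec{y}))_H$, as an $S_H$-submodule of $S$, with the coset summand
\[
S^{\vec{y}+H}\;:=\;\bigoplus_{\vec{u}\in \vec{y}+H}S_{\vec{u}},
\]
which is immediate from $S(\vec{y})_{\vec{x}} = S_{\vec{y}+\vec{x}}$. Since multiplication by $S_H$ preserves $H$-cosets of degrees, the coarsened decomposition $S=\bigoplus_{\bar v\in L(\mathbf{p})/H}S^{\bar v}$ is a decomposition of $S$ as an $S_H$-module. By the previous lemma, $S$ is finitely generated as an $S_H$-module; projecting a finite generating set onto any direct summand shows that each $S^{\bar v}$ — and in particular $(S(\vec{y}))_H=S^{\vec{y}+H}$ — is finitely generated over $S_H$. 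Combining with the first step gives the desired finite generation of $X_H$.

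I do not expect any real obstacle; the whole argument is essentially bookkeeping with the $L(\mathbf{p})$-grading. The only point that needs a moment of unpacking is the identification of $(S(\vec{y}))_H$ with the coset summand $S^{\vec{y}+H}$, after which the conclusion is formal.
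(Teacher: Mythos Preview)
Your proof is correct, but it takes a different route from the paper's. The paper argues via the noetherian subalgebra $S_{\mathbb{Z}(m\vec{c})}\subseteq S_H$ (where $\mathbb{Z}(m\vec{c})=H\cap\mathbb{Z}\vec{c}$): since $X$ is finitely generated over $S$ and $S$ is finitely generated over $S_{\mathbb{Z}(m\vec{c})}$, the module $X$ is finitely generated over $S_{\mathbb{Z}(m\vec{c})}$; then $X_H$, being a submodule, is finitely generated over this noetherian ring, hence over $S_H$. Your argument instead reduces to $X=S(\vec{y})$ via a free presentation and then uses the $H$-coset decomposition $S=\bigoplus_{\bar v}S^{\bar v}$ together with the previous lemma's conclusion that $S$ is finitely generated over $S_H$, projecting generators onto the relevant summand. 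Your approach is slightly longer but avoids invoking noetherianity and uses the previous lemma's output more directly; the paper's is terser but essentially reproves part of that lemma's intermediate step. Both are entirely sound.
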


\begin{proof}
Assume that $H\cap \mathbb{Z}\vec{c}=\mathbb{Z}(m\vec{c})$ for some $m\geq 1$. We observe that $X$, viewed as an $S_{\mathbb{Z}(m\vec{c})}$-module, is finitely generated. Thus as its submodule, $X_H$ is a finitely generated $S_{\mathbb{Z}(m\vec{c})}$-module. In particular, $X_H$ is a finitely generated $S_H$-module.
\end{proof}

The following fact is well known.

\begin{lem}\label{lem:res}
Let $H\subseteq L(\mathbf{p})$ be an infinite subgroup. Then the restriction functor  ${\rm res}\colon {\rm mod}^{L(\mathbf{p})}\mbox{-}S\rightarrow {\rm mod}^H\mbox{-}S_H$ is a quotient functor.
\end{lem}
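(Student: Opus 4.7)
The plan is to exhibit a fully faithful left adjoint to $\mathrm{res}$ and then to invoke the general principle that any exact functor between abelian categories admitting a fully faithful left adjoint is a quotient functor.

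First, $\mathrm{res}$ is clearly exact: a sequence in ${\rm mod}^{L(\mathbf{p})}\mbox{-}S$ is exact if and only if it is exact in each homogeneous degree, so passing to the subsum $\bigoplus_{h\in H} X_h$ preserves exactness. Next, define
\[
L\colon {\rm mod}^H\mbox{-}S_H\longrightarrow {\rm mod}^{L(\mathbf{p})}\mbox{-}S,\qquad L(Y)=S\otimes_{S_H} Y,
\]
where the $L(\mathbf{p})$-grading on $L(Y)$ is induced by $|s\otimes y|=|s|+|y|$ for homogeneous tensors. Since $L(S_H(h))\simeq S(h)$ for every $h\in H$, any finite presentation $\bigoplus_i S_H(h_i)\to\bigoplus_j S_H(h'_j)\to Y\to 0$ yields a finite presentation of $L(Y)$; here one uses the preceding lemma, which ensures that $S$ (and hence each $S(h)$) is a finitely generated object in the target category.

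The standard tensor--hom adjunction adapts to graded modules and produces a natural isomorphism
\[
\mathrm{Hom}^{L(\mathbf{p})}_S(S\otimes_{S_H} Y, X)\stackrel{\sim}\longrightarrow \mathrm{Hom}^H_{S_H}(Y, \mathrm{res}(X)),
\]
i.e.\ $L\dashv\mathrm{res}$. I would then compute the unit $\eta_Y\colon Y\to\mathrm{res}(L(Y))$: in degree $h\in H$ an element of $(L(Y))_h$ is a sum of terms $s\otimes y$ with $|s|+|y|=h$; since $|y|\in H$ forces $|s|=h-|y|\in H$, we have $s\in S_H$, and the relation $s\otimes y=1\otimes sy$ identifies $(L(Y))_h$ with $1\otimes Y_h\simeq Y_h$. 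Hence $\eta_Y$ is an isomorphism, so $L$ is fully faithful.

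Finally I would invoke the general fact: if $R\colon\mathcal{A}\to\mathcal{B}$ is an exact functor of abelian categories with a fully faithful left adjoint $L$, then $\mathrm{Ker}(R)$ is a Serre subcategory and $R$ induces an equivalence $\mathcal{A}/\mathrm{Ker}(R)\stackrel{\sim}\to\mathcal{B}$. Essential surjectivity is immediate from $\mathrm{res}\circ L\simeq\mathrm{Id}$. For fullness, any $f\colon\mathrm{res}(X)\to\mathrm{res}(X')$ corresponds under adjunction to $\tilde f\colon L\mathrm{res}(X)\to X'$, and the triangle identity $\mathrm{res}(\epsilon_X)\circ\eta_{\mathrm{res}(X)}=\mathrm{Id}$ shows $\mathrm{res}(\epsilon_X)$ is an isomorphism, so the kernel and cokernel of the counit $\epsilon_X\colon L\mathrm{res}(X)\to X$ lie in $\mathrm{Ker}(\mathrm{res})$; the roof $X\xleftarrow{\epsilon_X}L\mathrm{res}(X)\xrightarrow{\tilde f}X'$ then represents a preimage of $f$. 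Faithfulness follows because $\mathrm{res}(t)=0$ implies $\mathrm{im}(t)\in\mathrm{Ker}(\mathrm{res})$ by exactness of $\mathrm{res}$, whence any roof killed by $\bar{\mathrm{res}}$ is zero in the quotient.

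The only step requiring a bit of care is the verification that $L$ lands in ${\rm mod}^{L(\mathbf{p})}\mbox{-}S$, which relies squarely on the finite generation of $S$ over $S_H$ from the preceding lemma; all remaining arguments are formal.
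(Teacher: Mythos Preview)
Your argument is correct and follows the paper's approach: construct the left adjoint $L=S\otimes_{S_H}-$, show the unit $Y\to\mathrm{res}(L(Y))$ is an isomorphism (equivalently, $L$ is fully faithful), and conclude that $\mathrm{res}$ is a quotient functor; the paper simply invokes a result of Gabriel--Zisman for this last step where you spell out the localization argument by hand. One minor remark: the well-definedness of $L$ on finitely presented modules needs only that $L(S_H(h))\simeq S(h)$ is free of rank one over $S$, together with right exactness of the tensor product---the finite generation of $S$ over $S_H$ from the earlier lemma is not actually used here.
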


\begin{proof}
Indeed, the functor ``res" admits a left adjoint $S\otimes_{S_H}-\colon {\rm mod}^H\mbox{-}S_H\rightarrow {\rm mod}^{L(\mathbf{p})}\mbox{-}S$. Moreover, the counit ${\rm res}\circ S\otimes_{S_H}-\longrightarrow {\rm Id}_{{\rm mod}^H\mbox{-}S_H}$ is an isomorphism. It follows from \cite[1.3 Proposition (iii)]{GZ} that the restriction functor is a quotient functor.
\end{proof}

\subsection{}

Let $S=S(\mathbf{p}, \lambda)$ be the homogenous coordinate algebra of a weighted projective line $\mathbb{X}=\mathbb{X}(\mathbf{p}, \lambda)$. Set $L=L(\mathbf{p})$. Denote by ${\rm mod}_0^{L}\mbox{-}S$ the full subcategory of ${\rm mod}^{L}\mbox{-}S$ consisting of finite dimensional modules; it is a Serre subcategory. The corresponding quotient category is denoted by ${\rm qmod}^L\mbox{-}S={\rm mod}^{L}\mbox{-}S/{{\rm mod}_0^{L}\mbox{-}S}$, and the quotient functor is denoted by $q\colon {\rm mod}^{L}\mbox{-}S\rightarrow {\rm qmod}^L\mbox{-}S$.

For each $\vec{x}\in L$, the degree-shift functor $(\vec{x})$ on ${\rm mod}^{L}\mbox{-}S$ induces an automorphism on the quotient category ${\rm qmod}^L\mbox{-}S$, which is also denoted by $(\vec{x})$. Consequently, each subgroup of $L$ has a strict action on ${\rm qmod}^L\mbox{-}S$. Set $\mathcal{S}_i=q(S/(x_i))$ for $1\leq i\leq t$.

We mention that the quotient category ${\rm qmod}^L\mbox{-}S$ is equivalent to the category ${\rm coh}\mbox{-}\mathbb{X}$ of coherent sheaves on $\mathbb{X}$, and the object $\mathcal{S}_i$ corresponds to a simple sheaf concentrated at the homogeneous prime ideal $(x_i)$; see \cite[Subsections 1.8 and 1.7]{GL87}.

Let $H\subseteq L$ be an infinite subgroup. Consider the $H$-graded restriction subalgebra $S_H$. We have the quotient category ${\rm qmod}^H\mbox{-}S_H={\rm mod}^H\mbox{-}S_H/{{\rm mod}_0^H\mbox{-}S_H}$. The restriction functor ${\rm res}\colon {\rm mod}^{L}\mbox{-}S\rightarrow {\rm mod}^{H}\mbox{-}S_H$ induces the corresponding functor betwen the quotient categories  $${\rm res}\colon {\rm qmod}^L\mbox{-}S\rightarrow {\rm qmod}^H\mbox{-}S_H,$$ which will also be referred as the \emph{restriction functor}.

We will consider the following Serre subcategory of ${\rm qmod}^L\mbox{-}S$
$$\mathcal{N}_H=\langle \mathcal{S}_i(l\vec{x}_i)\; |\; 1 \leq i\leq t, 0\leq l\leq p_i-1 \mbox{ with }  -\bar{l}\notin   \pi_i(H)\rangle.$$
That is, $\mathcal{N}_H$ is the Serre subcategory generated by these simple sheaves $ \mathcal{S}_i(l\vec{x}_i)$.

Recall that  for each $1\leq i\leq t$, the surjective homomorphism $\pi_i\colon L\rightarrow \mathbb{Z}/{p_i\mathbb{Z}}$ is defined by  $\pi_i(\vec{x}_j)=\delta_{i, j}\bar{1}$. The following concept is related to the general notion of a wide subcategory of the category formed by all the line bundles on a weighted projective line in \cite{KLM}.

 \begin{defn}\label{defn:1}
 Let $\mathbf{p}$ be a weight sequence and $L(\mathbf{p})$ be the string group.  An infinite subgroup $H\subseteq L(\mathbf{p})$ is said to be \emph{effective} provided that for each $1\leq i\leq t$,  $\pi_i(H)=\mathbb{Z}/{p_i\mathbb{Z}}$, or equivalently, $\bar{1}$ lies in $\pi_i(H)$.
 \end{defn}

 For example, any infinite subgroup of $L(\mathbf{p})$ containing the dualizing element $\vec{\omega}$ is effective, while the subgroup $\mathbb{Z}\vec{c}$ generated by $\vec{c}$ is not effective for $t\geq 1$.

The following result justifies the terminology ``effective subgroup". We mention that in spirit it is close to \cite[Proposition 8.5]{GL90}; compare the argument in \cite[Example 5.8]{GL87}. The equivalence part of the following proposition is contained in a more general result in \cite{KLM}.

\begin{prop}\label{prop:eff}
Let $L=L(\mathbf{p})$. Keep the notation as above. Then the restriction functor ${\rm res}\colon {\rm qmod}^L\mbox{-}S\rightarrow {\rm qmod}^H\mbox{-} S_H$ induces an equivalence of categories
$${\rm qmod}^L\mbox{-}S/\mathcal{N}_H\stackrel{\sim}\longrightarrow {\rm qmod}^H\mbox{-}S_H. $$
In particular, the restriction functor ${\rm res}\colon {\rm qmod}^L\mbox{-}S\rightarrow {\rm qmod}^H\mbox{-}S_H$ is an equivalence if and only if the subgroup $H\subseteq L$ is effective.
\end{prop}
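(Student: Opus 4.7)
The plan is to combine Lemma \ref{lem:res} with Lemma \ref{lem:quoinduced} and then identify the kernel of the induced functor. Since ${\rm res}\colon {\rm mod}^{L}\mbox{-}S\rightarrow {\rm mod}^{H}\mbox{-}S_H$ plainly preserves finite-dimensionality, it descends to an exact functor $\overline{\rm res}\colon {\rm qmod}^L\mbox{-}S \rightarrow {\rm qmod}^H\mbox{-}S_H$, which by Lemmas \ref{lem:res} and \ref{lem:quoinduced} is itself a quotient functor. It therefore suffices to prove that its essential kernel equals $\mathcal{N}_H$: the displayed equivalence then follows formally, and the ``in particular" part follows because $\mathcal{N}_H=0$ is equivalent to $\pi_i(H)=\mathbb{Z}/p_i\mathbb{Z}$ for every $i$, i.e., $H$ being effective.

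For the inclusion $\mathcal{N}_H\subseteq \ker\overline{\rm res}$, I would only check the generators: for $\mathcal{S}_i(l\vec{x}_i)=q(S/(x_i)(l\vec{x}_i))$ with $-\bar{l}\notin \pi_i(H)$, Lemma \ref{lem:gsupp}(2) gives ${\rm gsupp}\; S/(x_i)(l\vec{x}_i)\subseteq \pi_i^{-1}(-\bar{l})$, which is disjoint from $H$, so the $H$-restriction already vanishes at the module level.

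For the reverse inclusion, suppose $\overline{\rm res}(q(X))=0$, i.e.\ $X_H$ is finite-dimensional, and fix $m\geq 1$ with $H\cap\mathbb{Z}\vec{c}=\mathbb{Z}(m\vec{c})$. I would first show that $q(X)$ has finite length in ${\rm qmod}^L\mbox{-}S\simeq {\rm coh}\mbox{-}\mathbb{X}$. By exactness, $\overline{\rm res}$ also annihilates the torsion-free quotient $\bar{X}=q(X)/t(q(X))$; if $\bar{X}$ were non-zero, being a vector bundle on $\mathbb{X}$ it would admit a line-bundle subobject $q(S(\vec{x}))\hookrightarrow \bar X$, so that $\overline{\rm res}(q(S(\vec{x})))=0$; but by (\ref{equ:mult}) we have $\dim_k S_{\vec{x}+nm\vec{c}}={\rm mult}(\vec{x}+nm\vec{c})\to\infty$, making $S(\vec{x})_H$ infinite-dimensional, a contradiction. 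Hence $q(X)$ has finite length, and I would verify that each simple composition factor lies in $\mathcal{N}_H$. An ordinary simple $q(S/(f^h))$ is ruled out by Lemma \ref{lem:gsupp}(1), whose grading support contains $\mathbb{N}(m_0\vec{c})$ and thus meets $H$ infinitely often; an exceptional simple $\mathcal{S}_i(l\vec{x}_i)$ with $-\bar{l}\in \pi_i(H)$ is ruled out by combining the inclusion $\mathbb{N}(m_0\vec{c})-l\vec{x}_i\subseteq {\rm gsupp}\; S/(x_i)(l\vec{x}_i)$ from Lemma \ref{lem:gsupp}(2) with a choice of $h_0\in H$ satisfying $\pi_i(h_0)=-\bar{l}$: expanding $h_0+l\vec{x}_i$ in normal form and adding a large multiple of $m\vec{c}$ places $h_0+nm\vec{c}$ inside $\sum_{j\neq i}\mathbb{N}\vec{x}_j-l\vec{x}_i$ while keeping it in $H$, so infinitely many grading degrees contribute.

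The main obstacle is the finite-length step, which invokes the structural fact (from \cite{GL87}) that a non-zero torsion-free coherent sheaf on a weighted projective line is a vector bundle and admits a line-bundle subsheaf. Once this is in hand, the rest is a direct check of grading supports via Lemma \ref{lem:gsupp} against cosets of $H$ in $L(\mathbf{p})$.
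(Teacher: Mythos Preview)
Your argument is correct and follows the same overall strategy as the paper: establish that the induced restriction functor on $\mathrm{qmod}$ is a quotient functor via Lemmas~\ref{lem:res} and~\ref{lem:quoinduced}, then identify its kernel using the grading-support computations of Lemma~\ref{lem:gsupp}. The difference lies in how the kernel is computed. The paper works upstairs in ${\rm mod}^L\mbox{-}S$: it characterizes ${\rm Ker}\,F$ for the composite $F\colon {\rm mod}^L\mbox{-}S\to {\rm qmod}^H\mbox{-}S_H$ as those $X$ with ${\rm gsupp}(X)\cap H$ finite, invokes the prime filtration of a finitely generated graded module by factors $S/\mathfrak{p}(\vec{x})$, checks via Lemma~\ref{lem:gsupp} exactly which of these factors land in ${\rm Ker}\,F$, and then passes to $q({\rm Ker}\,F)=\mathcal{N}_H$. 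You instead work directly in ${\rm qmod}^L\mbox{-}S\simeq{\rm coh}\mbox{-}\mathbb{X}$, using the torsion/torsion-free decomposition and the fact that a nonzero vector bundle on $\mathbb{X}$ admits a line-bundle subobject to force finite length, and then analyze the simple composition factors one by one. Both routes reduce to the same support calculations; the paper's is a bit more self-contained (pure commutative algebra, no appeal to the line-bundle filtration of vector bundles from \cite{GL87}), while yours is more geometric and makes the sheaf-theoretic picture explicit.
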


\begin{proof}
The restriction functor ${\rm res}\colon {\rm mod}^L\mbox{-}S\rightarrow {\rm mod}^H\mbox{-}S_H$ is a quotient functor by Lemma \ref{lem:res}. Then by Lemma \ref{lem:quoinduced} the induced functor ${\rm res}\colon {\rm qmod}^L\mbox{-} S\rightarrow {\rm qmod}^H\mbox{-}S_H$ is also a quotient functor. It suffices to calculate its essential kernel. Instead, we will compute the essential kernel of the composite $F\colon {\rm mod}^L\mbox{-}S\stackrel{q}\rightarrow {\rm qmod}^L\mbox{-} S \stackrel{\rm res}\rightarrow {\rm qmod}^H\mbox{-} S_H$. Indeed, for an $L$-graded $S$-module $X$, $F(X)=0$ if and only if $X_H$ is a finite dimensional $S_H$-module, which is equivalent to the condition that $\mbox{\rm gsupp}(X)\cap H$ is a finite set.

We observe that $\mathbb{Z}(m\vec{c})\subseteq H$ for a sufficiently large $m>0$. Let $\vec{x}=l\vec{c}+\sum_{i=1}^tl_i\vec{x}_i$ an arbitrary element in $L$ that is written in its normal form.  It follows from Lemma \ref{lem:gsupp} that $\mbox{\rm gsupp}(S/\mathfrak{p}(\vec{x})) \cap H $ is finite if and only if $\mathfrak{p}=\mathfrak{m}$, or $\mathfrak{p}=(x_i)$ and $-\bar{l_i}\notin \pi_i(H)$.

Recall that a finitely generated $L$-graded $S$-module $X$ has a finite filtration with factors isomorphic to $S/\mathfrak{p}(\vec{x})$ for some $\mathfrak{p}\in {\rm Spec}^L(S)$ and $\vec{x}\in L$. Recall that ${\rm Ker}\; F$ is a Serre subcategory of ${\rm mod}^L\mbox{-}S$. It follows that
$${\rm Ker}\; F=\langle S/\mathfrak{m}(\vec{y}), S/(x_i)(\vec{x}) \; |\; \vec{y}\in L, 1\leq i\leq t, \vec{x} \mbox{ satisfying that } -\bar{l_i}\notin \pi_i(H)\rangle.$$
Then we are done by using the fact that the essential kernel of ``res" equals $q({\rm Ker}\; F)$, which equals $\mathcal{N}_H$. Here, we recall from \cite[Subsection 1.6]{GL87} the well-known fact that $\mathcal{S}_i(\vec{x})=\mathcal{S}_i(l_i\vec{x}_i)$ for each $1\leq i\leq t$.
\end{proof}

\section{Weighted projective lines of tubular type}

In this section, we study weighted projective lines of tubular type. They are  explicitly related to elliptic plane curves via  two different equivariantizations. We emphasize that the idea goes back to \cite[Example 5.8]{GL87} and more explicitly to \cite{Len, Lentalk}; compare \cite{Po,Hille,Ploog}.

\subsection{}

We recall that a weight sequence $\mathbf{p}=(p_1, p_2, \cdots, p_t)$ is of \emph{tubular type} provided that the dualizing element $\vec{\omega}$ in $L(\mathbf{p})$ is torsion, or equivalently, $\delta(\vec{\omega})=0$.  An elementary calculation yields a complete list of weight sequences of tubular type: $(2,2,2,2)$, $(3,3,3)$, $(4,4,2)$ and $(6,3,2)$. We observe that in each case the order of $\vec{\omega}$ equals $p={\rm lcm}(\mathbf{p})$, which further equals $p_1$.

We have the following easy observation.

\begin{lem}\label{lem:mult}
Let $\mathbf{p}$ be a weight sequence of tubular type and let $n\geq 1$. Recall that $p={\rm lcm}(\mathbf{p})$. Then the following equation holds
$$\sum_{j=0}^{p-1} {\rm mult}(n\vec{x}_1+j\vec{\omega})=n.$$
\end{lem}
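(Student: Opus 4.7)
The plan is to reduce the identity to a direct computation in $L(\mathbf{p})$, leveraging the tubular identity $\sum_{i=1}^{t}\frac{1}{p_i}=t-2$ (equivalent to $\delta(\vec{\omega})=0$) together with the coincidence $p=p_1$. Setting $\vec{x}_j:=n\vec{x}_1+j\vec{\omega}$, the first observation is that $\delta(\vec{x}_j)=n$ for every $j$, because $\delta(\vec{\omega})=0$ and $\delta(\vec{x}_1)=p/p_1=1$. I would then write $\vec{x}_j$ in its normal form $l(j)\vec{c}+\sum_{i=1}^{t}r_i(j)\vec{x}_i$ and compare with the formula $\delta(\vec{x}_j)=l(j)p+\sum_{i}(p/p_i)r_i(j)$, which yields $l(j)=\tfrac{n}{p}-\sum_{i=1}^{t}\tfrac{r_i(j)}{p_i}$.

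The key step is to establish $l(j)\geq-1$, so that the maximum in ${\rm mult}(\vec{x}_j)=\max(l(j)+1,0)$ is never active. From the bounds $0\leq r_i(j)\leq p_i-1$ one gets $\sum_{i}r_i(j)/p_i\leq\sum_{i}(1-1/p_i)=t-(t-2)=2$, using the tubular identity; combined with $n\geq 1$ this forces $l(j)>\tfrac{n}{p}-2\geq-2$, and integrality of $l(j)$ gives $l(j)\geq -1$. Hence ${\rm mult}(\vec{x}_j)=l(j)+1$ with no truncation, i.e.\ ${\rm mult}(\vec{x}_j)=\tfrac{n}{p}+1-\sum_{i}r_i(j)/p_i$.

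To finish I would read off the residues $r_i(j)$ explicitly by substituting $\vec{\omega}=(t-2)\vec{c}-\sum_{i}\vec{x}_i$ into $\vec{x}_j$, obtaining $r_1(j)=(n-j)\bmod p$ and $r_i(j)=(-j)\bmod p_i$ for $i\geq 2$. As $j$ runs over $\{0,\ldots,p-1\}$, the first map is a bijection of that set, whence $\sum_{j}r_1(j)=p(p-1)/2$; for $i\geq 2$ each residue modulo $p_i$ is attained exactly $p/p_i$ times, giving $\sum_{j}r_i(j)=p(p_i-1)/2$. Summing $l(j)+1$ over $j$ therefore produces $n+p-\sum_{i=1}^{t}\tfrac{1}{p_i}\sum_{j}r_i(j)$, and a short rearrangement using $\sum_{i\geq 2}1/p_i=(t-2)-1/p$ (another application of the tubular identity) shows the subtracted quantity equals $p$, so the total collapses to $n$.

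The main obstacle is the lower bound $l(j)\geq -1$ in the second paragraph; it is precisely there that the tubular hypothesis $\sum 1/p_i=t-2$ enters essentially, and without it the coefficient of $\vec{c}$ could drop below $-1$ and the delicate cancellation $n+p-p=n$ would collapse.
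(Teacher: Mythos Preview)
Your argument is correct and takes a genuinely different route from the paper's. The paper establishes the same lower bound $\phi(n\vec{x}_1+j\vec{\omega})\geq -1$ you prove, but uses it only to derive a recursion $f(n+p)=f(n)+p$ (via $p\vec{x}_1=\vec{c}$), and then verifies the base cases $f(1)=1,\dots,f(p)=p$ by hand for each of the four tubular weight sequences. You instead compute the sum directly and uniformly: the degree map pins down $l(j)$ in terms of the residues $r_i(j)$, and the residue sums over $j$ are evaluated by elementary counting. Your approach avoids the case-by-case check entirely and makes transparent exactly where the tubular identity $\sum_i 1/p_i=t-2$ is consumed (once for the bound $l(j)\geq -1$, once in the final cancellation). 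The paper's proof is shorter to write down but leaves the bulk of the work as an unrecorded verification.

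One small slip to fix: your inequality chain reads $l(j)>\tfrac{n}{p}-2\geq-2$, but what your bounds actually give is $l(j)\geq\tfrac{n}{p}-2>-2$ (the first inequality is non-strict from $\sum_i r_i(j)/p_i\leq 2$; the second is strict because $n\geq 1$ forces $n/p>0$). As written, the chain would only yield $l(j)\geq-2$, which is insufficient; with the strictness placed correctly you get $l(j)>-2$, and then integrality gives $l(j)\geq-1$ as you claim.
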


\begin{proof}
We observe that ${\rm mult}(\vec{x}+\vec{c})={\rm mult}(\vec{x})+1$ provided that $\phi(\vec{x})\geq -1$. For a weight sequence $\mathbf{p}$ of tubular type, we recall that $p=p_1$ and thus $p\vec{x}_1=\vec{c}$. We observe that $\phi(n\vec{x}_1+j\vec{\omega})\geq -1$ for each $n\geq 1$ and $0\leq j< p-1$. Denote the left hand side of the equation by $f(n)$. It follows that $f(n+p)=f(n)+p$ for each $n\geq 1$.

Therefore, to show the required equation, it suffices to prove that $f(i)=i$ for $1\leq i\leq p$. These $p$ equations are easy to verify for each of the four cases. Take the case $\mathbf{p}=(6,3,2)$ for example. We have that $f(4)=\sum_{j=0}^{5} {\rm mult}(4\vec{x}_1+j\vec{\omega})=1+0+1+1+1+0=4$ and that $f(5)=\sum_{j=0}^{5} {\rm mult}(5\vec{x}_1+j\vec{\omega})=1+0+1+1+1+1=5$. We omit the details.
\end{proof}

The following consideration is inspired by \cite[Example 5.8]{GL87}. Let $\mathbf{p}$ be a weight sequence of tubular type. We consider the subgroup $H(\mathbf{p})=\mathbb{Z}(3\vec{x}_1)\oplus \mathbb{Z}\vec{\omega}$ of $L(\mathbf{p})$ generated by $3\vec{x}_1$ and $\vec{\omega}$. It inherits the partial order from $L(\mathbf{p})$ such that its positive cone equals $H(\mathbf{p})_+=H(\mathbf{p})\cap L(\mathbf{p})_+$, which is a submonoid of $H(\mathbf{p})$. We consider the subset $3\vec{x}_1+\mathbb{Z}\vec{\omega}=\{3\vec{x}_1+j\vec{\omega}\; |\; j=0, 1,\cdots, p-1\}$ of $H(\mathbf{p})$.  We list in Table 1 all the positive elements in $3\vec{x}_1+\mathbb{Z}\vec{\omega}$ explicitly.

\begin{table}[h]
\caption{The list of generators of $H(\mathbf{p})_+$}
\begin{tabular}{|c|c|c|c|}
  \hline
  the weight sequence $\mathbf{p}$ & \multicolumn{2}{c}{ \quad  the elements in $(3\vec{x}_1+\mathbb{Z}\vec{\omega})\cap H(\mathbf{p})_+$} &  \\
  \hline
  $(2,2,2,2)$ &   \;  $3\vec{x}_1$ \;  & \multicolumn{1}{c}{\; $3\vec{x}_1+\vec{\omega}=\vec{x}_2+\vec{x}_3+\vec{x}_4$}  &\\
  \hline
    $(3,3,3)$ &   \; $3\vec{x}_1$ \;  &  \multicolumn{1}{c}{\; $3\vec{x}_1+2\vec{\omega}=\vec{x}_1+\vec{x}_2+\vec{x}_3$} &   \\
  \hline
    $(4,4,2)$  & \;  $3\vec{x}_1$ \; &   $3\vec{x}_1+2\vec{\omega}=\vec{x}_1+2\vec{x}_2$ & $3\vec{x}_1+3\vec{\omega}=\vec{x}_2+\vec{x}_3$  \\
  \hline
    $(6,3,2)$  & \;  $3\vec{x}_1$ \;  &  $3\vec{x}_1+2\vec{\omega}=\vec{x}_1+\vec{x}_2$&  $3\vec{x}_1+3\vec{\omega}=\vec{x}_3$ \\
  \hline
\end{tabular}
\end{table}

We have  the following elementary observation, for which we include an elementary proof here. For a conceptual reasoning, we refer to Remark \ref{rem:thm}.

\begin{lem}\label{lem:gene}
Let $\mathbf{p}$ be a weight sequence of tubular type. Then we have $$H(\mathbf{p})_+=\mathbb{N}((3\vec{x}_1+\mathbb{Z}\vec{\omega})\cap H(\mathbf{p})_+).$$
In other words, the monoid $H(\mathbf{p})_+$ is generated by the subset $(3\vec{x}_1+\mathbb{Z}\vec{\omega})\cap H(\mathbf{p})_+$.
\end{lem}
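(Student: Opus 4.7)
The plan is to exploit the decomposition $H(\mathbf{p}) = \mathbb{Z}(3\vec{x}_1) \oplus \mathbb{Z}\vec{\omega}$ and argue case by case over the four tubular weight sequences. Under the tubular hypothesis, $\vec{\omega}$ is torsion of order $p = p_1$ in $L(\mathbf{p})$, so each element of $H(\mathbf{p})$ admits a unique expression $\vec{y} = a(3\vec{x}_1) + b\vec{\omega}$ with $a \in \mathbb{Z}$ and $0 \leq b < p$. Applying the degree map yields $\delta(\vec{y}) = 3a$, so $\vec{y} \in L(\mathbf{p})_+$ forces $a \geq 0$. Each tabulated generator has the form $3\vec{x}_1 + j\vec{\omega}$ for a residue $j$ modulo $p$, so a nonnegative integer combination $\sum_j n_j(3\vec{x}_1 + j\vec{\omega})$ produces $\bigl(\sum n_j\bigr)(3\vec{x}_1) + \bigl(\sum n_j j\bigr)\vec{\omega}$. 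Thus the lemma reduces to the purely numerical question: for every $(a, b)$ for which $\vec{y}$ is positive, find nonnegative integers $(n_j)_{j \in J}$, with $J$ indexing Table 1, such that $\sum n_j = a$ and $\sum n_j j \equiv b \pmod p$.

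I would carry out the argument by induction on $a$, using that $3\vec{x}_1$ (with $j = 0$) is always in $J$. For each residue $b$, let $a_b$ be the least $a \geq 0$ making $\vec{y}$ positive; since $L(\mathbf{p})_+$ is closed under addition of $3\vec{x}_1$, the positive elements in the class of $b$ are exactly those with $a \geq a_b$. It therefore suffices to (i) exhibit an explicit nonnegative decomposition of $a_b(3\vec{x}_1) + b\vec{\omega}$ using exactly $a_b$ tabulated generators, and (ii) extend to arbitrary $a \geq a_b$ by adjoining $(a - a_b)$ further copies of $3\vec{x}_1$.

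Part (i) is a finite check across the four tubular types and at most $p \leq 6$ residues each. The nontrivial residues are handled by identities such as $(3\vec{x}_1 + 2\vec{\omega}) + (3\vec{x}_1 + 3\vec{\omega}) = 6\vec{x}_1 + \vec{\omega}$ in type $(4,4,2)$ at $b = 1$ (giving $a_1 = 2$), or $2(3\vec{x}_1 + 2\vec{\omega}) + (3\vec{x}_1 + 3\vec{\omega}) = 9\vec{x}_1 + \vec{\omega}$ in type $(6,3,2)$ at $b = 1$ (giving $a_1 = 3$), each verified directly from the normal form of $\vec{\omega}$ together with $p\vec{\omega} = 0$.

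The main obstacle is the failure of $\phi \colon L(\mathbf{p}) \to \mathbb{Z}$ to be additive: carries into $\mathbb{Z}\vec{c}$ prevent us from reading the positivity of $\vec{y}$ off a single linear inequality in $(a,b)$, which forces the case analysis. Lemma \ref{lem:mult} specialized to $n = 3$ provides a reassuring consistency check, namely $\sum_{j=0}^{p-1} {\rm mult}(3\vec{x}_1 + j\vec{\omega}) = 3$; since each positive summand contributes at least $1$ to this sum, the coset $3\vec{x}_1 + \mathbb{Z}\vec{\omega}$ harbors at most three positive elements in every tubular type, matching Table 1 row by row. A conceptual explanation for why precisely these generators suffice becomes available only after identifying $S_{H(\mathbf{p})}$ with an elliptic plane algebra, as foreshadowed in Remark \ref{rem:thm}.
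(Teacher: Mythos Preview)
Your approach is correct and essentially identical to the paper's: your $a_b$ is exactly the paper's $m(j)$ (the least $m$ with $m(3\vec{x}_1)+j\vec{\omega}\geq\vec{0}$), and both arguments reduce to the finite case-by-case verification that each minimal element $a_b(3\vec{x}_1)+b\vec{\omega}$ lies in the submonoid generated by the tabulated elements, followed by adjoining $(a-a_b)$ copies of $3\vec{x}_1$. Your write-up is more detailed (the explicit identities, the degree-map argument for $a\geq 0$, and the consistency check via Lemma~\ref{lem:mult}), but the skeleton is the same.
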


\begin{proof}
Recall that $p={\rm lcm}(\mathbf{p})=p_1$ equals the order of $\vec{\omega}$. For each $0\leq j\leq p-1$, we define $m(j)$ to be the least natural number $m$ such that $m(3\vec{x}_1)+j\vec{\omega}\geq \vec{0}$. We observe that $m(0)=0$ and $m(j)\geq 1$ for $j\neq 0$.

 We claim that $m(j)(3\vec{x}_1)+j\vec{\omega}$ lies in $\mathbb{N}((3\vec{x}_1+\mathbb{Z}\vec{\omega})\cap H(\mathbf{p})_+)$ for each $0\leq j\leq p-1$. Then we are done. Indeed, for any  positive element $\vec{x}=m(3\vec{x}_1)+j\vec{\omega}$, we have $m\geq m(j)$, and thus $\vec{x}=(m-m(j))(3\vec{x}_1)+(m(j)(3\vec{x}_1)+j\vec{\omega})$, which lies in $\mathbb{N}((3\vec{x}_1+\mathbb{Z}\vec{\omega})\cap H(\mathbf{p})_+)$.

 The claim is easily verified case by case.
\end{proof}

\subsection{}

We recall that a weighted projective line $\mathbb{X}=\mathbb{X}(\mathbf{p}, \lambda)$ over a field $k$ is of \emph{tubular type} provided that the weight sequence $\mathbf{p}$ is of tubular type. Recall that $S=S(\mathbf{p}, \lambda)$ is the homogeneous coordinate algebra of $\mathbb{X}$.

 We fix the notation for $S$ in the four tubular types. For the weight type  $(2,2,2,2)$, there exists a parameter $\lambda \neq 0, 1$ in the presentation of the corresponding homogeneous coordinate algebra $S$. In other three cases, we omit the normalized parameter sequences. Hence, we refer to $(2,2,2,2; \lambda)$, $(3,3,3)$, $(4,4,2)$ and $(6,3,2)$ as the \emph{type} of $S$. We emphasize that $\lambda\in k$, which is not equal to $0$ or $1$.

 We list the homogeneous coordinate algebras $S$ explicitly, according to their types. We define the \emph{associated} $\mathbb{Z}$-graded algebras $R$ of the same type on the right hand side, where all the variables are of degree one.

{\tiny
 \begin{align*}
&S(2,2,2,2; \lambda)=\frac{k[X_1, X_2, X_3, X_4]}{(X_3^2-(X_2^2-X_1^2), X_4^2-(X_2^2-\lambda X_1^2))}, \quad R(2,2,2,2;\lambda)= \frac{k[X,Y, Z]}{(Y^2Z-X(X-Z)(X-\lambda Z))};
 \\ &S(3,3,3)=\frac{k[X_1, X_2, X_3]}{(X_3^3-(X_2^3-X_1^3))},  \hskip 98pt  R(3,3,3)=\frac{k[X, Y, Z]}{(X^3-(Y^2Z-Z^2Y))};\\
&S(4,4,2)=\frac{k[X_1, X_2, X_3]}{(X_3^2-(X_2^4-X_1^4))},  \hskip 98pt  R(4,4,2)=\frac{k[X, Y, Z]}{(Y^2Z-X(X-Z)(X+Z))};\\
 &S(6,3,2)=\frac{k[X_1, X_2, X_3]}{(X_3^2-(X_2^3-X_1^6))},  \hskip 98pt  R(6,3,2)=\frac{k[X, Y, Z]}{(Y^2Z-(X^3-Z^3))}.
\end{align*}
}

We denote by $\mathbb{E}$ the projective plane curve defined by $R$. By Serre's theorem, there is an equivalence between the category ${\rm coh}\mbox{-}\mathbb{E}$ of  coherent sheaves on $\mathbb{E}$ and the quotient category ${\rm qmod}^\mathbb{Z}\mbox{-}R$ of ${\rm mod}^\mathbb{Z}\mbox{-}R$ by the Serre subcategory ${\rm mod}_0^\mathbb{Z}\mbox{-}R$ consisting of finite dimensional modules.

\begin{rem}\label{rem:j}
We mention that $\mathbb{E}(2,2,2,2;\lambda)$ is an elliptic curve for $\lambda \neq 0, 1$.  For other types, we have requirement on the characteristic of the field $k$;  see also \cite{Len}. If ${\rm char}\; k\neq 3$, $\mathbb{E}(3,3,3)$ is an elliptic curve with $j$-invariant $0$; if ${\rm char}\; k\neq 2$, $\mathbb{E}(4,4,2)$ is an elliptic curve with $j$-invariant $1728$; if ${\rm char}\; k\neq 2, 3$, $\mathbb{E}(6,3,2)$ is an elliptic curve with $j$-invariant $0$.
\end{rem}

 We define three homogeneous elements $x$, $y$ and $z$ in $S$ according to Table 2.  By comparing with Table 1, all their degrees lie in $(3\vec{x}_1+\mathbb{Z}\vec{\omega})\cap H(\mathbf{p})_+$. For an explanation of the term ``extra degree"  in Table 2, we refer to Remark \ref{rem:extra}.

\begin{table}[h]
\caption{The elements $x$, $y$ and  $z$ in $S$ and their extra degrees}
\begin{tabular}{|c|c|c|c|c|c|c|c|}
  \hline
  the types of  $S$ and $R$ &  $x$  &  $y$  & $z$  & ${\rm deg}\; x$ & ${\rm deg}\; y$ & ${\rm deg}\; z$ \\
  \hline
  $(2,2,2,2; \lambda)$ &   $x_1x_2^2$  & $x_2x_3x_4$  & \;  $x_1^3$ \; & {\small $ (1, \bar{0})$} &{\small $(1, \bar{1})$} & {\small $(1, \bar{0})$} \\
  \hline
    $(3,3,3)$ &  $x_1x_2x_3$ & $x_2^3$ & \;  $x_1^3$ \; & {\small $(1, \bar{2})$} & {\small $(1, \bar{0})$} & {\small $(1, \bar{0})$}   \\
  \hline
    $(4,4,2)$  & $x_1x_2^2$ &   $x_2x_3$ & \;  $x_1^3$ \; & {\small $(1, \bar{2})$} & {\small $(1, \bar{3})$} & {\small $(1, \bar{0})$} \\
  \hline
    $(6,3,2)$  & $x_1x_2$ & $x_3$ & \;  $x_1^3$ \;  & {\small $(1, \bar{2})$} & {\small $(1, \bar{3})$} & {\small $(1, \bar{0})$} \\
  \hline
\end{tabular}
\end{table}

 We consider the subalgebra $k[x, y, z]$ of $S$ generated by $x$, $y$ and $z$. We observe that $k[x, y, z]\subseteq S_{H(\mathbf{p})}$. Consider the surjective homomorphism $\pi\colon H(\mathbf{p})\rightarrow \mathbb{Z}$ defined by $\pi(3\vec{x}_1)=1$ and $\pi(\vec{\omega})=0$.

 The central technical result of this section is as follows.

 \begin{thm}\label{thm:R-S}
 Let $S$  be the homogeneous coordinate algebra of a weighted projective line $\mathbb{X}$ of tubular type and let $R$ be the associated $\mathbb{Z}$-graded algebra of the same type. Let $H=H(\mathbf{p})$. Keep the above notation. Then we have  $k[x, y, z]=S_H$, and that there is an isomorphism of $\mathbb{Z}$-graded algebras
 $$R \stackrel{\sim}\longrightarrow  \pi_*(S_H).$$
 \end{thm}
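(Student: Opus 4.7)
The plan is to construct an explicit graded ring homomorphism $\bar\phi\colon R\to \pi_*(S_H)$ from the presentation of $R$, and then show it is an isomorphism by matching Hilbert series and proving injectivity; the equality $k[x,y,z]=S_H$ will drop out at the end.

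First, I would read off from Tables 1 and 2 that each of $x,y,z$ has $L(\mathbf{p})$-degree lying in $(3\vec{x}_1+\mathbb{Z}\vec{\omega})\cap H(\mathbf{p})_+$, so $x,y,z\in S_H$ and under the map $\pi\colon H\to\mathbb{Z}$ with $\pi(3\vec{x}_1)=1$, $\pi(\vec{\omega})=0$, each has $\pi$-degree $1$. I would then define a $k$-algebra map $\tilde\phi\colon k[X,Y,Z]\to S$ by $X\mapsto x$, $Y\mapsto y$, $Z\mapsto z$, which is $\mathbb{Z}$-graded into $\pi_*(S_H)$. A direct type-by-type substitution using the defining relations of $S$ shows that the cubic $f$ defining $R$ is killed; for instance in type $(6,3,2)$, using $x_3^2=x_2^3-x_1^6$ one gets
\[
y^2z-x^3+z^3=x_1^3(x_2^3-x_1^6)-x_1^3x_2^3+x_1^9=0,
\]
and the other three types are handled analogously. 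Thus $\tilde\phi$ descends to a graded ring homomorphism $\bar\phi\colon R\to\pi_*(S_H)$.

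Next, I would compare Hilbert series. From the standard short exact sequence $0\to k[X,Y,Z](-3)\xrightarrow{f}k[X,Y,Z]\to R\to 0$, one obtains $\dim_k R_n=3n$ for $n\geq 1$ and $\dim_k R_0=1$. On the other side, since $\ker\pi=\mathbb{Z}\vec{\omega}$ is cyclic of order $p$,
\[
\dim_k(\pi_*(S_H))_n=\sum_{j=0}^{p-1}\mathrm{mult}(3n\vec{x}_1+j\vec{\omega})
\]
by (\ref{equ:mult}), and Lemma~\ref{lem:mult} applied with $3n$ in place of $n$ gives the value $3n$ for $n\geq 1$ (and $1$ for $n=0$). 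Hence the two Hilbert series coincide, so $\bar\phi$ is an isomorphism if and only if it is injective.

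Finally, for injectivity: in each of the four types the defining cubic $f\in k[X,Y,Z]$ is irreducible (Weierstrass-type or a straightforward check), so $R$ is a graded integral domain of Krull dimension $2$, and $\ker\bar\phi$ is a homogeneous prime ideal. If $\ker\bar\phi\neq 0$, then $\bar\phi(R)=k[x,y,z]$ would have Krull dimension at most $1$. I would rule this out by exhibiting two algebraically independent elements among $\{x,y,z\}$: no defining relation of $S$ involves only $x_1$ and $x_2$, so $k[x_1,x_2]\hookrightarrow S$ is a polynomial subring, and in each type $z=x_1^3$ together with one of $x$ or $y$ is a pair of monomials in $k[x_1,x_2]$ whose exponent vectors in $\mathbb{Z}^2$ are linearly independent (e.g.\ $(3,0)$ and $(1,1)$ in type $(6,3,2)$), hence algebraically independent. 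Thus $\mathrm{trdeg}_k\,k[x,y,z]\geq 2$, a contradiction. Therefore $\bar\phi$ is injective, hence an isomorphism; the image $\bar\phi(R)=k[x,y,z]$ then coincides with $\pi_*(S_H)$, which is $S_H$ as an underlying algebra, giving both assertions. The main obstacle is really the type-by-type bookkeeping: verifying the cubic relation in $S$ and producing the algebraic-independence witness in each of the four cases, although neither is conceptually deep.
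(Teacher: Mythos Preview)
Your proof is correct and follows the same overall architecture as the paper's: define the graded homomorphism $R\to\pi_*(S_H)$ on generators, verify the cubic relation type by type, establish injectivity via a Krull-dimension argument, and conclude surjectivity by matching graded dimensions using Lemma~\ref{lem:mult}.

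The only substantive difference is in how you show $k[x,y,z]$ has Krull dimension~$2$. The paper argues that $k[x,y,z]\subseteq S$ is an integral extension (by showing $x_1,x_2$ are integral over $k[x,y,z]$, case by case), so the Krull dimensions agree. You instead exhibit two algebraically independent elements among $x,y,z$ by noting that in each type two of them are monomials in the polynomial subring $k[x_1,x_2]$ with $\mathbb{Z}$-linearly independent exponent vectors. Both arguments are short and case-based; yours is marginally more direct for the purpose at hand, while the paper's integrality statement carries slightly more information about the inclusion $k[x,y,z]\subseteq S$.
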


\begin{proof}
We claim that $k[x, y, z]\subseteq S$ is an integral extension. Recall that an integral extension preserves Krull dimensions and that the Krull dimension of the algebra $S$ is two. Hence the Krull dimension of $k[x, y, z]$ is also two.

 Recall that $S_{\mathbb{Z}\vec{c}}=k[x_1^{p_1}, x_2^{p_2}]$ and that $S$ is a finitely generated $S_{\mathbb{Z}\vec{c}}$-module. Hence $S$ is integral over $S_{\mathbb{Z}\vec{c}}$ and thus over the subalgebra $k[x_1, x_2]$. For the claim, it suffices to show that both $x_1$ and $x_2$ are integral over $k[x, y, z]$. Since $x_1^3=z$, the element $x_1$ is integral over $k[x, y, z]$.

 We consider the element $x_2$ case by case. We take  $\mathbf{p}=(2,2,2,2)$ for example, while other cases are similar and even easier. Observe in Table 2 that  $y^2=x_2^2(x_2^2-x_1^2)(x_2^2-\lambda x_1^2)$. This implies that $x_2$ is integral over $k[x, y, z, x_1]$ and thus over $k[x, y,z]$. We are done with the claim.

 We define a homomorphism $\theta\colon R\rightarrow S$ of algebras by sending $X$ to $x$, $Y$ to $y$ and $Z$ to $z$; see Table 2. The relation of $R$ is satisfied by an elementary calculation. The image of $\theta$ is $k[x, y, z]$. Recall that $R$ is an integral domain of Krull dimension two. Comparing the Krull dimensions of $R$ and the image of $\theta$, we infer that $\theta$ is injective. Consequently, we have an injective homomorphism of $\mathbb{Z}$-graded algebras $\theta \colon R\rightarrow \pi_*(S_H)$. Here, we recall that $k[x, y,z]\subseteq S_H$.

 We claim that the injective homomorphism $\theta \colon R\rightarrow \pi_*(S_{H})$ is an isomorphism. Then the two  required statements follow. Indeed, it suffices to show that for each $n\geq 1$, ${\rm dim}_k\; R_n={\rm dim}_k\; \pi_*(S_H)_n$. Recall that $\pi_*(S_H)_n=\oplus_{j=0}^{p-1} S_{3n\vec{x}_1+j\vec{\omega}}$, and thus has dimension $\sum_{j=0}^{p-1} {\rm mult}(3n\vec{x}_1+j\vec{\omega})$; see (\ref{equ:mult}). By Lemma \ref{lem:mult} we have ${\rm dim}_k\; \pi_*(S_H)_n=3n$. On the other hand, it is well known that ${\rm dim}_k\; R_n=3n$. Then we are done.
\end{proof}

\begin{rem}\label{rem:thm}
Recall that $\vec{x}\in H(\mathbf{p})$ is positive if and only if $(S_{H(\mathbf{p})})_{\vec{x}} =S_{\vec{x}} \neq 0$. We deduce from the equality $k[x, y, z]=S_{H(\mathbf{p})}$ that each $\vec{x}\in H(\mathbf{p})_+$ lies in the submonoid generated by the degrees of $x$, $y$ and $z$. This gives another proof of Lemma \ref{lem:gene}.
\end{rem}

\begin{rem} \label{rem:extra}
We now explain the ``extra degrees" of $x$, $y$ and $z$ in $S_{H(\mathbf{p})}$. We recall that $p={\rm lcm}(\mathbf{p})$ equals the order of $\vec{\omega}$. So we have the following isomorphism of abelian groups
\begin{align}
\psi\colon H(\mathbf{p})=\mathbb{Z}(3\vec{x}_1)\oplus \mathbb{Z}\vec{\omega}\stackrel{\sim}\longrightarrow \mathbb{Z}\times \mathbb{Z}/{p\mathbb{Z}}
\end{align}
such that $\psi(3\vec{x}_1)=(1, \bar{0})$ and $\psi(\vec{\omega})=(0, \bar{1})$. By this isomorphism, the $H(\mathbf{p})$-graded algebra  $S_{H(\mathbf{p})}$ is  $\mathbb{Z}\times \mathbb{Z}/p\mathbb{Z}$-graded. This yields the \emph{extra degrees} of $x$, $y$ and $z$, which are computed by applying $\psi$ to the degrees of $x$, $y$ and $z$; compare Table 1.

We observe that the algebras $R$ are naturally $\mathbb{Z}\times \mathbb{Z}/{p\mathbb{Z}}$-graded according to the degrees in Table 2; here, we abuse $X$, $Y$, $Z$ with $x$, $y$ and $z$, respectively. We denote the resulting $\mathbb{Z}\times \mathbb{Z}/{p\mathbb{Z}}$-graded algebras by $\bar{R}$; it is a $\mathbb{Z}/{p\mathbb{Z}}$-refinement of $R$.

Then the isomorphism $\theta$ in the above proof, which is given according to Table 2, yields  an isomorphism of $\mathbb{Z}\times \mathbb{Z}/{p\mathbb{Z}}$-algebras
\begin{align}\label{equ:iso1}
\theta\colon \bar{R}\stackrel{\sim}\longrightarrow S_{H(\mathbf{p})}.
\end{align}
\end{rem}

\vskip 10pt

Let $\mathbf{p}$ be a weight sequence of tubular type. Recall that $p={\rm lcm}(\mathbf{p})$. Denote by $C_p=\langle g\; |\; g^p=1\rangle $ the cyclic group of order $p$.

We assume that $k$ is a splitting field of $C_p$.  Equivalently, $p$ is invertible in $k$ which contains a primitive $p$-th root of unity. Fix $\omega\in k$ a primitive $p$-th root of unity. For example, if $\mathbf{p}=(2,2,2,2)$, then the characteristic of $k$ is not equal to $2$ and $\omega=-1$.

We identify $C_p$ with the character group $\widehat{\mathbb{Z}/{p\mathbb{Z}}}$ of $\mathbb{Z}/{p\mathbb{Z}}$  such that $g(\bar{1})=\omega$. Applying the correspondence (\ref{equ:corres2}) to the above $\mathbb{Z}/{p\mathbb{Z}}$-refinement $\bar{R}$ of the $\mathbb{Z}$-graded algebra $R$, we obtain in Table 3 the corresponding $C_p$-action on $R$ as $\mathbb{Z}$-graded algebra automorphisms.

\begin{table}[h]
\caption{The cyclic group action on $R$}
\begin{tabular}{|c|c|c|}
  \hline
  the types of  $R$ & the action of $C_p=\langle g\; |\; g^p=1\rangle $ on generators & the order of $\omega\in k$ \\
  \hline
  $(2,2,2,2; \lambda)$ &   $g.x=x, \; g.y=\omega y,\;  g.z=z$ & 2\\
  \hline
    $(3,3,3)$ &  $g.x=\omega^2 x, \; g.y=y, \; g.z=z$ & 3\\
  \hline
    $(4,4,2)$  & $g.x=\omega^2 x, \;g.y=\omega^3 y, \; g.z=z$ & 4\\
    \hline
    $(6,3,2)$  &  $g.x=\omega^2 x,\;  g.y=\omega^3 y, \; g.z=z$ &  6\\
    \hline
\end{tabular}
\end{table}

We mention that the  cyclic  group action on $R$ corresponds to a ramified Galois covering from the elliptic curve $\mathbb{E}$ to the projective line $\mathbb{P}_{k}^1$, where the Galois group is isomorphic to the acting cyclic group; compare \cite[Subsection 1.3]{Po}.

\subsection{} We will formulate the main result of this paper. As mentioned earlier, the result is implicitly contained in \cite[Example 5.8]{GL87} and later explicitly in \cite{Len} with more details; see also \cite{Lentalk}. The treatment here is slightly different from \cite{Len, Lentalk}, and we strengthen slightly the results therein; compare Remark \ref{rem:main}(3). We mention that related results appear in \cite{Hille}. 

Let $k$ be an arbitrary field. Let $S$ be the homogeneous coordinate algebra of a weighted projective line $\mathbb{X}$ of tubular type, and $R$ be the associated $\mathbb{Z}$-graded algebra of the same type. Let $\mathbf{p}$ be the weight sequence and denote $p={\rm lcm}(\mathbf{p})$. Recall that the dualizing element $\vec{\omega}$ in $L=L(\mathbf{p})$ has order $p$. The subgroup $\mathbb{Z}\vec{\omega}$ acts on ${\rm mod}^L\mbox{-}S$ by the degree-shift action, which induces the \emph{degree-shift action} of $\mathbb{Z}\vec{\omega}$ on ${\rm qmod}^L\mbox{-}S$. We consider the category $({\rm qmod}^L\mbox{-}S)^{\mathbb{Z}\vec{\omega}}$ of equivariant objects.

Assume that $k$ is a splitting field of the cyclic group $C_p$ of order $p$. Consider the $C_p$-action on $R$ as $\mathbb{Z}$-graded algebra automorphisms in Table 3. Such an action induces the twisting action of $C_p$ on ${\rm mod}^\mathbb{Z}\mbox{-}R$, which further induces the \emph{twisting action} of $C_p$ on ${\rm qmod}^\mathbb{Z}\mbox{-}R$. We consider the category $({\rm qmod}^\mathbb{Z}\mbox{-}R)^{C_p}$ of equivariant objects.

\begin{thm}{}\rm (Lenzing-Meltzer)\label{thm:main}
Let $k$ be a field. Let $S$ be the homogeneous coordinate algebra of a weighted projective line $\mathbb{X}$ of tubular type, and let $R$ be the associated $\mathbb{Z}$-graded algebra of the same type, where we keep the notation as above. Then the following statements hold.
\begin{enumerate}
\item There is an equivalence of categories
$$({\rm qmod}^L\mbox{-}S)^{\mathbb{Z}\vec{\omega}} \stackrel{\sim}\longrightarrow {\rm qmod}^\mathbb{Z}\mbox{-}R.$$
\item Assume that $k$ is a splitting field of $C_p$. Then there is an equivalence of categories
$${\rm qmod}^L\mbox{-}S\stackrel{\sim}\longrightarrow ({\rm qmod}^\mathbb{Z}\mbox{-}R)^{C_p}.$$
\end{enumerate}
\end{thm}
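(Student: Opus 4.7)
The plan is to deduce both equivalences from the two general principles already established in Sections 4--6 together with the identification $R \cong \pi_*(S_{H(\mathbf{p})})$ from Theorem \ref{thm:R-S}. First I would record the crucial geometric input: the subgroup $H=H(\mathbf{p})=\mathbb{Z}(3\vec{x}_1)\oplus\mathbb{Z}\vec{\omega}$ is an effective subgroup of $L=L(\mathbf{p})$ in the sense of Definition \ref{defn:1}. Indeed, from the normal form $\vec{\omega}=-2\vec{c}+\sum_{i=1}^t(p_i-1)\vec{x}_i$ one reads off $\pi_i(\vec{\omega})=\overline{-1}$, which generates $\mathbb{Z}/p_i\mathbb{Z}$. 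Hence by Proposition \ref{prop:eff} the restriction functor yields an equivalence ${\rm qmod}^L\mbox{-}S\simeq{\rm qmod}^H\mbox{-}S_H$. Because the degree-shift by $\vec{\omega}\in H$ commutes with restriction, this equivalence is $\mathbb{Z}\vec{\omega}$-equivariant, so equivariantizing gives $({\rm qmod}^L\mbox{-}S)^{\mathbb{Z}\vec{\omega}}\simeq({\rm qmod}^H\mbox{-}S_H)^{\mathbb{Z}\vec{\omega}}$.

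For part (1), I would then apply Proposition \ref{prop:app2} to the surjection $\pi\colon H\to\mathbb{Z}$ whose kernel is $\mathbb{Z}\vec{\omega}$: this produces the equivalence ${\rm mod}^\mathbb{Z}\mbox{-}\pi_*(S_H)\stackrel{\sim}\longrightarrow({\rm mod}^H\mbox{-}S_H)^{\mathbb{Z}\vec{\omega}}$. Theorem \ref{thm:R-S} identifies $\pi_*(S_H)$ with $R$, upgrading this to an equivalence ${\rm mod}^\mathbb{Z}\mbox{-}R\stackrel{\sim}\longrightarrow({\rm mod}^H\mbox{-}S_H)^{\mathbb{Z}\vec{\omega}}$. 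To descend to the quotients I invoke Corollary \ref{cor:equiv} with the $\mathbb{Z}\vec{\omega}$-invariant Serre subcategory ${\rm mod}_0^H\mbox{-}S_H$, producing
\[
({\rm mod}^H\mbox{-}S_H)^{\mathbb{Z}\vec{\omega}}/({\rm mod}_0^H\mbox{-}S_H)^{\mathbb{Z}\vec{\omega}}\simeq({\rm qmod}^H\mbox{-}S_H)^{\mathbb{Z}\vec{\omega}}.
\]
The left-hand side is visibly ${\rm qmod}^\mathbb{Z}\mbox{-}R$ under the equivalence above (one checks that the Serre subcategory corresponds to ${\rm mod}_0^\mathbb{Z}\mbox{-}R$ since $\pi_*$ preserves finite-dimensionality). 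Combining with the equivariant equivalence from the first paragraph yields (1).

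For part (2), I would use the $\mathbb{Z}/p\mathbb{Z}$-refinement $\bar{R}$ of the $\mathbb{Z}$-graded algebra $R$ from Remark \ref{rem:extra}, whose associated $C_p=\widehat{\mathbb{Z}/p\mathbb{Z}}$-action on $R$ by graded algebra automorphisms is precisely the one tabulated in Table 3. Proposition \ref{prop:recover} then gives an isomorphism of categories ${\rm mod}^{\mathbb{Z}\times\mathbb{Z}/p\mathbb{Z}}\mbox{-}\bar{R}\stackrel{\sim}\longrightarrow({\rm mod}^\mathbb{Z}\mbox{-}R)^{C_p}$. Via the isomorphism $\theta\colon\bar{R}\stackrel{\sim}\longrightarrow S_{H}$ of $\mathbb{Z}\times\mathbb{Z}/p\mathbb{Z}$-graded algebras in (\ref{equ:iso1}) (using $\psi\colon H\cong\mathbb{Z}\times\mathbb{Z}/p\mathbb{Z}$), the source is identified with ${\rm mod}^H\mbox{-}S_H$. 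Hence ${\rm mod}^H\mbox{-}S_H\simeq({\rm mod}^\mathbb{Z}\mbox{-}R)^{C_p}$, and applying Corollary \ref{cor:equiv} to $({\rm mod}_0^\mathbb{Z}\mbox{-}R)\subseteq{\rm mod}^\mathbb{Z}\mbox{-}R$ (which is $C_p$-invariant, twisting preserving dimension) descends this to ${\rm qmod}^H\mbox{-}S_H\simeq({\rm qmod}^\mathbb{Z}\mbox{-}R)^{C_p}$. Composing with the equivalence ${\rm qmod}^L\mbox{-}S\simeq{\rm qmod}^H\mbox{-}S_H$ from effectiveness finishes (2).

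The main obstacle I anticipate is bookkeeping rather than substance: one must verify at each stage that the two Serre subcategories of ``finite dimensional'' modules correspond to each other under the successive equivalences so that the hypotheses of Corollary \ref{cor:equiv} apply, and that the relevant group actions are compatibly transported. For (1) the matching is clean because $\pi_*$ and its adjoint both preserve finite dimension over $k$. For (2) the issue is that passing from ${\rm qmod}^L\mbox{-}S$ to ${\rm qmod}^H\mbox{-}S_H$ involves killing the extra Serre subcategory $\mathcal{N}_H$ of Proposition \ref{prop:eff}, so I would explicitly check that $\mathcal{N}_H=0$ precisely because $H$ is effective, which is already built into that proposition. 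A secondary check is that the twisting action given in Table 3 really agrees, character-by-character, with the $C_p$-action coming from the $\mathbb{Z}/p\mathbb{Z}$-refinement via the correspondence (\ref{equ:corres2}); this is what Table 2 and Table 3 together record, so it is only a matter of citing Lemma \ref{lem:corres}.
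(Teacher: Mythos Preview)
Your proposal is correct and follows essentially the same route as the paper's proof: establish effectiveness of $H(\mathbf{p})$, invoke Proposition \ref{prop:eff}, then combine Theorem \ref{thm:R-S} with Proposition \ref{prop:app2} for part (1) and Proposition \ref{prop:recover} for part (2), descending to quotients via Corollary \ref{cor:equiv}. The only cosmetic difference is that the paper first transports everything to $(\mathbb{Z}\times\mathbb{Z}/p\mathbb{Z})$-graded $\bar{R}$-modules via the isomorphism (\ref{equ:iso1}) before applying Propositions \ref{prop:app2} and \ref{prop:recover}, whereas you work directly with $H$-graded $S_H$-modules; these are identified by $\psi$ and $\theta$, so the arguments coincide.
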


\begin{proof}
Recall the subgroup  $H=H(\mathbf{p})=\mathbb{Z}(3\vec{x}_1)\oplus \mathbb{Z}\vec{\omega}$ of $L$ from Subsection 7.1; it is an effective subgroup in the sense of Definition \ref{defn:1}. By Proposition \ref{prop:eff} the restriction functor is an equivalence $${\rm res}\colon {\rm qmod}^L\mbox{-}S \stackrel{\sim}\longrightarrow {\rm qmod}^H\mbox{-}S_H.$$

Recall from Remark \ref{rem:extra} the $\mathbb{Z}/{p\mathbb{Z}}$-refinement $\bar{R}$ of the $\mathbb{Z}$-graded algebra $R$. By the isomorphism $\theta$ in (\ref{equ:iso1}) we have an isomorphism of categories $$\theta^*\colon {\rm qmod}^H\mbox{-}S_H\stackrel{\sim}\longrightarrow {\rm qmod}^{(\mathbb{Z}\times \mathbb{Z}/{p\mathbb{Z}})}\mbox{-}\bar{R}.$$

 Consider the following composite equivalence
\begin{align}\label{equ:comp}
\theta^*\circ {\rm res}\colon  {\rm qmod}^L\mbox{-}S \stackrel{\sim}\longrightarrow {\rm qmod}^{(\mathbb{Z}\times \mathbb{Z}/{p\mathbb{Z}})}\mbox{-}\bar{R}.\end{align}
We observe that this equivalence transfers the degree-shift action of  $\mathbb{Z}\vec{\omega}$ on ${\rm qmod}^L\mbox{-}S$ to the degree-shift action of   $\mathbb{Z}/{p\mathbb{Z}}$ on ${\rm qmod}^{(\mathbb{Z}\times \mathbb{Z}/{p\mathbb{Z}})}\mbox{-}\bar{R}$. Consequently, this equivalence induces an equivalence of categories
\begin{align}\label{equ:ZZ}
({\rm qmod}^L\mbox{-}S)^{\mathbb{Z}\vec{\omega}} \stackrel{\sim}\longrightarrow ({\rm qmod}^{(\mathbb{Z}\times \mathbb{Z}/{p\mathbb{Z}})}\mbox{-}\bar{R})^{\mathbb{Z}/{p\mathbb{Z}}}.
\end{align}

By Proposition \ref{prop:app2} we have an equivalence of categories
$$({\rm mod}^{(\mathbb{Z}\times \mathbb{Z}/{p\mathbb{Z}})}\mbox{-}\bar{R})^{\mathbb{Z}/{p\mathbb{Z}}} \stackrel{\sim}\longrightarrow {\rm mod}^\mathbb{Z} \mbox{-}R,$$
which restricts to an equivalence
$({\rm mod}_0^{(\mathbb{Z}\times \mathbb{Z}/{p\mathbb{Z}})}\mbox{-}\bar{R})^{\mathbb{Z}/{p\mathbb{Z}}} \stackrel{\sim}\longrightarrow {\rm mod}_0^\mathbb{Z} \mbox{-}R$
of full subcategories consisting of finite dimensional modules. Applying Corollary \ref{cor:equiv}, we have an equivalence of categories
$$({\rm qmod}^{(\mathbb{Z}\times \mathbb{Z}/{p\mathbb{Z}})}\mbox{-}\bar{R})^{\mathbb{Z}/{p\mathbb{Z}}}\stackrel{\sim}\longrightarrow {\rm qmod}^\mathbb{Z} \mbox{-}R.$$
We combine this equivalence with (\ref{equ:ZZ}) to obtain (1).

For (2), we assume that $k$ is a splitting field of $C_p$. Recall that $C_p$ is identified with the character group of $\mathbb{Z}/{p\mathbb{Z}}$. By Proposition \ref{prop:recover} we have an isomorphism of categories $${\rm mod}^{(\mathbb{Z}\times \mathbb{Z}/{p\mathbb{Z}})}\mbox{-}\bar{R}\stackrel{\sim}\longrightarrow ({\rm mod}^\mathbb{Z}\mbox{-}R)^{C_p},$$
which restricts to an isomorphism  ${\rm mod}_0^{(\mathbb{Z}\times \mathbb{Z}/{p\mathbb{Z}})}\mbox{-}\bar{R}\stackrel{\sim}\longrightarrow ({\rm mod}_0^\mathbb{Z}\mbox{-}R)^{C_p}$  of full subcategories consisting of finite dimensional modules. Applying Corollary \ref{cor:equiv}, we have an equivalence of categories
$${\rm qmod}^{(\mathbb{Z}\times \mathbb{Z}/{p\mathbb{Z}})}\mbox{-}\bar{R}\stackrel{\sim}\longrightarrow ({\rm qmod}^\mathbb{Z}\mbox{-}R)^{C_p}.$$
We combine this equivalence with (\ref{equ:comp}) to obtain (2).
\end{proof}

\begin{rem}\label{rem:main}
(1) We recall that $\mathbb{X}$ is the weighted projective line defined by $S$ and that $\mathbb{E}$ is the projective plane curve defined by $R$. If $k$ is a splitting field of $C_p$, then $\mathbb{E}$ is an elliptic plane curve. Then Theorem \ref{thm:main} relates the categories of coherent sheaves on $\mathbb{X}$ and $\mathbb{E}$ via two equivariantizations.

We emphasize that the two group actions on relevant categories are completely different: one is the degree-shift action and another is the twisting action. However, as in Remark \ref{rem:Len1} the two equivalences obtained in Theorem \ref{thm:main} are somehow \emph{adjoint} to each other. We mention that by Remark \ref{rem:j} the assignment $\mathbb{X}\mapsto\mathbb{E}$ is not a bijection up to isomorphism; see also \cite{Lentalk}.

(2) For the $L$-graded algebra $S$ and the associated $\mathbb{Z}$-graded $R$ one might consider the quotient categories ``${\rm QMod}$" of arbitrary graded modules, which are equivalent to the categories of quasi-coherent sheaves on $\mathbb{X}$ and $\mathbb{E}$, respectively. Then a similar result as Theorem \ref{thm:main} holds for theses two quotient categories ``${\rm QMod}$".

(3) Let us mention the version of Theorem \ref{thm:main} in \cite{Lentalk}.

 Consider the degree map $\delta\colon L\rightarrow \mathbb{Z}$ whose kernel equals the torsion subgroup ${\rm tor}(L)$ of $L$; moreover, we have a decomposition $L={\rm tor}(L)\oplus \mathbb{Z}\vec{x}_1$. Consider the $\mathbb{Z}$-graded algebra $\underline{S}=\delta_*(S)$; see Subsection 5.1. We view the $L$-graded algebra $S$ as a ${\rm tor}(L)$-refinement of $\underline{S}$. Observe that $\mathbb{Z}\vec{\omega}\subseteq {\rm tor}(L)$.

 If the field $k$ is a splitting field of $C_p$, then it is also a splitting field of ${\rm tor}(L)$. In this case, we denote by $A$ the character group of ${\rm tor}(L)$.  Then $A$ acts on $\underline{S}$ as $\mathbb{Z}$-graded algebra automorphisms by Lemma \ref{lem:corres}. We consider the \emph{degree-shift action} of ${\rm tor}(L)$ on ${\rm qmod}^L\mbox{-}S$, and the \emph{twisting action} of $A$ on ${\rm qmod}^\mathbb{Z}\mbox{-}\underline{S}$. Then under the same assumptions as Theorem \ref{thm:main}, the same argument yields the following two adjoint equivalences:
  \begin{align}\label{equ:LM}
  ({\rm qmod}^L\mbox{-}S)^{{\rm tor}(L)} \stackrel{\sim}\longrightarrow {\rm qmod}^\mathbb{Z}\mbox{-}\underline{S}, \quad {\rm qmod}^L\mbox{-}S\stackrel{\sim}\longrightarrow ({\rm qmod}^\mathbb{Z}\mbox{-}\underline{S})^{A}.\end{align}
Indeed, the argument here is easier since it does not rely on Proposition \ref{prop:eff} and Theorem \ref{thm:R-S}.

The major difference between the  two equivalences (\ref{equ:LM}) and the ones in Theorem \ref{thm:main} is the fact that  the acting groups in (\ref{equ:LM}) are not cyclic except the case $\mathbf{p}=(6,3,2)$, while all the  acting groups in Theorem \ref{thm:main} are cyclic.

We mention that for the cases $\mathbf{p}=(4,4,2)$ and $(6,3,2)$, the variables of the $\mathbb{Z}$-graded algebra $\underline{S}$ are not all of degree one. Therefore, to verify that $ {\rm qmod}^\mathbb{Z}\mbox{-}\underline{S}$ is equivalent to the category of coherent sheaves on an elliptic curve, one might need the general fact on Veronese subalgebras \cite[Proposition 5.10]{AZ}, or the axiomatic description of the category of coherent sheaves on an elliptic curve in \cite[Theorem C]{RV}. If the characteristic of $k$ is not equal to $2$, the $\mathbb{Z}$-graded algebra $\underline{S}$ for the case $\mathbf{p}=(2,2,2,2)$ defines an elliptic space curve in its Jacobi form. If the characteristic of $k$ is not equal to $3$, the $\mathbb{Z}$-graded algebra $\underline{S}$ for the case $\mathbf{p}=(3,3,3)$ defines an elliptic curve as a Fermat curve of degree $3$.
\end{rem}

\vskip 15pt

\noindent {\bf Acknowledgements}\; J. Chen is very grateful to Helmut Lenzing for giving her the manuscript \cite{Len} during his visit at Xiamen University in 2011.  The authors thank Helmut Lenzing very much for enlightening and helpful discussions. The authors are grateful to Henning Krause and Hagen Meltzer for helpful comments.

J. Chen and Z. Zhou are supported by National Natural Science Foundation of China (No. 11201386) and Natural Science Foundation of Fujian Province (No. 2012J05009); X.W. Chen is supported by NCET-12-0507, National Natural Science Foundation of China (No. 11201446) and the Alexander von Humboldt Stiftung.

\bibliography{}

\vskip 15pt

 \noindent {\tiny

 \vskip 5pt

\noindent Jianmin Chen, and Zhenqiang Zhou\\
School of Mathematical Sciences, \\
Xiamen University, Xiamen, 361005, Fujian, PR China.\\
E-mail: chenjianmin@xmu.edu.cn, zhouzhenqiang2005@163.com.\\}
\vskip 3pt

{\tiny \noindent   Xiao-Wu Chen \\
 School of Mathematical Sciences,\\
  University of Science and Technology of China, Hefei 230026, Anhui, PR China \\
  Wu Wen-Tsun Key Laboratory of Mathematics,\\
 USTC, Chinese Academy of Sciences, Hefei 230026, Anhui, PR China.\\
E-mail: xwchen@mail.ustc.edu.cn, URL: http://home.ustc.edu.cn/$^\sim$xwchen.}

\end{document}